\documentclass[11pt,fleqn]{amsart}
\usepackage[paper=a4paper]
  {geometry}

\pagestyle{plain}
\pagenumbering{arabic}
\usepackage[utf8]{inputenc}
\usepackage[english]{babel}

\linespread{1.08}

\usepackage{mathabx}
\usepackage{amssymb}
\usepackage[alphabetic,initials]{amsrefs}
\usepackage[osf,noBBpl]{mathpazo}
\usepackage[poly,arrow,curve,matrix]{xy}
\usepackage[shortlabels]{enumitem}
\usepackage{stmaryrd}

\setlist[enumerate,1]{label = \emph{(\roman*)},
  ref = \emph{\theenumii.\roman*}}
\usepackage{tabularx} 
\usepackage{tikz}
\usepackage{mathdots}
\usetikzlibrary{calc,arrows.meta,backgrounds,decorations.pathmorphing,cd,positioning}
\usepackage{ytableau}



\newcommand\CC{\mathbb C}

\newcommand\ZZ{\mathbb Z}
\newcommand\II{\mathbb I}

\newcommand\JJ{\mathbb J}
\newcommand\LL{\mathbb L}
\renewcommand\ll{{\boldsymbol \ell}}

\renewcommand\to{\longrightarrow}
\renewcommand\phi{\varphi}
\renewcommand\epsilon{\varepsilon}

\newtheorem{thm}{Theorem}[section]
\newtheorem{lem}[thm]{Lemma}
\newtheorem{prop}[thm]{Proposition}
\newtheorem{cor}[thm]{Corollary}

\newtheorem*{thm*}{Theorem}
\newtheorem*{cor*}{Corollary}
\theoremstyle{definition}
\newtheorem{defn}[thm]{Definition}
\newtheorem*{defn*}{Definition}
\theoremstyle{remark}

\theoremstyle{remark}

\newcommand\VV{\mathbb V}

\newcommand\gl{\mathfrak{gl}}
\renewcommand\sl{\mathfrak{sl}}

\renewcommand\SS{\mathbf S}

\newcommand\Id{\mathsf{Id}}

\newcommand\interval[1]{\llbracket{#1}\rrbracket}
\newcommand\lie[1]{\mathfrak{#1}}
\newcommand\op{\mathsf{op}}
\newcommand\wt[1]{\boldsymbol{#1}}

\DeclareMathOperator\soc{soc}
\DeclareMathOperator\rad{rad}
\DeclareMathOperator\supp{supp}
\newcommand\vspan[1]{\operatorname{span}{\{#1\}}}
\newcommand\Pieri[2]{\operatorname{Pieri}{(#1},{#2)}}
\newcommand\map[4]{\phi(#3,#4,#1,#2)}

\title{The Pieri Rule at Infinity}

\date{}

\author{Ivan Penkov}
\address{Ivan Penkov. Constructor University, 28759 Bremen, Germany}
\email{ipenkov@constructor.university}

\author{Pablo Zadunaisky}
\address{Pablo Zadunaisky. Constructor University, 28759 Bremen, Germany and IMAS -- CONICET, Buenos Aires, Argentina.}
\email{pzadunaisky@conicet.gov.ar}

\begin{document}

\begin{abstract}
  We study the structure of tensor products of $\gl(\infty) = \varinjlim 
  \gl(n)$-modules $\mathbf L(\wt\lambda) \otimes \mathbf F$ where 
  $\mathbf L(\wt\lambda)$ is a simple integrable highest weight module and 
  $\mathbf F$ is a simple integrable weight multiplicity-free module. Both 
  $\mathbf L(\wt\lambda)$ and $\mathbf F$ are infinite dimensional, in 
  particular 
  $\mathbf F$ can be a Fock module. Similar tensor products of $\gl(n)$-modules 
  are semisimple and their simple constituents are described by the classical 
  Pieri rule. We prove that a $\gl(\infty)$-module $\mathbf M:= \mathbf 
  L(\wt\lambda) \otimes \mathbf F$ is semisimple only in relatively trivial 
  cases, and is indecomposable otherwise. Our main results are a description 
  of the simple constituents of $\mathbf M$, and the construction of a linkage 
  filtration on $\mathbf M$ that provides information on when two simple 
  constituents of $\mathbf M$ are linked. Using the linkage filtration, we 
  compute the socle and radical filtrations of $\mathbf M$, and determine when 
  $\mathbf M$ is rigid.

  \textbf{Keywords:} Pieri rule, $\gl(\infty)$-module, socle and radical filtration, indecomposable module. \textbf{MSC 2020 Number:} 17B10, 17B65
\end{abstract}

\maketitle

\section{Introduction}

The Pieri rule is a cornerstone of the representation theory of the Lie algebra 
$\gl(n)$. It is fudamental for the study of the category of finite-dimensional 
$\gl(n)$-modules as a tensor category. In the simplest case of a tensor product 
$L(\wt\lambda) \otimes V$, where $L(\wt\lambda)$ is a simple $\gl(n)$-module 
with integral highest weight $\wt\lambda$ and $V$ is the defining 
$\gl(n)$-module, the Pieri rule states that $L(\wt\lambda) \otimes V$ 
decomposes as $\bigoplus_{i} L(\wt\lambda + \epsilon_i)$ where $\epsilon_j$ 
are the weights of $V$ and the sum runs over all $i$ such that $\wt\lambda + 
\epsilon_i$ is a dominant weight.

Our aim in this paper is to understand an analogue of Pieri's rule for the 
infinite-dimensional Lie algebra $\gl(\infty) := \varinjlim \gl(n)$. We consider
a general tensor product $\mathbf L(\wt\lambda) \otimes \mathbf F$, where 
$\mathbf L(\wt\lambda)$ is any simple integrable highest weight 
$\gl(\infty)$-module with highest weight $\wt\lambda$  (with respect to
some splitting Borel subalgebra of $\gl(\infty)$) and $\mathbf F$ is any 
simple weight multiplicity-free $\gl(\infty)$-module admiting a highest weight. 
Both $\mathbf F$ and $\mathbf L(\wt\lambda)$ are infinite dimensional.

Since the category of integrable finite-dimensional $\gl(n)$-modules is 
semisimple according to a celebrated theorem of H. Weyl from 1925, the 
classical Pieri rule is usually considered as a combinatorial rule in the 
language of Young diagrams. However, in the infinite-dimensional setting we 
consider, a priori there is no reason for the tensor products $\mathbf 
L(\wt\lambda) \otimes \mathbf F$ to be semisimple, even when $\mathbf F$ is the 
defining representation $\VV$ of $\gl(\infty)$. Moreover, we show that in most 
interesting cases modules of the form $\mathbf L(\wt\lambda) \otimes \mathbf 
F$, or even $\mathbf L(\wt\lambda) \otimes \VV$, are actually indecomposable. 
In the particular case when $\mathbf L(\wt\lambda)$ is a tensor module (see 
\cites{PS11b}) and $\mathbf F = \SS^d\VV$, this has been proved earlier by A. 
Chirvasitu as part of an unpublished study undertaken jointly with the 
first author. 

As already mentioned, in the current paper we take a much broader approach by 
letting $\mathbf L(\wt\lambda)$ be any simple integrable highest weight 
$\gl(\infty)$-module and $\mathbf F$ be any simple highest weight 
multiplicity-free $\gl(\infty)$-module. The classification of simple weight 
multiplicity-free $\sl(\infty)$-modules in \cite{GP20} implies that, up to 
tensor multiplication with a one-dimensional module, a simple highest weight 
multiplicity-free $\gl(\infty)$-module is a symmetric or exterior power of 
one of the defining representations $\VV$ and $\VV_*$, or is a Fock module. The 
latter are spaces of semi-infinite wedge vectors endowed with natural 
$\gl(\infty)$-actions. 

To study the modules $\mathbf L(\wt\lambda) \otimes \mathbf F$ we proceed as 
follows. Suppose we are given a sequence of finite-dimensional Lie subalgebras 
$\lie g_n \subset \gl(\infty)$ such that $\gl(\infty) = \bigcup_{n \geq 1} \lie 
g_n$. Then there exist simple finite-dimensional $\lie g_n$-modules $L_n$ and 
$F_n$, with $\lie g_n$-linear maps $L_n \to L_{n+1}$ and $F_n \to F_{n+1}$
such that $\varinjlim L_n \cong \mathbf L(\wt\lambda)$ and $\varinjlim F_n = 
\mathbf F$, so $\mathbf L(\wt\lambda) \otimes \mathbf F = \varinjlim L_n 
\otimes F_n$. The maps $L_n \to L_{n+1}$ and $F_n \to F_{n+1}$ are easy to 
describe since the source is a simple $\lie g_n$-module and the map is unique
up to a constant. Understanding the tensor product of the maps is harder. Our 
strategy is to single out one particular exhaustion of $\mathbf L(\wt\lambda)
\otimes \mathbf F$ making a complete understanding of the tensor product 
maps possible. 

Our main results are a characterization of the simple constituents of $\mathbf
L(\wt\lambda) \otimes \mathbf F$ and the construction of a natural filtration 
which we call the linkage filtration. This filtration shows when two simple 
constituents are linked and enables us to compute the socle and 
radical filtration of $\mathbf L(\wt\lambda) \otimes \mathbf F$. In particular 
we give a necessary and sufficient condition for the socle and radical 
filtrations to be finite, as well for both filtrations to coincide, i.e., for 
the module to be rigid. We provide also 
several examples showing that the type and length of the linkage filtration can 
vary significantly.

The paper is structured as follows. In section $2$ we recall some necessary 
general background on the representation theory of $\gl(\infty)$. In section
$3$ we introduce some basic definitions, state our main results without proof, 
and give various examples of how they apply. Section $4$ focuses on the 
transition maps $L_n \otimes F_n \to L_{n+1} \otimes F_{n+1}$ and shows that 
under certain hypotheses we can describe them completely. In section $5$ we 
introduce exhaustions of $\gl(\infty)$ which satisfy one of the hypotheses of 
the previous section at each step. Finally, the proofs of the main results are 
given in section $6$.

As an outlook we would like to mention that the results of this paper should be 
instrumental in further studies of various tensor categories of integrable 
$\gl(\infty)$-modules. This applies in particular to the integrable part of the
category $\mathcal O_{LA}^{\lie l}$ introduced in \cite{Zadunaisky22}, in which 
certain indecomposable injective modules are of Pieri type.

\textbf{Acknowledgement:} The work of both authors was supported in part by  
DFG grant PE 980/9-1. PZ is a CONICET researcher.

\section{Notation and Background}

\subsection{Conventions on ordered sets}
Let $(P, \preceq)$ be a partially ordered set (poset). Given $p,q\in P$ we 
denote by $\interval{p,q}$ the set $\{r \in P \mid p \preceq r \preceq q\}$. 
The set $P$ is \emph{locally finite} if for every $p,q \in P$ the interval 
$\interval{p,q}$ is a finite set. We say that $P$ is $\emph{ranked}$ if there 
is a function $f: P \to \ZZ$ such that $p \prec q$ implies $f(p) < f(q)$; $P$ 
is \emph{locally ranked} if every interval $\interval{p,q}$ is a ranked poset 
with 
the induced order. A \emph{chain} in $P$ is a subset which is totally ordered 
with the induced order, and the \emph{rank} of $P$ is the supremum of the 
cardinalities of all chains in $P$. The poset $P$ is a \emph{lattice} if any 
two elements have a least upper bound, called their \emph{join}, and a maximal 
lower bound, called their \emph{meet}. In the sequel we write $\interval n = 
\interval{1,n}$ for any $n \in \ZZ_{>0}$. 

The partial order $\preceq$ on $P$ is \emph{total} or \emph{linear} if 
every pair of elements is comparable. Let $P$ be a linearly ordered set. An 
\emph{initial subset} is a subset $I \subset P$ such that $a \in I$ and $a' 
\prec a$ implies $a' \in I$. A \emph{terminal subset} is a subset $F \subset P$ 
with the opposite property: if $a \in F$ and $a' \in P$ satisfies $a' \succ a$ 
then $a' \in F$. If $\preceq$ is a linear order on $P$ we denote by $P^\op$ the 
set $P$ with the opposite order. If $P$ is a partially ordered set isomorphic
to a subset of $\ZZ$, and $p \in P$, we write $p+1$ and $p-1$ for the immediate 
successor and the immediate predecessor of $p$, assuming these exist.

Given linearly ordered sets $P$ and $Q$, we denote by $P 
\sqcup Q$ the set $P \times \{0\} \cup Q \times \{1\}$ with the total 
lexicographic order. More generally, given a linearly ordered set $I$ and a 
family of linearly ordered sets $(P_i)_{i \in I}$, we denote by $\bigsqcup_{i 
\in I} P_i$ the set $\bigcup_{i \in I} P_i \times \{i\}$ with the total 
lexicographic order. We will sometimes identify the set $P_i$ with $P_i \times 
\{i\}$ when the context makes this clear, in particular when $P$ and $Q$ are 
disjoint.

\subsection{The Lie algebra $\gl(\infty)$}
The base field is the field of complex numbers $\CC$. We fix a 
countable-dimensional space $\VV$ and a countable-dimensional subspace $\VV_*$ 
of $\VV^* := \hom_\CC(\VV,\CC)$ such that the pairing $\VV \otimes \VV_* \to 
\CC$ (induced by the pairing $\VV \otimes \VV^* \to \CC$) is non-degenerate. 
The space $\VV \otimes \VV_*$ is an associative $\CC$-algebra: the 
multiplication is given by
\[
  \left(\sum_i v_i \otimes \phi_i\right)
    \left(\sum_j v'_j \otimes \phi'_j\right)
    = \sum_{i,j} \phi_i(v'_j)v_i \otimes \phi_j
\]
for $v_i, v'_j \in \VV$ and $\phi_i, \phi'_j \in \VV_*$. By definition, 
$\gl(\infty)$ is the Lie algebra determined by the associative algebra $\VV 
\otimes \VV_*$, and $\VV$ and $\VV_*$ are the \emph{defining representations} 
of $\gl(\infty)$. Denoting the pairing $\VV \otimes 
\VV_* \to \CC$ by $\operatorname{tr}$, we define the Lie algebra $\sl(\infty)$ as 
$\ker \operatorname{tr}$. In this way we obtain a simple Lie subalgebra of 
codimension $1$ in $\gl(\infty) = \VV \otimes \VV_*$. Note however that the exact sequence of $\sl(\infty)$-modules
\[
  0 \to \sl(\infty) \to \VV \otimes \VV_* \stackrel{\operatorname{tr}}{\to} \CC \to 0
\]
does not split.

According to a theorem of G. Mackey \cite{Mackey45}, there exists a basis 
$\{e_k\}_{k \geq 1}$ of $\VV$ such that its dual system $\{e_k^*\}_{k \geq 1}$
is a basis of $\VV_*$. Setting $V_n := \vspan{e_k \mid 1 \leq k \leq n}$ and 
$V_n^* := \vspan{ e_k^* \mid 1 \leq k \leq n}$, we obtain $\gl(\infty)$ as the
inductive limit of the finite dimensional subalgebras $\gl(V_n) \subset 
\gl(\infty)$. A \emph{splitting Borel subalgebra} $\lie b$ of $\gl(\infty)$ is 
the inductive limit of a chain of Borel subalgebras $\lie b_n \subset \gl(V_n)$ 
for any choice of $V_n$ as above. It is well known that any such $\lie b$ 
contains the limit $\lie h = \varinjlim \lie h_n$ of some Cartan subalgebras 
$\lie h_n 
\subset \lie b_n$. We fix the pair $\lie h \subset \gl(\infty)$, and will 
consider various splitting Borel subalgebras $\lie b$ containing $\lie h$. In 
particular we will assume that $V_n$ are chosen so that $\lie h_n = \lie h 
\cap \gl(V_n)$ is a Cartan subalgebra for all $n$.

A \emph{weight $\gl(\infty)$-module} $\mathbf N$ is an $\gl(\infty)$-module which 
is semisimple over $\lie h$:
\[\mathbf N = \bigoplus_{\wt\mu \in \lie h^*} \mathbf N_{\wt\mu}\]
where $\lie h^* = \hom(\lie h, \CC)$ and 
\[\mathbf N_{\wt\mu} := \{v \in \mathbf N \mid hv = \wt\mu(h)v \  \forall 
  h \in \lie h\}.\]
The elements of $\lie h^*$ are the \emph{weights} of $\gl(\infty)$, and the 
\emph{support} $\supp \mathbf N$ of the module $\mathbf N$ is defined by setting 
$\supp \mathbf N := \{\wt\mu \in \lie h^* \mid  \mathbf N_{\wt\mu} \neq 0\}$. The 
nonzero spaces $\mathbf N_{\wt\mu}$ are the \emph{weight spaces} of the module 
$\mathbf N$. The defining representation $\VV$
is a weight $\gl(\infty)$-module and its weights are $\{\epsilon_i\}_{i \in 
\II}$, where $\II$ is a countable set which will be fixed throughout this paper.
As usual we denote a weight $\wt\lambda \in \lie h^*$ by $\sum_{i \in \II} 
\wt\lambda_i \epsilon_i$, where $\wt\lambda_i = \wt\lambda(E_{i,i})$; notice that
$\wt\lambda$ is well defined even when the sum is infinite. We also fix the 
following notation: we let $\lie h^\circ$ be the $\CC$-span of the set
$\{\epsilon_i\mid i \in \II\}$, and given $A \subset \II$ put $\epsilon_A := 
\sum_{i \in A} \epsilon_i$.

Suitably chosen outer automorphisms of $\gl(V_n)$ extend to an 
outer automorphism $\tau$ of $\gl(\infty)$ acting by $-\Id$ on $\lie h$, and 
any $\gl(\infty)$-module $\mathbf N$ can be twisted by this automorphism. We 
denote the result by $\mathbf N_*$. If $\mathbf N$ is a weight module then 
$\supp \mathbf N_* = - \supp \mathbf N$. In particular, $\VV_*$ is nothing but
$\VV$ twisted by the automorphism $\tau$, and $\supp \VV_* = \{-\epsilon_i
\}_{i \in \II}$.

The adjoint representation of $\gl(\infty)$ is a weight module, and its weights 
are $\{\epsilon_i - \epsilon_j\}_{i,j \in \II}$. The nonzero weights of the
adjoint representation are the \emph{roots} of $\gl(\infty)$, and the zero 
weight space of $\gl(\infty)$ is $\lie h$. A splitting Borel subalgebra $\lie b 
\supset \lie h$ is determined by a linear order $\preceq$ on the set $\II$:
\[\lie b_{\preceq} := \lie h \oplus \bigoplus_{i \prec j} 
  \gl(\infty)_{\epsilon_i - \epsilon_j}.\]
The \emph{positive roots} of $\lie b$ are the roots $\epsilon_i - \epsilon_j$ 
with $i \prec j$, and the \emph{negative roots} are $\epsilon_i - \epsilon_j$
with $i \succ j$; the \emph{simple roots} of $\lie b$ are the positive roots 
which can not be written as a sum of two positive roots. In contrast with the 
finite-dimensional case, not every positive root is in the $\ZZ_{>0}$-span of 
the simple roots of $\lie b$. The linear order $\preceq$ which defines 
$\lie b$ determines a partial order on $\lie h^*$, again denoted $\preceq$, 
where $\wt\lambda \preceq \wt\mu$ if and only if $\wt\mu - \wt\lambda$ is a sum 
of positive roots. Given a splitting Borel $\lie b$, we denote by $\lie b^{\op}$
the unique splitting Borel subalgebra whose positive roots are the negative 
roots of $\lie b$.

\subsection{Integrable highest weight $\gl(\infty)$-modules}
A weight $\wt\lambda \in \lie h^*$ is \emph{integral} if $\wt\lambda_i \in \ZZ$
for all $i \in \II$. A $\gl(\infty)$-module $\mathbf N$ is \emph{integrable} if 
its restriction to $\gl(V_n)$ is a direct sum of finite-dimensional 
$\gl(V_n)$-modules with integral weights. This means that $\mathbf N$ 
integrates to a $\mathsf{GL}(V_n)$-module for any $n$. 

A weight module $\mathbf N$ is a \emph{$\lie b$-highest weight module} for a 
given Borel 
subalgebra $\lie b$ if $\mathbf N$ is generated by a single weight vector $v$ 
such that $[\lie b, \lie b]v = 0$. The weight of the vector $v$ is the 
\emph{highest weight} of $\mathbf N$. The usual construction shows that given a 
linear order $\preceq$ on $\II$ and a weight $\wt\lambda \in \lie h^*$ then 
there exists a $\lie b_{\preceq}$-highest weight module with highest weight 
$\wt\lambda$, which we denote by $\mathbf L_{\lie b_{\preceq}}(\wt\lambda)$. 

We say that $\wt\lambda \in \lie h^*$ is \emph{$\lie b_{\preceq}$-dominant} if 
$\wt\lambda_i - \wt\lambda_{j} \in \ZZ_{\geq 0}$ whenever $i 
\prec j$. The simple integrable modules which are $\lie b_{\preceq}$-highest 
weight modules are precisely the modules $\mathbf L_{\lie b_{\preceq}}
(\wt\lambda)$ such that $\wt\lambda$ is integral and $\lie 
b_{\preceq}$-dominant. In the rest of the paper we assume automatically that 
all weights are integral. We only consider integrable simple highest weight 
modules, and simplify the notation from $\mathbf 
L_{\lie b_{\preceq}}(\wt\lambda)$ to $\mathbf L(\wt\lambda)$ by assuming that 
$\preceq$ is any linear order on $\II$ for which $\wt\lambda$ is $\lie 
b_{\preceq}$-dominant. In addition, it is easy to check that there is an 
isomorphism $\mathbf L(\wt\lambda) \cong \mathbf L(\wt\mu)$ if and only if 
there is a bijective map $\sigma: \II \to \II$ which is the identity outside of 
a finite set and such that $\wt\lambda_i = \wt\mu_{\sigma(i)}$.

\subsection{Tensor modules}
A $\gl(\infty)$-module is a \emph{tensor module} if it is isomorphic to
a submodule of a direct sum of finitely copies of the tensor algebra $T(\VV 
\oplus \VV_*)$. For instance, 
$\sl(\infty)$ is a simple tensor module, being a submodule of $\VV \otimes 
\VV_*$. Moreover, all simple tensor modules are highest weight modules with 
respect to the Borel subalgebra corresponding to the \emph{programmers order} 
on $\II$, defined as $\ZZ_{>0} \sqcup \ZZ_{<0}$ where the orders of 
$\ZZ_{>0}$ and $\ZZ_{<0}$ are the usual ones and $i \prec j$ whenever $i \in 
\ZZ_{>0}, j \in \ZZ_{<0}$. The highest weight of a simple tensor module 
is of the form $\wt\lambda = \lambda_1 \epsilon_1 + \cdots + 
\lambda_n \epsilon_n - \mu_{k} \epsilon_{-k} - \cdots -\mu_{1}\epsilon_{-1}$ 
for some $n,k \in \ZZ_{\geq 0}$ and $\lambda_1 \geq \cdots \geq \lambda_n, 
\mu_1 \geq \cdots \geq \mu_k$. In particular the highest weight of $\sl(\infty)$
is $\epsilon_1 - \epsilon_{-1}$ \cites{PS11b,HP22}. More generally, it follows 
from the results in the previous section that the tensor module $\mathbf 
L(\wt\lambda)$ is a highest weight module for a Borel subalgebra $\lie 
b_{\preceq}$ if and only if the order $\preceq$ has an initial subset 
isomorphic to $\interval n$ and a terminal subset isomorphic to $\interval k$.

\subsection{Weight multiplicity-free $\gl(\infty)$-modules}
A weight $\gl(\infty)$-module $\mathbf N$ is \emph{weight multiplicity-free} if
$\dim \mathbf N_{\wt\mu} = 1$ for all $\wt\mu \in \supp \mathbf N$. In 
\cite{GP20} simple weight multiplicity free $\sl(\infty)$-modules with a 
highest weight are classified. Up to isomorphism, these are the symmetric and 
exterior powers $\SS^d\VV, \bigwedge^d \VV, \SS^d\VV_*, \bigwedge^d \VV_*$, 
and the Fock modules $\bigwedge^A \VV$ which we now define. Any infinite set 
$A \subset \II$ with infinite complement $B = \II \setminus A$ determines a 
family of Borel subalgebras $\lie b_{\preceq}$ for which $A$ is an initial 
subset of $(\II, \preceq)$. For all such $\lie b_{\preceq}$, the simple $\lie 
b_{\preceq}$-highest weight modules $\mathbf L_{\lie b_{\preceq}}(\epsilon_A)$
are canonically isomorphic, and we define the module $\bigwedge^A \VV$ as any 
of those modules.
A weight in the support of $\bigwedge^A \VV$ is of the form $\sum_{i \in B} 
\epsilon_i$, where $B \subset \II$ and $\# A \setminus B = \# B \setminus A < 
\infty$. The module $\bigwedge^A \VV$ is minuscule in the sense that every 
weight is a highest weight for a suitable $\lie b_{\preceq}$.

Since we work with $\gl(\infty)$ and not $\sl(\infty)$ we need to adapt the 
classification from \cite{GP20} to $\gl(\infty)$. It is easy to check that a 
simple highest weight $\gl(\infty)$-module which is multiplicity-free is 
isomorphic to one of the modules $\SS^d\VV, \bigwedge^d \VV, \SS^d\VV_*, 
\bigwedge^d \VV_*$ or $\bigwedge^A \VV$, possibly tensored by $\mathbf 
D^{\otimes n}$ for any $n \in \ZZ$. Here $\mathbf D$ is the one-dimensional 
$\gl(\infty)$-module of weight $\sum_i \epsilon_i$. We use the convention that 
$\mathbf D^{\otimes - n} = \mathbf D_*^{\otimes n}$. Note that $\mathbf D$ is 
not a tensor module.

\subsection{Compatible orders}
The \emph{support} $\supp \wt\lambda$ of a weight $\wt\lambda$ is the set of all 
$i \in \II$ such that $\wt\lambda_i \neq 0$. 
We will often identify $\wt\lambda$ with the function $i \in \II \mapsto 
\wt\lambda_i \in \ZZ$, and so use notation like $\wt\lambda^{-1}(a) = \{i \in \II 
\mid \wt\lambda_i = a\}$ for $a \in \ZZ$ and $\wt\lambda(Z) = \{\wt\lambda_i \mid 
i \in Z\}$ for $Z \subset \II$. Note that $\wt\lambda$ is $\lie 
b_{\preceq}$-dominant if and only if $\II = \bigsqcup_{a \in \ZZ^\op} 
\wt\lambda^{-1}(a)$, where the order on $\wt\lambda^{-1}(a)$ is induced from 
$\II$. 

\begin{defn}
\label{defn:compatible}
Let $\wt\lambda \in \lie h^*$ and let $\mathbf F$ be one of $\SS^d\VV, 
\bigwedge^d \VV$ or $\bigwedge^A \VV$. We say that a total order $\preceq$ on 
$\II$ is \emph{($\wt\lambda, \mathbf F$)-compatible} if it satisfies the 
following 
conditions:
\begin{enumerate}
  \item $\wt \lambda$ is $\lie b_{\preceq}$-dominant.
  \item For every $a \in \ZZ$ the poset $\wt\lambda^{-1}(a)$ is order-isomorphic 
    to a subset of $\ZZ$.
  \item For each $\wt\mu \in \supp F$ the set $\wt\lambda^{-1}(a) \cap \supp 
    \wt\mu$ has a maximum and the set $\wt\lambda^{-1}(a) \setminus \supp \wt\mu$ 
    has a minimum.
\end{enumerate}
\end{defn}

\begin{lem}
For $\wt\lambda \in \lie h^*$ and $\mathbf F$ be as above, there exists a 
$(\wt\lambda,\mathbf F)$-compatible total order $\preceq$. Furthermore, if 
$\mathbf L(\wt\lambda)$ and $\mathbf F$ are both $\lie b_{\preceq'}$-highest 
weight modules for some total order $\preceq'$ then $\mathbf F$ is also a 
$\lie b_{\preceq}$-highest weight module.
\end{lem}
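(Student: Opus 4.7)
The plan is to define $\preceq$ level by level using the decomposition $\II = \bigsqcup_{a \in \ZZ^\op} X_a$, where $X_a := \wt\lambda^{-1}(a)$. For any choice of internal total orders on the $X_a$'s, the lexicographic combination automatically satisfies condition (i), since $\wt\lambda$-dominance is exactly the existence of such a decomposition. It remains to specify the internal orders on each $X_a$ so as to realize (ii) and (iii); this depends on $\mathbf F$.

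When $\mathbf F = \SS^d\VV$ or $\bigwedge^d\VV$, every support $\supp\wt\mu$ is finite, and I would put on each nonempty $X_a$ a total order order-isomorphic to $\{1, \ldots, |X_a|\}$ or to $\NN$. Then (ii) is immediate, $X_a \cap \supp\wt\mu$ is finite and has a maximum, and $X_a \setminus \supp\wt\mu$ is a bounded-below subset of $\ZZ$ with a finite subset removed, still with a minimum. When $\mathbf F = \bigwedge^A\VV$, the supports are infinite sets $B \subset \II$ with $|A \triangle B| < \infty$, and I would order each $X_a$ as the concatenation of $X_a \cap A$ followed by $X_a \setminus A$, where $X_a \cap A$ carries a total order order-isomorphic to a finite set or to $-\NN$ (so it has a maximum) and $X_a \setminus A$ carries a total order order-isomorphic to a finite set or to $\NN$ (so it has a minimum). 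The resulting order on $X_a$ is order-isomorphic to a subset of $\ZZ$, giving (ii). For (iii), writing $X_a \cap B = (X_a \cap A \setminus F_1) \cup (X_a \cap F_2)$ with $F_1 = A \setminus B$ and $F_2 = B \setminus A$ both finite, the first summand retains a maximum as a bounded-above subset of $\ZZ$ with finitely many elements removed, and adjoining the finite set $X_a \cap F_2$ preserves this. The argument for the minimum of $X_a \setminus B$ is analogous.

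For the furthermore part, the hypothesis provides, in each case, the extra information needed to show that the constructed $\preceq$ also makes $\mathbf F$ a $\lie b_{\preceq}$-highest weight module. When $\mathbf F = \SS^d\VV$ (resp.\ $\bigwedge^d\VV$), the existence of a $\lie b_{\preceq'}$-highest weight vector forces $\II$ to have a $\preceq'$-minimum (resp.\ a $\preceq'$-initial subset of size $d$); together with $\preceq'$-dominance of $\wt\lambda$ this implies that $\wt\lambda$ attains a maximum. The top nonempty level $X_{a_{\max}}$ under $\preceq$ then has a minimum by construction, and iterating yields an initial subset of $\II$ of size $d$ under $\preceq$, producing the required $\lie b_{\preceq}$-highest weight vector of $\mathbf F$. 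When $\mathbf F = \bigwedge^A\VV$, the hypothesis that $A$ is $\preceq'$-initial combined with $\preceq'$-dominance yields $\wt\lambda_i \geq \wt\lambda_j$ whenever $i \in A$ and $j \notin A$. Consequently, if $\wt\lambda_i > \wt\lambda_j$ then $i$ precedes $j$ across levels under $\preceq$, while if $\wt\lambda_i = \wt\lambda_j$ then both lie in the same $X_a$ and our internal order places $i \in X_a \cap A$ before $j \in X_a \setminus A$. Hence $A$ is $\preceq$-initial, so $\bigwedge^A\VV \cong \mathbf L_{\lie b_{\preceq}}(\epsilon_A)$.

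The main technical point is the Fock case: condition (iii) must be verified for the infinite family of supports $\{\epsilon_B \mid |A \triangle B| < \infty\}$, but this reduces to the single case $B = A$ by the stability of maxima and minima under finite perturbations of a subset of $\ZZ$.
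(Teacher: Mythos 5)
Your proof is correct and follows essentially the same route as the paper: construct $\preceq$ level-by-level on each $X_a = \wt\lambda^{-1}(a)$ so that (i) is automatic and the internal orders force (ii) and (iii), then run a case analysis for the ``furthermore'' part. The internal orderings you choose (subsets of $\ZZ_{>0}$ for $\SS^d\VV$ and $\bigwedge^d\VV$; $(X_a\cap A)$ ordered like a subset of $\ZZ_{<0}$ concatenated with $(X_a\setminus A)$ ordered like a subset of $\ZZ_{>0}$ for $\bigwedge^A\VV$) are exactly the paper's, and your observations about stability of extrema under finite perturbations match the paper's use of $\#(A\triangle\supp\wt\mu)<\infty$.

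One point you glide over in the Fock case of the ``furthermore'' part: being a $\lie b_{\preceq'}$-highest weight module only tells you that $A$ differs from a $\preceq'$-initial subset by \emph{finitely many} elements, not that $A$ is literally $\preceq'$-initial. The paper handles this by explicitly passing to such a representative (``without loss of generality $A$ is an initial subset of $(\II,\preceq')$''), which is harmless because $\bigwedge^A\VV$ and the compatibility conditions depend only on $A$ up to finite symmetric difference, but it does change which $\preceq$ is being constructed. You should state this reduction, since your argument ``$\wt\lambda_i\ge\wt\lambda_j$ whenever $i\in A$, $j\notin A$'' fails for a non-initial representative of the finite-modification class. Your direct verification that $A$ is then $\preceq$-initial (splitting into the strict-inequality and equal-level cases) is a clean equivalent of the paper's structural description of $A$ as a union of full levels plus an initial piece of one boundary level.
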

\begin{proof}
We first show the existence of a $(\wt\lambda,\mathbf F)$-compatible order 
$\preceq$
on $\II$, i.e., a linear order $\preceq$ such that $\II = \bigsqcup_{a \in 
\ZZ^\op} \wt\lambda^{-1}(a)$. For this it is enough to assign a total order on 
each $\wt\lambda^{-1}(a)$ so that conditions $(ii)$ and $(iii)$ of Definition
\ref{defn:compatible} are satisfied.

For $\mathbf F = \SS^d\VV, \bigwedge^d \VV$ we impose on $\wt\lambda^{-1}(a)$ 
an order making it isomorphic to a subset of $\ZZ_{>0}$; then 
$\wt\lambda^{-1}(a) \cap \supp \mu$ is a finite set and hence always has a 
maximum, while its complement has a minimum by the well-ordering principle. 
For $\mathbf F = \bigwedge^A \VV$ we impose on $\wt\lambda^{-1}(a) 
\cap A$ an order-isomorphic to that of a subset of $\ZZ_{<0}$, on 
$\wt\lambda^{-1}(a) \setminus A$ an order-isomorphic to that of a subset of  
$\ZZ_{>0}$, and put $\wt\lambda^{-1}(a) = (\wt\lambda^{-1}(a) \cap A) \sqcup 
(\wt\lambda^{-1}(a) \setminus A)$. 
If $\wt\mu \in \supp F$ then $\supp \wt\mu$ differs from $A$ in finitely many 
elements, and since $\wt\lambda^{-1}(a) \cap A$ has a minimum and 
$\wt\lambda^{-1}(a) \setminus A$ has a maximum, the sets $\wt\lambda^{-1}(a) 
\cap \supp \wt\mu$ and $\wt\lambda^{-1}(a) \setminus \supp \wt\mu$ also have 
respectively a minimum and a maximum. This completes the proof of the existence 
of a $(\wt\lambda, \mathbf F)$-compatible order.

Suppose now $\mathbf L(\wt\lambda)$ and $\mathbf F$ are both $\lie 
b_{\preceq'}$-highest weight modules. If $\mathbf F = \SS^d\VV$ then $(\II,
\preceq')$ must have a minimum, which implies that $\wt\lambda(\II) \subset 
\ZZ$ must have a maximum $m \in \ZZ$. Then, if $i$ is the minimal element of 
$\wt\lambda^{-1}(m)$, any nonzero vector in $\SS^d\VV$ of weight $d \epsilon_i$ 
is a 
$\lie b_{\preceq}$-highest weight vector. If $\mathbf F = \bigwedge^d \VV$ then 
$\preceq'$ has an initial sequence of $d$ elements $i'_1 \prec' i'_2 \prec' 
\cdots \prec' i'_d$. Since $\wt\lambda$ is $\lie b_{\preceq'}$-dominant we  
have $\wt\lambda_{i'_1} \geq \wt\lambda_{i'_2} \geq \cdots \geq 
\wt\lambda_{i'_d}$, and the order $\preceq$ constructed above also has an 
initial sequence of $d$ elements $i_1 \prec i_2 \prec \cdots \prec i_d$. Now it 
is clear that any nonzero vector of weight $\epsilon_{i_1} + \epsilon_{i_2} + 
\cdots + \epsilon_{i_d}$ is a $\lie b_{\preceq}$-highest weight vector of 
$\bigwedge^d \VV$. 

Finally, let $\mathbf F = \bigwedge^A \VV$. This is a $\lie 
b_{\preceq'}$-highest weight module if and only if $A$ differs from an initial
subset of $(\II,\preceq')$ by finitely many elements; without loss of 
generality we can and will assume that $A$ is an initial subset of $(\II,
\preceq')$. It follows that $A = (A \cap \wt\lambda^{-1}(r)) \cup \bigcup_{a < 
r} \wt\lambda^{-1}(a)$ for some $r \in \ZZ$. By construction this is an 
initial subset in $(\II,\preceq)$, so $\mathbf F$ is a $\lie 
b_{\preceq}$-highest weight module.
\end{proof}

\subsection{Filtrations}
We adopt the following terminology concerning filtrations. A \emph{filtration} 
of a module $\mathbf N$ is a chain of distinct submodules $\{F_\ll\mathbf 
N\}_{\ll \in \LL}$ where $\LL$ is a linearly ordered set isomorphic to a subset 
of $\ZZ$. The quotients $\overline F_{\ll+1} \mathbf N := F_{\ll+1} \mathbf N / 
F_\ll \mathbf N$ are the \emph{layers} of the filtration. A filtration is 
\emph{exhaustive} if $\bigcup_{\ll \in \LL} F_\ll \mathbf N = \mathbf N$ and $
\emph{separated}$ if $\bigcap_{\ll \in \LL} F_\ll \mathbf N = \{0\}$. If $\mathbf 
L$ is a simple module its \emph{multiplicity in $\mathbf N$}, denoted by 
$[\mathbf N: \mathbf L]$, is the supremum of the number of times $\mathbf L$ 
appears as a layer in a finite filtration of $\mathbf N$. By definition 
$[\mathbf N: \mathbf L] \in \ZZ_{\geq 0} \cup \{\infty\}$. 

The module $\mathbf N$ is said to be \emph{multiplicity free} if $[\mathbf 
M: \mathbf L] \leq 1$ for any simple module $\mathbf L$.
A \emph{composition series} is an exhaustive and discrete filtration in which 
every layer is simple, and such that for every simple subquotient $\mathbf L$ of 
$\mathbf N$, there are $[\mathbf N: \mathbf L]$ layers isomorphic to $\mathbf L$. 
If $\mathbf N$ is multiplicity free we say that a simple constituent 
$\mathbf L$ is \emph{linked} to a simple constituent $\mathbf L'$ if for every
submodule $\mathbf S \subset \mathbf N$ such that $[\mathbf S: \mathbf L] = 1$
then $[\mathbf S: \mathbf L'] = 1$.

Recall that the \emph{socle} of $\mathbf N$, denoted $\soc \mathbf N$, is the 
maximal semisimple submodule of $\mathbf N$. The \emph{socle filtration} of 
$\mathbf N$ is defined via induction by setting $\soc^{(0)} \mathbf N := \{0\}$ 
and $\soc^{(i+1)}\mathbf N := p^{-1} (\soc \left(\mathbf N /  
\soc^{(i)} \mathbf N\right))$, where $p: \mathbf N \to \mathbf N/ \soc^{(i)} 
\mathbf N$ is the canonical projection. 

The \emph{radical} of $\mathbf N$, denoted $\rad \mathbf N$, is the intersection 
of the kernels of all maps $\pi: \mathbf N \to \mathbf L$ with $\mathbf L$ a 
simple $\gl(\infty)$-module. The \emph{radical filtration} of $\mathbf N$ is 
similarly defined by induction, setting $\rad^{(0)} \mathbf N = 
\mathbf N$ and $\rad^{(i+1)} \mathbf N = \rad \left( \rad^{(i)} 
\mathbf N\right)$.  
A module $\mathbf N$ is \emph{rigid} if its socle filtration is exhaustive, its 
radical filtration is separated, and they coincide.

We say that a module $\mathbf N$ has a well-defined \emph{Loewy length} if it has 
at least one exhaustive and discrete filtration whose layers are semisimple. The 
Loewy length of the module is then the minimum of the lengths of these 
filtrations, and is denoted by $\mathcal L \ell(\mathbf N)$. By a classical 
result the socle filtration of the module $\mathbf N$ is finite and exhaustive 
if and only if its radical filtration is finite and discrete, and in this case 
both filtrations have the same length, which is the Loewy length of the module. 
A proof of this fact for finite length modules can be found in \cite{ASSI}*{
Chapter V, Proposition 1.3}, and the proof adapts easily to our more general 
case.


\section{Main statements}
Fix a weight $\wt\lambda \in \lie h^*$.
From this point on, we assume that $\mathbf F$ is either $\bigwedge^d \VV, 
\SS^d\VV$ or $\bigwedge^{A} \VV$, and that the fixed  total order $\preceq$ 
on $\II$ is $(\wt\lambda, \mathbf F)$-compatible. We set $\lie b = \lie 
b_{\preceq}$ and $\mathbf M := \mathbf L(\wt\lambda) \otimes \mathbf F$.

\subsection{The sets $\JJ$ and $\JJ_{\infty}$}
We introduce an equivalence relation $\sim$ in $\II$: we write $i \sim i'$ if 
and only if $\wt\lambda_i = \wt\lambda_{i'}$ and set $\JJ := \II/\sim$. By 
definition every equivalence class $j \in \JJ$ is a set $\wt\lambda^{-1}(a)$
for some $a \in \ZZ$, and thus an interval in the order $\preceq$. This implies 
that $\JJ$ inherits a total order from $\II$, and that the map $i \in \II 
\mapsto \wt\lambda_i \in \ZZ^\op$ induces an order-preserving injection of 
$\JJ$ into $\ZZ^\op$. 

Denote by $\pi_{\JJ}: \II \to \JJ$ the natural projection, and let $s: \JJ \to 
\II$ be any section. We define a further equivalence relation $\sim_\infty$ in 
$\JJ$: if $j \preceq j'$ then $j \sim_\infty j'$ if and only if the interval 
$\interval{s(j),s(j')} \subset \II$ is finite. The definition of a $(\wt\lambda,
\mathbf F)$-compatible order guarantees that if $i \prec i'$ and $\pi_\JJ(i) = 
\pi_{\JJ}(i')$ then $\interval{i,i'}$ is finite, hence the equivalence relation
$\sim_\infty$ is independent of the choice of $s$. Similarly to $\JJ$, the set 
$\JJ_\infty := \JJ/\sim_\infty$ inherits a total order and is order-isomorphic 
to a subset of $\ZZ$. 


Denote by $\CC^{(\JJ)}$ the set of all finitely supported functions $\JJ \to 
\CC$. We define a linear map from $\lie h^\circ$ to 
$\CC^{(\JJ)}$ by assigning to each $\wt\gamma = \sum_i a_i \epsilon_i$ the 
function $\delta_{\wt\gamma}$ where $\delta_{\wt\gamma}(j) = \sum_{i \in j} 
a_i$. Furthermore, we set $\CC^{(\JJ)}_0 = \vspan{\delta_j - \delta_{j'} \mid 
j,j' \in \JJ}$, and introduce the linear map $h: \CC_0^{(\JJ)} \to \CC$, 
$h(\delta_j-\delta_{j'}) = \# \interval{j,j'} - 1$. If $\wt\gamma,\wt\nu \in 
\lie h^*$ and $\wt\nu - \wt\gamma$ is in the root lattice of $\gl(\infty)$, we 
put $h(\wt\nu,\wt\gamma) = h(\delta_{\wt\nu - \wt\gamma})$.

In an analogous way we assign to every $\wt\gamma \in \lie h^\circ$ a map 
$\delta^\infty_{\wt\gamma}: \JJ_{\infty} \to \CC$, define a linear map 
$h_{\infty}: \CC^{(\JJ_\infty)}_0 \to \CC$, and a function $h_\infty(\wt\nu,
\wt\gamma) = h_\infty(\delta^\infty_{\wt\nu - \wt\gamma})$ defined on pairs 
$\wt\gamma, \wt\nu$ such that $\wt\nu - \wt\gamma$ is in the root lattice. If 
$\wt\nu \succ \wt\gamma$ then $0 \leq h_\infty(\wt\nu,\wt\gamma) \leq 
h(\wt\nu,\wt\gamma)$.

\begin{defn}
Given $\wt\nu,\wt\gamma \in \lie h^*$ we write $\wt\nu \ggcurly \wt\gamma$ 
if $\wt\nu = \wt\gamma$ or $\wt\nu - \wt\gamma$ can be written as a sum of 
roots $\epsilon_i - \epsilon_j$ with $h_\infty(\epsilon_i,\epsilon_j) > 0$.
Equivalently, $\wt\nu \ggcurly \wt\gamma$ if and only if $\wt\nu \succ 
\wt\gamma$ and $h_\infty(\wt\nu,\wt\gamma) = h(\wt\nu,\wt\gamma)$.
\end{defn}

\subsection{Pieri sets and filtrations} 

\begin{defn}
If $\mathbf F = \bigwedge^d \VV, \bigwedge^A \VV$ we define 
$\Pieri{\wt\lambda}{\mathbf F}$ to be the set of all $\wt\gamma \in \supp 
\mathbf F$ such that $\wt\lambda + \wt\gamma$ is $\lie b_{\preceq}$-dominant, 
i.e., such that $\wt\lambda_i + \wt\gamma_i \geq \wt\lambda_{i'} + 
\wt\gamma_{i'}$ whenever $i \prec i'$.
If $\mathbf F = \SS^d\VV$ we define $\Pieri{\wt\lambda}{\mathbf F}$ to be the 
set of all $\wt\gamma \in \supp \mathbf F$ such that $\wt\lambda_i \geq \wt
\lambda_{i'} + \wt\gamma_{i'}$ whenever $i \prec i'$.
\end{defn}
We point out that $\wt\lambda + \wt\gamma$ is $\lie b_{\preceq}$-dominant whenever
$\wt\gamma \in \Pieri{\wt\lambda}{\SS^d\VV}$. Since 
$\Pieri{\wt\lambda}{\mathbf F}$ is a set of weights, it is a poset with the 
order induced by the order $\preceq$ of $\lie h^*$.

We now give a different characterization of the Pieri posets. Recall that
the support of $\SS^d\VV$ consists of weights $\epsilon_I$ with $I 
= \{i_1 \preceq i_2 \preceq \cdots \preceq i_d\}$, while that of $\bigwedge^d 
\VV$ is similar but with strict inequalities. If $I$ is any such sequence and
$X \subset \II$ then $I \cap X$ denotes the subsequence $\{i_{r_1} \preceq 
\cdots \preceq i_{r_s} \mid i_{r_k} \in X\}$. Finally, the support of 
$\bigwedge^A \VV$ consists of weights of the form $\epsilon_A - \epsilon_I + 
\epsilon_{I'}$ with $I \subset A, I' \subset B$ and $\#I = \#I'$. 

\begin{lem}
\label{lem:Pieri-varia}
The poset $\Pieri{\wt\lambda}{\mathbf F}$ is locally finite, and for any 
$\wt\mu \in \Pieri{\wt\lambda}{\mathbf F}$ the map $\wt\gamma \in 
\Pieri{\wt\lambda}{\mathbf F} \mapsto f_{\wt\gamma - \wt\mu} \in \CC^{(\JJ)}_0$ 
is injective. 
\end{lem}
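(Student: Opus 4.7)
The plan is to establish injectivity first and deduce local finiteness from it.

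\textbf{Injectivity.} Fix $\wt\mu \in \Pieri{\wt\lambda}{\mathbf F}$ and suppose $\wt\gamma,\wt\gamma' \in \Pieri{\wt\lambda}{\mathbf F}$ satisfy $\delta_{\wt\gamma - \wt\mu} = \delta_{\wt\gamma'-\wt\mu}$. I would show $\wt\gamma|_j = \wt\gamma'|_j$ class by class, exploiting the fact that the Pieri condition forces $\wt\gamma|_j$ to have a one-parameter shape on each $j \in \JJ$, which can be recovered from $\delta_{\wt\gamma - \wt\mu}(j)$ and the fixed data of $\wt\mu$. For $\mathbf F = \SS^d\VV$, the inequality $\wt\lambda_i \geq \wt\lambda_{i'}+\wt\gamma_{i'}$ for $i\prec i'$ in the same class forces $\wt\gamma_{i'}\le 0$ away from $\min j$, so $\wt\gamma|_j$ is concentrated at $\min j$ with value $\sum_{i\in j}\wt\gamma_i$. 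For $\mathbf F = \bigwedge^d\VV$, dominance forces $\wt\gamma|_j$ to be the indicator of an initial segment of $j$ of length $\sum_{i\in j}\wt\gamma_i$. For $\mathbf F = \bigwedge^A\VV$, using the block decomposition $j = (A\cap j)\sqcup(j\setminus A)$ provided by compatibility, $\wt\gamma|_j$ differs from the indicator of $A\cap j$ only in a block of $k_j$ zeros at the end of $A\cap j$ and a block of $\ell_j$ ones at the start of $j\setminus A$; weak monotonicity at the junction forces $k_j\ell_j = 0$, so the pair $(k_j,\ell_j)$ is determined by the single integer $\ell_j - k_j$, and this integer is read off from $\sum_{i\in j}(\wt\gamma_i - \wt\mu_i) = \delta_{\wt\gamma - \wt\mu}(j)$ given the corresponding data for $\wt\mu$.

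\textbf{Local finiteness.} Fix $\wt\gamma,\wt\gamma' \in \Pieri{\wt\lambda}{\mathbf F}$. By injectivity it suffices to show that the image $\{\delta_{\wt\nu - \wt\gamma} : \wt\nu \in \interval{\wt\gamma,\wt\gamma'}\}$ is finite. For any $\wt\nu$ in the interval, both $\wt\nu - \wt\gamma$ and $\wt\gamma' - \wt\nu$ are finite sums of positive roots; applying $\delta$ and using $\delta_{\epsilon_i - \epsilon_{i'}} = \delta_{[i]} - \delta_{[i']}$ with $[i]\preceq [i']$, the functions $\delta_{\wt\nu - \wt\gamma}$ and $\delta_{\wt\gamma' - \wt\nu}$ both lie in the convex cone $C \subset \CC_0^{(\JJ)}$ generated by $\{\delta_j - \delta_{j'} : j \prec j' \text{ in } \JJ\}$. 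The partial-sum functional $T_j(f) := \sum_{j'\preceq j}f(j')$ is nonnegative on $C$, giving
\[
  0 \leq T_j(\delta_{\wt\nu - \wt\gamma}) \leq T_j(\delta_{\wt\gamma' - \wt\gamma})
\]
for every $j \in \JJ$. The right-hand side is an integer-valued function supported on a finite subset of $\JJ$ (because $\wt\gamma' - \wt\gamma$ has finite support in $\II$ and $\JJ$ is order-isomorphic to a subset of $\ZZ$), so $T_\bullet(\delta_{\wt\nu - \wt\gamma})$ takes only finitely many values; since a finitely supported function on $\JJ$ is recovered from its partial sums, the same holds for $\delta_{\wt\nu - \wt\gamma}$.

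The main obstacle is the Fock module case of injectivity: one has to invoke the compatibility condition essentially to produce the block decomposition $j = (A\cap j)\sqcup(j\setminus A)$ with the correct extremal behavior (namely that $A\cap j$ has a maximum and $j\setminus A$ has a minimum), and then verify that dominance rules out configurations with both $k_j > 0$ and $\ell_j > 0$, so that the parametrization of $\wt\gamma|_j$ by a single integer—rather than two independent ones—is legitimate.
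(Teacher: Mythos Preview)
Your proof is correct and takes a genuinely different route from the paper's. The paper proceeds in the opposite order: it first establishes local finiteness by exhibiting an explicit order-embedding of $\Pieri{\wt\lambda}{\mathbf F}$ into the product poset $\JJ^d$ (for $\mathbf F = \SS^d\VV$ or $\bigwedge^d\VV$) or into $\JJ^{(\ZZ_{>0})}\times\JJ^{(\ZZ_{>0})}$ (for $\mathbf F = \bigwedge^A\VV$), and only then reads off the injectivity of $\wt\gamma\mapsto \delta_{\wt\gamma-\wt\mu}$ from that embedding. You instead recover $\wt\gamma|_j$ directly from the single integer $\delta_{\wt\gamma-\wt\mu}(j)$ and then deduce local finiteness by a clean partial-sum sandwich argument on $\CC_0^{(\JJ)}$. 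Your route has two advantages: the local-finiteness step is uniform across all three choices of $\mathbf F$ (no case analysis), and in the Fock case you make explicit the constraint $k_j\ell_j=0$ (initial segments of $j$ are nested), which is precisely what lets a \emph{single} integer parametrize $\wt\gamma|_j$ --- a point the paper's proof passes over when it asserts that $(\pi_\JJ(I),\pi_\JJ(I'))$ can be recovered from $\delta_{\pi_\JJ(I)}-\delta_{\pi_\JJ(I')}$. The paper's approach, on the other hand, yields a concrete coordinatization by tuples in $\JJ^d$ that is reused in later sections.
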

\begin{proof}
Suppose first $\mathbf F = \SS^d\VV, \bigwedge^d \VV$.
Consider the set $\JJ^d$ with the product order. This is a locally finite poset 
since $\JJ$ is isomorphic to a subset of $\ZZ$.
Given $\epsilon_I \in \Pieri{\wt\lambda}{\mathbf F}$ with $I = 
(i_1, \ldots, i_d)$, we have $\pi_{\JJ}(I) = (\pi_{\JJ}(i_1), \ldots, 
\pi_{\JJ}(i_d)) \in \JJ^d$. If $\mathbf F = \SS^d\VV$ we recover $I$ from 
$\pi_\JJ(I)$ by taking $i_k$ to be the first element in $\pi_\JJ(i_k)$. If 
$\mathbf F = \bigwedge^d \VV$ we recover $I$ from $J$ as follows: let 
$i_1$ to be the first element of $\pi_\JJ(i_1)$, and once $i_k$ has been 
determined let $i_{k+1}$ to be the minimal element in $\pi_\JJ(i_{k+1})$ that 
is larger than $i_k$. Thus the map $I \mapsto \pi_\JJ(I)$ is injective in 
both cases. Notice also that $\epsilon_{I} - \epsilon_{I'} \preceq 0$ if and 
only if $I$ is less than $I'$ seen as elements of $\II^d$ with the product 
order, which happens if and only if $\pi_\JJ(I) < \pi_{\JJ}(I')$. It follows 
that the map $I \mapsto \pi_\JJ(I)$ is monotone, so $\Pieri{\wt\lambda}{\mathbf 
F}$ is isomorphic to a subposet of $\JJ^d$ and therefore locally finite. 

Consider now the map that sends $J \in \JJ^d$ to $\delta_J = \sum_{k=1}^d 
\delta_{j_k} \in \CC^{(\JJ)}$. If we restrict the map to increasing sequences 
in $\JJ^d$ then we get an injective map, and so $ \epsilon_I \mapsto 
f_{\wt\gamma} = \delta_{\pi_\JJ(I)}$ is an injective map. It follows that, if 
we fix $\wt\mu \in \Pieri{\wt\lambda}{\mathbf F}$, the map $\wt\gamma 
\mapsto f_{\wt\gamma} - f_{\wt\mu}$ is also injective.

Now suppose $\mathbf F = \bigwedge^A \VV$ with $A \subset \II$ and $B = \II 
\setminus A$ infinite sets. By definition $\epsilon_A \in \Pieri{\wt\lambda}
{\mathbf F}$. Also, $\wt\gamma = \epsilon_A - \epsilon_I + \epsilon_{I'} \in 
\supp \mathbf F$ if and only if $I \cap j$ is a terminal subset of $A \cap j$ 
and $I' \cap j$ is an initial subset of $B \cap j$ for all $j \in \JJ$. The 
assignation $\wt\gamma \in \Pieri{\wt\lambda}{\mathbf F} \mapsto (\pi_{\JJ}(I), 
\pi_{\JJ}(I')) \in \JJ^{(\ZZ_{>0})} \times \JJ^{(\ZZ_{>0})}$ is once again 
injective and monotone, so $\Pieri{\wt\lambda}{\mathbf F}$ is locally finite. 
We can assume $\wt\mu = \epsilon_A$, and in this case $f_{\wt\gamma - \wt\mu} = 
\delta_{\pi_{\JJ}(I)} - \delta_{\pi_\JJ(I')}$, from which we can recover 
$(\pi_\JJ(I), \pi_\JJ(I'))$, and hence $\wt\gamma$. This completes the proof.
\end{proof}

\begin{defn}
Given $\wt\gamma,\wt\nu \in \Pieri{\wt\lambda}{\mathbf F}$ we write $\wt\gamma 
\equiv_\infty \wt\nu$ if and only if $h_\infty(\wt\gamma, \wt\nu) = 0$, and set 
$\LL_\infty = \Pieri{\wt\lambda}{\mathbf F}/\equiv_\infty$. We denote by 
$\leq_\infty$ the linear order on $\LL_\infty$ induced by the order opposite of 
$\preceq$.
\end{defn}

For a fixed $\wt\mu \in \Pieri{\wt\lambda}{\mathbf F}$ the map $\wt\gamma \in 
\Pieri{\wt\lambda}{\mathbf F} \mapsto h_\infty(\wt\mu,\wt\gamma) \in \ZZ$ is 
injective and monotone, so $\LL_\infty$ is isomorphic to a subset of $\ZZ$. 
If $\mathbf F$ is a $\lie b_{\preceq}$-highest weight module then we can take
$\wt\mu$ to be its highest weight. Notice that in this case the class of $\wt
\mu$ in $\LL_\infty$ is the minimum of $\LL_\infty$. 

\begin{defn}
The \emph{linkage filtration} of $\mathbf M$ is $\mathcal F = \{F_\ll \mathbf 
M\}_{\ll \in \LL_\infty}$ where $F_\ll M$ is the $\gl(\infty)$-module generated 
by $\{v_{\wt\lambda} \otimes e_{\wt\gamma} \mid \wt\gamma \in \ll' \leq_\infty 
\ll\}$. If $\LL_\infty$ has a minimum $\ell$ we set $F_{\ll-1}\mathbf M = 
\{0\}$, and if $\LL_\infty$ has a maximum $\ll'$ we set $F_{\ll'+1} \mathbf M = 
\mathbf M$.
\end{defn}

\subsection{The statements}
We introduce a technical condition.
\begin{defn}
Let $\pi = \pi_\infty \circ \pi_\JJ: \II \to \JJ_\infty$. We say $A$ is 
\emph{$\JJ_\infty$-initial} if for every $j \in \JJ_\infty$ the set $A \cap 
\pi^{-1}(j)$ is an initial subset of $\pi^{-1}(j)$.
\end{defn}
In the following statements we assume that $\mathbf F$ is either $\SS^d\VV, 
\bigwedge^d \VV$ or a Fock module $\bigwedge^A \VV$ where $A$ is 
$\JJ_\infty$-initial.

\begin{thm*}[\textbf A]
The representation $\mathbf M$ is multiplicity free, has a composition series, 
and every simple constituent is of the form $\mathbf L(\wt\lambda + \wt\gamma)$ 
for some $\wt\gamma \in \Pieri{\wt\lambda}{\mathbf F}$. Furthermore, a simple 
constituent $\mathbf L(\wt\lambda + \wt\nu)$ is linked to $\mathbf L(\wt\lambda 
+ \wt\gamma)$ if and only if $\wt\nu \ggcurly \wt\gamma$.
\end{thm*}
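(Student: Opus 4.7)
The strategy is to exploit the exhaustion $\mathbf{M} = \varinjlim_n L_n \otimes F_n$ provided by section 5, where $L_n$ and $F_n$ are simple finite-dimensional $\lie g_n$-modules whose transition maps are controlled by the results of section 4: on each summand of the classical Pieri decomposition of $L_n \otimes F_n$, the map into $L_{n+1} \otimes F_{n+1}$ is either zero or an embedding into a specific Pieri summand whose label can be read off combinatorially from the source label. The $(\wt\lambda, \mathbf{F})$-compatibility of $\preceq$ together with the $\JJ_\infty$-initiality of $A$ are precisely the hypotheses that make this control possible.

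The identification of the simple constituents follows directly from this exhaustion. For each $\wt\gamma \in \Pieri{\wt\lambda}{\mathbf{F}}$ one selects, at every sufficiently large stage $n$, the unique summand of $L_n \otimes F_n$ whose transition maps are nonzero for all further $m \geq n$; this coherent family assembles to a copy of $\mathbf{L}(\wt\lambda + \wt\gamma)$ in the limit. Conversely, every simple constituent of $\mathbf{M}$ arises this way, because the finite-stage decompositions exhaust $L_n \otimes F_n$. Distinct $\wt\gamma$ yield non-isomorphic simple limits by Lemma \ref{lem:Pieri-varia}, so $\mathbf{M}$ is multiplicity free, and the local finiteness of $\Pieri{\wt\lambda}{\mathbf{F}}$ allows a composition series to be exhibited by exhausting along finite initial subposets.

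For the linkage assertion I would argue necessity and sufficiency separately. For necessity, suppose $\wt\nu \not\ggcurly \wt\gamma$. If $\wt\nu \not\succeq \wt\gamma$ then a suitable term $F_\ll \mathbf{M}$ of the linkage filtration directly witnesses non-linkage: it is generated by vectors $v_{\wt\lambda} \otimes e_{\wt\gamma'}$ with $[\wt\gamma']_\infty \leq_\infty \ll$, and $\ll$ can be chosen so that the class of $\wt\nu$ is included but the class of $\wt\gamma$ is not. In the remaining case $\wt\nu \succ \wt\gamma$ but $h_\infty(\wt\nu, \wt\gamma) < h(\wt\nu, \wt\gamma)$, the excess of $\wt\nu$ over $\wt\gamma$ is concentrated in at least one root whose endpoints lie in the same $\sim_\infty$-class; a truncation argument along a finite initial subposet of $\Pieri{\wt\lambda}{\mathbf{F}}$ then provides a submodule separating the two constituents, essentially because finite-dimensional Pieri is semisimple and thus cannot link the corresponding finite-stage summands.

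The main obstacle is the sufficiency direction. Assume $\wt\nu \ggcurly \wt\gamma$ and let $\mathbf{S} \subset \mathbf{M}$ contain $\mathbf{L}(\wt\lambda + \wt\nu)$. Writing $\wt\nu - \wt\gamma = \sum_r (\epsilon_{i_r} - \epsilon_{j_r})$ with each $h_\infty(\epsilon_{i_r}, \epsilon_{j_r}) > 0$, the corresponding intervals in $\JJ$ contain infinitely many $\sim_\infty$-classes. I would then apply the root vectors $E_{j_r, i_r}$ successively to a highest weight vector of $\mathbf{L}(\wt\lambda + \wt\nu) \subset \mathbf{S}$ and project onto the weight component $\wt\lambda + \wt\gamma$. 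Using the explicit description of the transition maps from section 4, the image of this projection at stage $n$ can be written as a sum of Pieri contributions coming from each position within the intervals, and the key calculation is to show that these sums do not degenerate in the limit: precisely because each interval in $\JJ$ passes through infinitely many $\sim_\infty$-classes, the contributions stabilize to a nonzero vector in the weight space $\mathbf{M}_{\wt\lambda + \wt\gamma}$ whose image in every quotient missing $\mathbf{L}(\wt\lambda + \wt\gamma)$ would vanish, a contradiction. This forces a highest weight vector of weight $\wt\lambda + \wt\gamma$ inside $\mathbf{S}$ and thereby establishes the linkage.
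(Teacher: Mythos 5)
Your outline captures the general strategy (exhaust $\mathbf M = \varinjlim L_n \otimes F_n$ and control the transition maps via section 4), but there are genuine gaps in both the constituent identification and the linkage argument, and they are not cosmetic.

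First, the claim ``every simple constituent of $\mathbf{M}$ arises this way, because the finite-stage decompositions exhaust $L_n \otimes F_n$'' is not an argument. Exhaustion of the finite stages does not by itself rule out simple subquotients whose highest weight fails to be $\wt\lambda + \wt\gamma$ for $\wt\gamma \in \Pieri{\wt\lambda}{\mathbf F}$: the paper's own non-example (with $\mathbf F = \bigwedge^A \VV$ and $A$ not $\JJ_\infty$-initial) has exactly such an exhaustion and yet produces a subquotient $\mathbf L(\wt\lambda + \epsilon_B)$ with $\epsilon_B \notin \supp \mathbf F$. What is actually needed is the Separation Lemma \ref{lem:mk-separation}: given $\mathbf T \subsetneq \mathbf S \subset \mathbf M$, there is some $\wt\gamma \in \Pieri{\wt\lambda}{\mathbf F}$ with $\mathbf M(n,\wt\gamma) \subset \mathbf S$ and $\mathbf M(n,\wt\gamma) \cap \mathbf T \subset \mathbf K(n,\wt\gamma)$. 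This relies on the Chasing Proposition \ref{prop:chasing}, which re-routes any chain of nonzero transition maps to one whose weight sequence eventually comes from an element of $\Pieri{\wt\lambda}{\mathbf F}$, and it is precisely there that $\JJ_\infty$-initiality of $A$ is used. You invoke the hypothesis but never use it, which is a sign the argument is incomplete.

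Second, your sufficiency argument for linkage has a structural flaw: $\mathbf L(\wt\lambda + \wt\nu)$ is only a \emph{subquotient} of the submodule $\mathbf S$, so there is no canonical highest weight vector of that isotype sitting inside $\mathbf S$ on which to act by the root vectors $E_{j_r,i_r}$, and the claim that the resulting sums ``stabilize to a nonzero vector in the limit'' is unsubstantiated. The paper avoids this entirely: it works with the cyclic submodules $\mathbf M(n,\wt\gamma)$ and proves (Lemma \ref{lem:ggcurly-M}) that $\wt\nu \ggcurly \wt\gamma$ forces $\mathbf M(m,\wt\nu) \subset \mathbf M(n,\wt\gamma)$ for some $m > n$, by locating an infinite $\JJ$-class $j$ strictly between the relevant $\JJ$-classes of the endpoints and moving deep enough into the exhaustion to apply the transition-map criteria of Propositions \ref{prop:d-nonzero}/\ref{prop:AB-nonzero}; combined with Lemma \ref{lem:support-sets}.(i), this settles sufficiency without any limiting calculation. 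Your necessity argument has a circularity problem as well: using $F_\ll \mathbf M$ as a separating submodule presupposes that $\mathcal L(F_\ll \mathbf M)$ equals the set of weights of class $\leq_\infty \ll$, but that description of the linkage filtration is Theorem C, which is proved \emph{from} Theorem A. The paper instead tests against the specific submodule $\mathbf M(n,\wt\gamma)$ for $n$ large and uses Corollaries \ref{cor:d-nonzero-maps} and \ref{cor:AB-nonzero-maps} to characterize exactly which $\wt\nu$ can occur as constituents of it, forcing $\wt\nu \ggcurly \wt\gamma$.
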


\begin{cor*}[\textbf A]
The length of $\mathbf M$ is equal to the cardinality of $\Pieri{\wt\lambda}
{\mathbf F}$. Moreover, if $\mathbf F =\SS^d\VV, \bigwedge^d \VV$ then the length
of $\mathbf M$ is finite if and only if $\JJ$ is finite, while if $\mathbf F
= \bigwedge^A \VV$ then $\mathbf M$ has finite length if and only if $\mathbf 
L(\wt\lambda)$ is of the form $\mathbf D_n \otimes \mathbf L'$, with $\mathbf 
L'$ a simple tensor module.
\end{cor*}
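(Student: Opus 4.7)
The first sentence of the corollary is immediate from Theorem \textbf A: $\mathbf M$ is multiplicity-free with a composition series whose factors are parametrized by $\Pieri{\wt\lambda}{\mathbf F}$, so the length of $\mathbf M$ equals $|\Pieri{\wt\lambda}{\mathbf F}|$. The remaining task is to decide when this cardinality is finite.

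For $\mathbf F = \SS^d\VV$ or $\bigwedge^d\VV$, the proof of Lemma \ref{lem:Pieri-varia} provides an injection $\wt\gamma = \epsilon_I \mapsto \pi_\JJ(I) \in \JJ^d$, whence $\JJ$ finite implies $|\Pieri{\wt\lambda}{\mathbf F}| \leq |\JJ|^d < \infty$. Conversely, when $\JJ$ is infinite, I would pick any $d$ distinct classes $j_1 \prec \cdots \prec j_d$ in $\JJ$, let $i_k$ be the minimum element of $j_k$ (which exists by condition $(iii)$ of Definition \ref{defn:compatible}), and verify directly from the dominance criterion that $\epsilon_{i_1}+\cdots+\epsilon_{i_d}$ belongs to $\Pieri{\wt\lambda}{\mathbf F}$; varying the chosen $d$-tuple of classes yields infinitely many Pieri elements.

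For $\mathbf F = \bigwedge^A\VV$ I would write a generic Pieri element as $\wt\gamma = \epsilon_A - \epsilon_I + \epsilon_{I'}$ and analyze $\wt\lambda+\wt\gamma$ one $\JJ$-class at a time. In the compatible order the $A$-part of every $j \in \JJ$ precedes the $B$-part, and within-class dominance forces $I \cap A \cap j$ to be a terminal interval of $A \cap j$, $I' \cap B \cap j$ to be an initial interval of $B \cap j$, and at most one of these two sides to be nonempty; since $\wt\lambda$ is integer-valued and strictly decreases across distinct classes, all cross-class transitions hold automatically, so the only remaining constraint is the global balance $|I|=|I'|$.

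If $\mathbf L(\wt\lambda) = \mathbf D_n \otimes \mathbf L'$ with $\mathbf L'$ a tensor module, then $\wt\lambda$ takes finitely many values and the unique infinite $\JJ$-class is $j_* = \wt\lambda^{-1}(n)$, the other classes being finite. The finite classes jointly admit only finitely many configurations $(a_j, b_j) := (|I \cap A \cap j|, |I' \cap B \cap j|)$, and once these are chosen, the conditions $|I|=|I'|$ and $a_{j_*} b_{j_*} = 0$ pin down $(a_{j_*}, b_{j_*})$ uniquely and hence determine the terminal and initial segments inside $j_*$ by their sizes; therefore $|\Pieri{\wt\lambda}{\bigwedge^A\VV}|$ is finite. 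Conversely, if $\mathbf L(\wt\lambda)$ is not of this form then no $\JJ$-class is cofinite, so either $\JJ$ is infinite or at least two $\JJ$-classes are infinite; in either subcase I would exhibit an infinite family of Pieri elements, either via infinitely many single-class or cross-class swaps (when $\JJ$ is infinite, using that $A$ is $\JJ_\infty$-initial while both $A$ and $B$ are infinite), or via a family $\wt\gamma_d = \epsilon_A - \epsilon_{I_d} + \epsilon_{I'_d}$ with unbounded $|I_d|=d$ built from terminal and initial segments in two chosen infinite classes. The main obstacle is the fine within-class analysis near the $A/B$ boundary together with the careful bookkeeping of the global balance; once these are properly set up, the contrast between ``one infinite $\JJ$-class'' (tensor case) and ``more than one'' directly yields the dichotomy between finite and infinite length of $\mathbf M$.
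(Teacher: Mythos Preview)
Your proposal is correct and follows essentially the same approach as the paper. For $\mathbf F = \SS^d\VV, \bigwedge^d\VV$ you use the same injection into $\JJ^d$ from Lemma~\ref{lem:Pieri-varia} and the same construction of Pieri elements from $d$-tuples of distinct classes; for $\mathbf F = \bigwedge^A\VV$ your class-by-class analysis (terminal segment in $A\cap j$, initial segment in $B\cap j$, at most one nonempty, global balance $|I|=|I'|$) is precisely the content of Example~\ref{ex:modules_with_finite_jj_}, which the paper invokes rather than reproving, and your dichotomy ``one cofinite class versus none'' matches the paper's reduction to that example after first showing $\JJ$ must be finite.
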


Recall that a module $\mathbf N$ is said to be \emph{uniform} if any two 
nontrivial submodules have nontrivial intersection. In particular a uniform 
module is indecomposable. 
\begin{thm*}[\textbf B]
The module $\mathbf M$ is either indecomposable or semisimple. More precisely:
\begin{enumerate}
  \item If $\mathbf F$ is a $\lie b$-highest weight module and 
    $\LL_{\infty}$ is trivial, then $\mathbf M$ is semisimple.

  \item If $\mathbf F$ is a $\lie b$-highest weight module and 
    $\LL_{\infty}$ is not trivial, then $\mathbf M$ is indecomposable.

  \item If $\mathbf F$ is not a $\lie b$-highest weight module, then 
    $\mathbf M$ is uniform and has no socle.
\end{enumerate}
\end{thm*}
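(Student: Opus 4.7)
The proof uses Theorem A and the linkage filtration $\mathcal F = \{F_\ll \mathbf M\}_{\ll \in \LL_\infty}$. Since $\mathbf M$ is multiplicity-free, any direct sum decomposition $\mathbf M = \mathbf M_1 \oplus \mathbf M_2$ partitions the simple constituents into two subsets each closed under the linkage relation: if $\mathbf L$ is linked to $\mathbf L'$ then both lie in the same summand. Hence everything reduces to analysing the relation $\ggcurly$ on $\Pieri{\wt\lambda}{\mathbf F}$ together with the existence of simple submodules.

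For \emph{(i)}, triviality of $\LL_\infty$ forces $h_\infty(\wt\nu, \wt\gamma) = 0$ for all $\wt\nu, \wt\gamma \in \Pieri{\wt\lambda}{\mathbf F}$. Since $h(\wt\nu, \wt\gamma) > 0$ whenever $\wt\nu \succ \wt\gamma$ are distinct, the equivalent characterization of $\ggcurly$ as $\wt\nu \succ \wt\gamma$ and $h_\infty = h$ rules out $\wt\nu \ggcurly \wt\gamma$ unless $\wt\nu = \wt\gamma$. By Theorem A no two distinct constituents are linked, so for each $\wt\gamma$ there is a submodule of $\mathbf M$ in which $\mathbf L(\wt\lambda + \wt\gamma)$ appears in isolation, hence embeds as a simple submodule. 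By multiplicity-freeness every constituent appears in $\soc \mathbf M$, and $\mathbf M = \soc \mathbf M$ is semisimple.

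For \emph{(ii)}, let $\wt\nu_0$ denote the highest weight of $\mathbf F$; by $(\wt\lambda, \mathbf F)$-compatibility $\wt\nu_0 \in \Pieri{\wt\lambda}{\mathbf F}$, and its class is the minimum of $\LL_\infty$ in $\leq_\infty$. For each $\wt\gamma \in \Pieri{\wt\lambda}{\mathbf F}$ in a strictly larger $\equiv_\infty$-class I would construct a chain $\wt\nu_0 = \wt\gamma_0 \ggcurly \wt\gamma_1 \ggcurly \cdots \ggcurly \wt\gamma_n = \wt\gamma$ by decomposing $\wt\nu_0 - \wt\gamma$ into positive roots $\epsilon_i - \epsilon_j$ with $h_\infty(\epsilon_i, \epsilon_j) > 0$, exploiting the $\JJ_\infty$-initial hypothesis on $A$ in the Fock case to control the decomposition. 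Since linkage is transitive, by Theorem A $\mathbf L(\wt\lambda + \wt\nu_0)$ is linked to every other simple constituent. In any decomposition $\mathbf M = \mathbf M_1 \oplus \mathbf M_2$ the one-dimensional top weight space $\CC \cdot (v_{\wt\lambda} \otimes e_{\wt\nu_0})$ lies in a single summand, which then contains $\mathbf L(\wt\lambda + \wt\nu_0)$ and thus, via linkage, every other constituent; by multiplicity-freeness the other summand is zero.

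For \emph{(iii)}, $\mathbf F$ has no $\lie b$-highest weight vector, so $\Pieri{\wt\lambda}{\mathbf F}$ has no $\preceq$-maximum and $\LL_\infty$ has no minimum. A simple submodule $\mathbf L(\wt\lambda + \wt\gamma) \subset \mathbf M$ would give a $\lie b$-highest weight vector $w = \sum_k v_k \otimes e_k$ of weight $\wt\lambda + \wt\gamma$; projecting onto the $\preceq$-maximal $\mathbf L(\wt\lambda)$-weight appearing in the expansion and using that no $e_k$ can be a $\lie b$-highest weight vector of $\mathbf F$, the action of suitable positive-root elements yields a contradiction. For uniformity, each nonzero submodule contains a weight vector whose cyclic hull lies in some $F_{\ll} \mathbf M$; iterating the chain-of-linkage argument of case \emph{(ii)} but proceeding downward in $\LL_\infty$ via cofinal descent (which is available since $\LL_\infty$ has no minimum) enables one to exhibit a common simple subquotient shared by any two nonzero submodules, forcing their intersection to be nonzero. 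The hardest part will be case \emph{(iii)}: both the weight-vector argument ruling out simple submodules and the cofinal-descent argument for uniformity require the most delicate manipulation of $\mathcal F$, whereas cases \emph{(i)} and \emph{(ii)} follow fairly directly from Theorem A combined with the linkage filtration.
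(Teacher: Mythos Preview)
Your argument for item \emph{(ii)} contains a genuine gap. You claim that $\mathbf L(\wt\lambda + \wt\nu_0)$ is linked (via $\ggcurly$-chains) to every other simple constituent, but this is false. The minimal $\LL_\infty$-class typically contains several distinct elements $\wt\gamma \neq \wt\nu_0$ (these are precisely the constituents of $\soc \mathbf M$, which need not be simple; see Example~3.4.1). For such $\wt\gamma$ one has $h_\infty(\wt\nu_0,\wt\gamma)=0$ while $h(\wt\nu_0,\wt\gamma)>0$, so $\wt\nu_0 \not\ggcurly \wt\gamma$; and no chain can help, since each nontrivial $\ggcurly$-step strictly increases $h_\infty$, hence cannot begin and end in the same $\LL_\infty$-class. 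Consequently your ``top--down'' linkage argument cannot reach the other socle constituents, and the conclusion that one summand contains them all does not follow.

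The paper's proof of \emph{(ii)} goes in the opposite direction. Given nontrivial $\mathbf S, \mathbf T$ with $\mathbf S+\mathbf T=\mathbf M$, one picks $\wt\gamma\in\mathcal L(\soc\mathbf S)$ and $\wt\nu\in\mathcal L(\soc\mathbf T)$ and, using that $\LL_\infty$ is nontrivial (so there is an infinite class $j\in\JJ$ whose minimum lies strictly above $\supp\wt\gamma\cup\supp\wt\nu$), constructs a weight $\wt\sigma\in\Pieri{\wt\lambda}{\mathbf F}$ with $\wt\gamma\ggcurly\wt\sigma$ and $\wt\nu\ggcurly\wt\sigma$. Since $\wt\sigma$ lies in $\mathcal L(\mathbf S)\cup\mathcal L(\mathbf T)$, upward closure under $\ggcurly$ (Lemma~\ref{lem:support-sets}) forces $\wt\gamma$ or $\wt\nu$ into the other set, giving $\mathbf S\cap\mathbf T\neq 0$. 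The key idea you are missing is this construction of a common $\ggcurly$-descendant rather than a common ancestor.

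Your sketch for \emph{(i)} also elides a nontrivial step: the implication from ``no two constituents are linked'' to ``every constituent is a simple submodule'' requires knowing that each $\ggcurly$-upward-closed subset of $\Pieri{\wt\lambda}{\mathbf F}$ is realised as $\mathcal L(\mathbf S)$ for some $\mathbf S$. The paper instead invokes Proposition~\ref{prop:ss-quotients}\,(ii), whose hypothesis is vacuous when $\ggcurly$ is trivial. Your outline for \emph{(iii)} is too vague to evaluate; the paper's argument is not filtration-theoretic but works directly with the exhaustion, using transition maps at steps with $X_{n+1}=Z_n\sqcup X_n$ to simultaneously shorten the difference $\wt\nu-\wt\gamma$ inside both submodules.
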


\begin{thm*}[\textbf C] 
\begin{enumerate}
  \item The module $\mathbf M$ has an exhaustive socle filtration if and only 
    if $\mathbf F$ is a $\lie b$-highest weight module. If this 
    happens then the socle filtration and the linkage filtration coincide. 

  \item The module $\mathbf M$ has a separated radical filtration if and only 
    if $\LL_\infty$ has a maximum. If this happens then the radical filtration 
    and the linkage filtration coincide. 

  \item The module $\mathbf M$ has finite Loewy length if and only if 
    $\LL_\infty$ is finite, and in this case $\mathcal L\ell(\mathbf M) = 
    \#\LL_\infty - 1$. Furthermore, $\mathbf M$ is rigid.
\end{enumerate}
\end{thm*}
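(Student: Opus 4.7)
The plan is to derive Theorem C from Theorems A and B by identifying each step of both the socle and radical filtrations with a single piece of the linkage filtration. For part (i), the ``only if'' direction is immediate from Theorem B(iii): if $\mathbf F$ is not a $\lie b$-highest weight module then $\soc \mathbf M = 0$, so every term of the socle filtration vanishes and the filtration cannot be exhaustive. Conversely, when $\mathbf F$ is $\lie b$-highest weight, $\LL_\infty$ has a minimum $\ll_0$, namely the class of the highest weight of $\mathbf F$. I would prove by induction on $n \geq 0$ that $\soc^{(n+1)} \mathbf M = F_{\ll_n} \mathbf M$, where $\ll_0 <_\infty \ll_1 <_\infty \cdots$ enumerates $\LL_\infty$. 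For the base case, by Theorem A a simple constituent $\mathbf L(\wt\lambda + \wt\gamma)$ embeds in $\mathbf M$ iff no other constituent is linked to it, equivalently iff $\wt\gamma$ is $\ggcurly$-minimal in $\Pieri{\wt\lambda}{\mathbf F}$; one then checks that these minimal elements are exactly the weights in the class $\ll_0$. The inductive step passes to $\mathbf M / F_{\ll_{n-1}} \mathbf M$ and applies the same argument to the reduced Pieri set. Exhaustiveness follows because $\mathbf M$ is generated over $\gl(\infty)$ by $v_{\wt\lambda} \otimes \mathbf F$, and this generating set is covered by $\bigcup_\ll F_\ll \mathbf M$.

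Part (ii) is proved dually. If $\LL_\infty$ has no maximum, a dual version of Theorem B(iii) shows $\mathbf M$ admits no simple quotient, so $\rad \mathbf M = \mathbf M$ and the radical filtration is stationary, hence not separated. If $\LL_\infty$ has a maximum $\ll_{\max}$, the maximal semisimple quotient $\mathbf M / \rad \mathbf M$ corresponds via linkage to the $\ggcurly$-maximal constituents of $\Pieri{\wt\lambda}{\mathbf F}$, i.e., to the top layer of the linkage filtration, giving $\rad \mathbf M = F_{\ll_{\max} - 1} \mathbf M$; iterating yields $\rad^{(i)} \mathbf M = F_{\ll_{\max} - i} \mathbf M$. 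Separatedness follows since every weight vector of $\mathbf M$ lies in $F_\ll \mathbf M$ for some $\ll$, and therefore falls out of $F_{\ll_{\max} - i} \mathbf M$ for sufficiently large $i$, so $\bigcap_i \rad^{(i)} \mathbf M = 0$.

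For part (iii), by the classical result recalled in section 2.7, finite Loewy length is equivalent to having both a finite exhaustive socle filtration and a finite separated radical filtration, in which case both filtrations coincide. By (i) and (ii) this happens iff $\mathbf F$ is $\lie b$-highest weight (so $\LL_\infty$ has a minimum) and $\LL_\infty$ has a maximum; since $\LL_\infty$ is order-isomorphic to a subset of $\ZZ$, having both bounds is equivalent to $\LL_\infty$ being finite. In that case the socle, radical and linkage filtrations all coincide, yielding both rigidity and the length formula $\mathcal L\ell(\mathbf M) = \#\LL_\infty - 1$. The principal technical obstacle will be the socle identification $\soc \mathbf M = F_{\ll_0} \mathbf M$ in part (i): the generators $\{v_{\wt\lambda} \otimes e_{\wt\gamma}\}_{\wt\gamma \in \ll_0}$ need not themselves be highest weight vectors, and verifying that they generate exactly the expected semisimple submodule rather than something strictly larger will rely on the explicit analysis of the transition maps $L_n \otimes F_n \to L_{n+1} \otimes F_{n+1}$ developed in sections 4 and 5, together with the $\JJ_\infty$-initial hypothesis on $A$ in the Fock case.
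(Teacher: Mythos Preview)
Your overall strategy—matching the socle and radical filtrations to the linkage filtration layer by layer—is precisely what the paper does, and your derivation of part (iii) from (i) and (ii) is correct. However, two steps in your plan do not go through as written.

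In part (i), your identification of the socle via $\ggcurly$-minimality is backwards. A simple constituent $\mathbf L(\wt\lambda+\wt\gamma)$ embeds in $\mathbf M$ exactly when it is linked to nothing else (not when nothing else is linked to it), and by the direction established in Lemma~\ref{lem:support-sets}(i)—namely $\wt\gamma\in\mathcal L(\mathbf S)$ and $\wt\nu\ggcurly\wt\gamma$ force $\wt\nu\in\mathcal L(\mathbf S)$—this means $\wt\gamma$ is $\ggcurly$-\emph{maximal}, which one then checks coincides with $\wt\gamma\in\ll_0$. More seriously, Theorem~A alone gives only the implication ``embeds $\Rightarrow$ $\ggcurly$-maximal''; the converse, i.e.\ that every $\wt\gamma\in\ll_0$ actually produces a simple submodule, is exactly the assertion that $F_{\ll_0}\mathbf M$ is semisimple. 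This is Proposition~\ref{prop:ss-quotients}(ii), which genuinely needs the $\lie b$-highest weight hypothesis and the transition-map analysis you allude to. The paper packages both directions into Lemma~\ref{lem:rad-soc}, and your inductive step requires that lemma rather than the bare linkage statement.

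In part (ii), there is no ``dual Theorem B(iii)''. Theorem~B(iii) concerns the failure of $\mathbf F$ to be $\lie b$-highest weight, which controls whether $\LL_\infty$ has a \emph{minimum}; it says nothing about the maximum. For instance, in Example~3.4.3 the module $\mathbf F$ is $\lie b$-highest weight yet $\LL_\infty\cong\ZZ_{>0}$ has no maximum. The argument the paper actually uses (via Lemma~\ref{lem:rad-soc} and Proposition~\ref{prop:ss-quotients}(i)) is that any simple quotient $\mathbf M\twoheadrightarrow\mathbf L(\wt\lambda+\wt\gamma)$ forces the class of $\wt\gamma$ to be maximal in $\LL_\infty$; hence if $\LL_\infty$ has no maximum then $\rad\mathbf M=\mathbf M$. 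Once this is in place, your descending induction $\rad^{(i)}\mathbf M = F_{\ll_{\max}-i}\mathbf M$ is correct, again by Lemma~\ref{lem:rad-soc}.
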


We say that a linear order $\preceq$ on $\II$ is $(\wt\lambda, \mathbf 
F_*)$-compatible if it is $(-\wt\lambda,\mathbf F)$-compatible. We also put
$\Pieri{\wt\lambda}{\mathbf F_*} = -\Pieri{-\wt\lambda}{\mathbf F}$, and define
$\LL_\infty$ as before.

\begin{thm*}[\textbf D]
Let $\mathbf F = \SS^d \VV$ or $\mathbf F = \bigwedge^d \VV$, and
let $\mathbf M' := \mathbf L_{\lie b}(\wt\lambda) \otimes \mathbf 
F_*$ for a Borel subalgebra arising from a $(\wt\lambda, \mathbf 
F_*)$-compatible linear order $\preceq$ on $\II$. Then all above statements 
hold for $\mathbf M'$ if one replaces in them $\mathbf F$ with $\mathbf F_*$.
\end{thm*}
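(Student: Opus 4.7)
The natural plan is to deduce Theorem D from Theorems A--C by twisting with the outer automorphism $\tau$ of $\gl(\infty)$ introduced in Section 2, whose twist-functor $\mathbf N \mapsto \mathbf N_*$ acts as $-\Id$ on weights. This twist is an exact auto-equivalence of the category of $\gl(\infty)$-modules, commutes with tensor products, sends $\SS^d\VV_*$ to $\SS^d\VV$ and $\bigwedge^d\VV_*$ to $\bigwedge^d\VV$, and maps a simple integrable highest weight module $\mathbf L_{\lie b'}(\wt\mu)$ to $\mathbf L_{(\lie b')^{\op}}(-\wt\mu)$. In particular it preserves composition series, multiplicities, (in)decomposability, semisimplicity, uniformness, and both the socle and radical filtrations.

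By $(\wt\lambda, \mathbf F_*)$-compatibility the order $\preceq$ is $(-\wt\lambda, \mathbf F)$-compatible, so $-\wt\lambda$ is $\lie b_{\preceq}$-dominant while $\wt\lambda$ is $\lie b_{\preceq^{\op}}$-dominant; the Borel subalgebra $\lie b$ appearing in the statement is accordingly $\lie b_{\preceq^{\op}}$. Twisting yields
\[
  (\mathbf M')_* \;=\; \mathbf L_{\lie b_{\preceq^{\op}}}(\wt\lambda)_* \otimes (\mathbf F_*)_* \;=\; \mathbf L_{\lie b_{\preceq}}(-\wt\lambda) \otimes \mathbf F,
\]
a module of exactly the form to which Theorems A--C apply, with weight $-\wt\lambda$, module $\mathbf F$, and order $\preceq$. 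All structural assertions about $\mathbf M'$ (multiplicity freeness, existence of a composition series, the dichotomy of Theorem B, finiteness criteria for the socle and radical filtrations, Loewy length, rigidity) then follow at once from the corresponding assertions for $(\mathbf M')_*$, since each such property is preserved by the auto-equivalence.

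The bulk of the work is the combinatorial dictionary. The sign reversal $\wt\gamma \mapsto -\wt\gamma$ gives the bijection $\Pieri{-\wt\lambda}{\mathbf F} \to \Pieri{\wt\lambda}{\mathbf F_*}$ built into the definition $\Pieri{\wt\lambda}{\mathbf F_*} = -\Pieri{-\wt\lambda}{\mathbf F}$, and carries each constituent $\mathbf L_{\lie b_{\preceq}}(-\wt\lambda + \wt\gamma)$ of $(\mathbf M')_*$ to the constituent $\mathbf L_{\lie b_{\preceq^{\op}}}(\wt\lambda + \wt\nu)$ of $\mathbf M'$, with $\wt\nu = -\wt\gamma$. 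The equivalence classes $\JJ = \II/\sim$ and $\JJ_\infty$ depend only on the fibres of the weight function and are unchanged under $\wt\lambda \mapsto -\wt\lambda$; the functions $h$ and $h_\infty$ depend only on the cardinalities of intervals and agree in the two settings. What does change is the linear order on $\JJ$, $\JJ_\infty$, and the Pieri posets, which is reversed, and this reversal is precisely the adjustment needed for the relation $\ggcurly$ and the order on $\LL_\infty$ in the $(\wt\lambda, \mathbf F_*)$ setting to correspond to their analogues in the $(-\wt\lambda, \mathbf F)$ setting under the bijection of Pieri sets. With this translation the linkage condition of Theorem A and the definition of the linkage filtration of Theorem C transfer verbatim, and the linkage filtration of $\mathbf M'$ is matched with the $\tau$-twist of the linkage filtration of $(\mathbf M')_*$.

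The main obstacle I anticipate is purely notational: carefully tracking signs and order reversals so that the linkage condition of Theorem A, the order on $\LL_\infty$, and the indexing of the linkage filtration translate cleanly, and checking that our identification of the Borel $\lie b$ in the statement of Theorem D with $\lie b_{\preceq^{\op}}$ is the only reading that makes $\wt\lambda$ genuinely dominant. The categorical equivalence does all the representation-theoretic work, and no additional hypothesis (such as $\JJ_\infty$-initiality) is required because $\mathbf F$ is a symmetric or exterior power of $\VV$ rather than a Fock module.
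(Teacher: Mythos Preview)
Your approach is the same as the paper's: both deduce Theorem D from Theorems A--C by applying the twist auto-equivalence $\mathbf N \mapsto \mathbf N_*$, using $\mathbf L(\wt\lambda)\otimes\mathbf F_*\cong(\mathbf L(-\wt\lambda)\otimes\mathbf F)_*$ and the fact that $(\wt\lambda,\mathbf F_*)$-compatibility is by definition $(-\wt\lambda,\mathbf F)$-compatibility. The paper's proof is a single paragraph that invokes the auto-equivalence without spelling out the combinatorial dictionary you describe; your more explicit tracking of how $\JJ$, $\JJ_\infty$, $h$, $h_\infty$, $\ggcurly$, and $\LL_\infty$ transform under sign reversal is a welcome elaboration but not a different method.
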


Before we proceed to the proofs we illustrate the above theorems with examples.

\subsection{Examples}
In 3.4.1--3.4.4 the module $\mathbf F$ equals $\SS^d\VV$ or $\bigwedge^d \VV$ 
for a fixed $d \in \ZZ_{>0}$.

\subsubsection{Tensor modules}
Let $\II = \ZZ_{>0} \sqcup \ZZ_{<0}$, and $\wt\lambda = \sum_{i \in \II} 
\wt\lambda_i \epsilon_i$ with $\wt\lambda_i = 0$ for all $i \in \interval{n+1,
-m-1}$ with $n,m \in \ZZ_{\geq0}$. Then $\lambda = (\wt\lambda_1, \ldots, 
\wt\lambda_n)$ and $\mu = (-\wt\lambda_{-1}, \ldots, -\wt\lambda_{-m})$ are 
nonempty partitions, and $\mathbf L(\wt\lambda)$ is isomorphic to the tensor 
module $\mathbf V^{\lambda;\mu}$ from \cite{PS11b}. The set $\JJ$ is a finite 
set of cardinality at most $n+1+m$, and $\JJ_{\infty}$ has exactly two elements.
The module $\mathbf M$ is semisimple if and only if $\mu = \emptyset$ and
simple if and only if $\mathbf L(\wt\lambda) = \CC$.

If $\mathbf F = \SS^d \VV$ then $\mathbf F$ has highest weight $d \epsilon_1$, 
and $\epsilon_I \in \supp \mathbf F$ belongs to $\Pieri{\wt\lambda}{\mathbf F}$ 
if and only if $I \cap j = \{i_1 \preceq \cdots \preceq i_r\}$ with $i_r = \min 
j$ and $r \in 
\ZZ_{\geq 0}$. If $\mathbf F = \bigwedge^d \VV$ its highest weight is $\epsilon_1 + \cdots + \epsilon_d$, and $\epsilon_I \in \supp \mathbf F$ belongs to 
$\Pieri{\wt\lambda}{\mathbf F}$ if and only if for each $j \in \JJ$ the $I \cap 
j \subset j$ is an initial subset. 

Set $q = \min\{\sum_{i=1}^m \lambda_{-i} - \lambda_{-i-1}, d\}$ when $\mathbf F 
= \SS^d \VV$ and $q = \min\{-\sum_{i=1}^m \lambda_{-i},d\}$ when $\mathbf F = 
\bigwedge^d \VV$. Then the set $\LL_\infty$ is isomorphic to $\interval{0,q}$, 
and through this identification the class of $\epsilon_I \in \Pieri{\wt\lambda}
{\mathbf F}$ in $\LL_\infty$ is $\# (I \cap \ZZ_{<0})$. In particular the Loewy 
length of $\mathbf M$ is $q$ and $\mathbf M$ is rigid. The $r$-th layer of the 
socle filtration is isomorphic to the direct sum of all $\mathbf L(\wt\lambda + 
\epsilon_I)$ with $\# I\cap \ZZ_{<0} = r$, and $\mathbf L(\wt\lambda + 
\epsilon_I)$ is linked to $\mathbf L(\wt\lambda + \epsilon_J)$ if and only if 
$I \setminus J \subset \ZZ_{>0}$ and $J \setminus I \subset \ZZ_{<0}$. In 
particular $\soc \mathbf M$ is simple if and only if $\lambda = \emptyset$.

\subsubsection{The case $\II \subset \ZZ$}
Suppose $\II = \ZZ_{>0}$. Then $\wt\lambda = \sum_{i=1}^\infty \wt\lambda_i 
\epsilon_i$ with $\wt\lambda_i \geq \wt\lambda_{i+1}$. Thus $\JJ$ is order 
isomorphic to a subset of $\ZZ_{>0}$. Moreover $\JJ_\infty$ has a single 
element, so  $\mathbf M$ is semisimple. The module $\mathbf M$ has finite 
length if and only if $\JJ$ is finite, i.e., if and only if $\wt\lambda_i = a$ 
for some $a \in \ZZ$ and all $i \gg 0$. Suppose now that $\II = \ZZ_{<0}$ or 
$\II = \ZZ$. Then $\JJ$ cannot have a minimal element and the module 
$\mathbf M$ is uniform, has infinite length and its socle is trivial. However, 
every nontrivial quotient of $\mathbf M$ is semisimple. All such quotients have 
finite length if $\JJ$ has a maximum, and have infinite length if $\JJ$ is does 
not have a maximum.

\subsubsection{Infinite staircase module}
In this example we take $\II = \bigsqcup_{n \in \ZZ_{> 0}} \ZZ_{\geq 0}$ and 
$\wt \lambda_{k,n} = -n$. Then $\JJ = \JJ_{\infty} \cong \ZZ_{>0}$. For 
$\mathbf F = \SS^d\VV$ the poset $\Pieri{\wt\lambda}{\mathbf F}$ is 
isomorphic to the poset of sequences $(i_{1}, i_{k+2}, \ldots, i_{d})$ 
such that $1 = i_1 = \cdots = i_k < i_{k+1} < i_{k+2} < \cdots < i_d$ for some 
$k \leq d$, with the lexicographic order. The correspondence assigns to each 
sequence the weight $k \epsilon_{0,1} + \epsilon_{0,i_{k+1}} + \cdots + 
\epsilon_{0,i_{d}}$. For $\mathbf F = \bigwedge^d \VV$ the poset 
$\Pieri{\wt\lambda}{\mathbf F}$ is isomorphic to the poset of all 
non-decreasing sequences $(i_1, \ldots, i_d) \in \ZZ_{\geq 0}^d$ with the 
lexicographic order, and the correspondence assigns to a sequence the weight
$\epsilon_{0,i_1} + \cdots + \epsilon_{0,i_d}$.

In both cases $\Pieri{\wt\lambda}{\mathbf F}$ has a minimum, namely 
$(1,1,\ldots, 1)$. In addition, $\Pieri{\wt\lambda}{\mathbf F}$ is a lattice 
and every maximal chain ending at a sequence has the same length, so the poset
$\Pieri{\wt\lambda}{\mathbf F}$ is ranked; the rank function coincides with 
$\wt\gamma \mapsto h_\infty(\wt\mu, \wt\gamma)$, where $\wt\mu$ is the highest 
weight of $\mathbf F$. The linkage filtration on $\mathbf M$
is thus infinite and coincides with the socle filtration. A simple $\mathbf 
L(\wt\lambda + \wt\gamma)$ appears in the $r$-th layer of the filtration if and 
only if $r$ is the rank of the sequence corresponding to $\wt\gamma$. In 
particular the socle of $\mathbf M$ is simple and isomorphic to $\mathbf 
L(\wt\lambda + d \epsilon_{0,1})$ for $\mathbf F = \SS^d\VV$ and to $\mathbf 
L(\wt\lambda + \epsilon_{0,1} + \cdots + \epsilon_{0,d})$ for $\mathbf F = 
\bigwedge^d \VV$. Since the poset $\Pieri{\wt\lambda}{\mathbf F}$ has finitely 
many elements of a fixed rank, each layer of the socle filtration has finitely 
many simple summands.

\subsubsection{Double-infinite staircase module}
Take $\II = \bigsqcup_{n \in \ZZ} \ZZ_{>0}$ and $\wt \lambda_{k,n} = -n$. Then 
$\JJ = \JJ_{\infty} \cong \ZZ$. Here $\Pieri{\wt\lambda}{\mathbf F}$ can be 
identified with the poset of strictly increasing sequences of integers of length 
$d$ when $\mathbf F = \SS^d\VV$, and with the poset of non-decreasing sequences 
of integers of length $d$ when $\mathbf F =\bigwedge^d \VV$. In both cases the 
poset is locally ranked so we can define a rank function on $\Pieri{\wt\lambda}
{\mathbf F}$ taking values in $\ZZ$. The socle and radical filtration of 
$\mathbf M$ are trivial. However, the linkage filtration has semisimple layers, 
and two simples appear in the same layer if and only if the rank of the 
corresponding sequences is equal. This implies in particular that every layer 
has infinitely many simple summands.

\subsubsection{Dual Fock modules}
Let $\mathbf F = \bigwedge^A \VV$ and $\wt\lambda = -\epsilon_C$ for infinite 
sets $A, C \subset \II$, and put $B = \II \setminus A, D = \II \setminus C$. To 
describe the $(\wt\lambda,\mathbf F)$-compatible order on $\II$ we 
let $W = A \cap D, X = B \cap D, Y = A \cap C$ and $Z = B \cap C$. By the 
definition of a $(\wt\lambda,\mathbf F)$-compatible order, $X$ and $Z$ are 
isomorphic to final subsets of $\ZZ_{<0}$, $Y$ and $W$ are isomorphic to 
initial subsets of $\ZZ_{\geq 0}$, and $\II = W \sqcup X \sqcup Y \sqcup Z$. 

Identifying $X \sqcup Y$ and $Z \sqcup W$ with the corresponding subsets of 
$\ZZ$, we write the elements of $\II$ as $(r,i)$ with $r \in \ZZ$ and $i =0,1$. 
The simple constituents of $\mathbf M = \mathbf L(\wt\lambda) \otimes \mathbf 
F$ have highest weights $\wt\mu_r$, where
\begin{align*}
  \wt\mu_r = \sum_{i \geq r} \epsilon_{(-i,0)} -  \epsilon_{(i,1)}
\end{align*}
for all $r$ such that this sum makes sense. For example, if $r = 1$ we recover
the simple constituent with highest weight $\epsilon_A - \epsilon_C$. The module 
$\mathbf M$ always has infinite length, and is semisimple if and only if $\# Y 
\sqcup Z < \infty$. If $\# Y \sqcup Z = \infty$ then $\LL_\infty$ can be 
identified as an ordered set with $\{r \in \ZZ \mid \wt\mu_r \in 
\Pieri{\wt\lambda}{\mathbf F}\}$, and the $r$-th layer of the linkage 
filtration is isomorphic to $\mathbf L(\wt\mu_r)$. In the case 
where $\# A \triangle C < \infty$, the existence of the linkage filtration and 
the description of its layers are due to Serganova, see 
\cite{Serganova21}*{Theorem 1.1}.

\subsubsection{Modules with finite $\JJ$}
\label{ex:modules_with_finite_jj_}
Set again $\mathbf F = \bigwedge^A \VV$ and let us consider the more general 
case where $\wt\lambda$ takes finitely many different values. This is 
equivalent to $\JJ$ being finite and hence $\JJ_\infty$ being also finite. 
Suppose $\JJ_{\infty} = \{j_1 \prec j_2  \prec \cdots \prec j_r\}$, and let 
$A_k = A \cap j_k, B_k = B \cap j_k$. We can always assume that $A_k, B_k$ is 
either infinite or empty. The definition of a $(\wt\lambda, 
\mathbf F)$-compatible order shows that $j_k = A_k \sqcup B_k$. The set 
$\Pieri{\wt\lambda}{\mathbf F}$ consists of those weights $\epsilon_A + 
\epsilon_I - \epsilon_J$ such that $J \cap A_k$ is a terminal subset of $A_k$, 
$I \cap B_k$ is an initial subset of $B_k$, and for each $k$ at least one of $J 
\cap A_k$ and $I \cap B_k$ is empty. 

The module $\mathbf M$ has a socle if and only if $\mathbf F$ is a $\lie 
b$-highest weight module, i.e., if and only if $A$ is an initial subset of 
$\II$. In this case the linkage filtration coincides with the socle filtration 
of $\mathbf M$. 

Suppose at least two elements of $\JJ$ are infinite classes. Then there exist 
two distinct infinite classes $j,j' \in \JJ$ such that $A \cap j$ and $B \cap 
j'$ are also infinite; for simplicity we assume that $j \prec j'$. If $I$ is a 
terminal subset of $A \cap j$ and $J$ is an initial subset of $B \cap j'$ then 
$\epsilon_{A \setminus I \cup J} \in \Pieri{\wt\lambda}{\mathbf F}$ and
\[
  h_\infty(\epsilon_A, \epsilon_A - \epsilon_I + \epsilon_J)
    = h_\infty(\epsilon_I,\epsilon_J)
    = (\# I) (\# \interval{j,j'})
\] 
Thus if $\JJ$ has two infinite classes the length of $\mathbf M$ and its Loewy
length are both infinite. 

On the other hand if $\JJ$ has exactly one infinite class then $\mathbf 
L(\wt\lambda)$ is of the form $\mathbf D_n \otimes \mathbf V^{\lambda, 
\mu}$ with $\lambda, \mu$ finite partitions. In this case we can assume $\II  
= \interval{r} \sqcup \ZZ \sqcup \interval{s}$, where $r$ is the number of 
distinct entries of $\lambda$ and $s$ the number of distinct entries in $\mu$.
Up to finite exchanges, the only infinite subsets of $\II$ with infinite 
complement are either initial or terminal. Since the order on $\II$ is $(\wt
\lambda, \mathbf F)$-compatible it follows that $A$ is an initial subset and 
$\mathbf F$ is a highest weight module. Without loss of generality we can 
assume $A = \interval{r} \cup \ZZ_{\leq 0}$.
The set $\LL_\infty$ is isomorphic to $\interval{0,r+s}$, with the class of a 
weight $\epsilon_I$ equal to $2 \#(I \cap \interval s) + \max I \cap \ZZ$. The 
socle of $\mathbf M$ is simple and isomorphic to $\mathbf L(\wt\lambda + 
\epsilon_A)$, and the radical of $\mathbf M$ is also simple and isomorphic to 
$\mathbf L(\wt\lambda + \epsilon_{A'})$ with $A' = \ZZ_{\leq r-s} \cup 
\interval s$.

\subsubsection{A non-example}
We finish this section with an example in which our results do not apply. Let 
$\II = \ZZ_{>0}, \wt\lambda = \sum_{i=1}^\infty (-i)\epsilon_i$ and $\mathbf F
= \bigwedge^A \VV$ with $A = 2\ZZ_{>0}$. Then $\mathbf F$ is not a $\lie 
b$-highest weight module since $\II$ has no infinite initial subsets 
other than itself. Also $\JJ \cong \ZZ_{>0}$ and $\# \JJ_\infty = 1$ so $A$
is not $\JJ_\infty$-initial. As we show in \ref{ex:non-ex} and
\ref{ex:non-example-bis} below, the module $\mathbf M$ has a 
simple subquotient isomorphic to $\mathbf L(\wt\lambda + \epsilon_B)$, and 
since $\epsilon_B$ is not in the support of $\mathbf F$ Theorem $A$ does not 
hold.

\bigskip

The rest of the paper is devoted to proving of Theorems A--D.


\section{Transition maps}
We start the build-up for the proofs by addressing some key issues in the 
finite-dimensional theory.
Given a finite set $Y \subset \II$ we set $V_Y := \vspan{e_i \mid i \in Y}, 
V_Y^* := \vspan{e_i^* \mid i \in Y}$. Then $\gl(Y) := V_Y^* \otimes V_Y 
\subset \gl(\infty)$ is isomorphic to $\gl(\# Y)$. For a subspace $W \subset 
\gl(\infty)$ we write $W_Y = W \cap \gl(Y)$. In particular, $\lie h_Y$ is a 
Cartan subalgebra of $\gl(Y)$ and all weights $\lambda \in \lie h^*_Y$ are 
assumed to be integral. We will assume that $\II$ has a total order 
$\preceq$ so $\lie b_Y = \lie b_{\preceq} \cap \gl(Y)$ is a Borel subalgebra 
of $\gl(Y)$, and we will refer to $\lie b_Y$-dominant weights simply as 
dominant weights. 

\subsection{Weight multiplicity-free representations and Pieri rules}
Let $Y \subset \II$ be a finite subset. It is well known that the modules 
$\SS^dV_Y$ with $d \geq 0$ and $\bigwedge^d V_Y$ with $0 \leq d \leq \# Y$ are 
weight multiplicity-free, and every other weight multiplicity free 
$\gl(Y)$-module can be obtained by dualizing and tensoring these modules with 
one-dimensional modules. In what follows, a skew Young diagram is called a 
vertical strip if it contains at most one box in each row, and a horizontal 
strip if it contains at most one box in each column.

Let $\lambda = \sum_{i \in Y} \lambda_i \epsilon_i \in \lie h_Y^*$ be a 
dominant weight, and let $F = \SS^dV_Y$ or $F = \bigwedge^d V_Y$. 
The tensor product $L_Y (\lambda) \otimes F$ is semisimple and multiplicity 
free, i.e., every simple submodule appears exactly once in its direct sum 
decomposition. We denote by $\Pieri{\lambda}{F}$ the set of weights $\gamma \in 
\supp F$ such that $\lambda + \gamma$ is a highest weight in $L_Y(\lambda) 
\otimes F$. In this language, the classical Pieri rules \cite{FH91}*{
formulas (6.8) and (6.9)} state that $\gamma \in \Pieri{\lambda}{\bigwedge^d 
V_Y}$ if and only if $\lambda + \gamma$ is dominant, while $\gamma \in 
\Pieri{\lambda}{\SS^dV_Y}$ if and only if $\gamma_{i+1} \leq 
\lambda_i - \lambda_{i+1}$. If $\lambda_i \geq 0$ for all $i \in Y$ then 
$\lambda$ can be read as a partition with $\# Y$ parts, which we identify it 
with its Young diagram. Moreover $\gamma \in \supp F$ belongs to 
$\Pieri{\lambda}{F}$ if and only if the skew diagram $(\lambda + \gamma) / 
\lambda$ is a horizontal strip for $F = \SS^dV_Y$, or a vertical strip 
for $F = \bigwedge^d V_Y$ (see for example \cite{Fulton97}*{section 2.2 (4) and 
(5)}). In both cases, given $\nu,\gamma \in \Pieri{\lambda}{F}$ we have $\nu 
\succ \gamma$ if and only if the $k$-th box of the strip $(\lambda + \nu) 
/\lambda$ is located above and to the right of the $k$-th box of the strip 
$(\lambda + \gamma) / \lambda$ for all $k \in \interval d$. 

\begin{lem}
\label{lem:pieri-lattice}
Let $F$ be a simple weight multiplicity-free $\gl(Y)$-module and $\lambda$ be a
dominant weight. 
\begin{enumerate}
  \item The poset $\Pieri{\lambda}{F}$ is a lattice.

  \item For every $\gamma \in \supp F$ the set $\{\mu \in \Pieri{\lambda}{F} 
    \mid \mu \succeq \gamma\}$ has a least element, which we denote $\gamma'$.

  \item Assume $\gamma \in \Pieri{\lambda}{F}$ and $\nu \in \supp F$ are such 
    that $\nu = \gamma + \epsilon_I - \epsilon_J$ and $\max I \prec \min J$. 
    Then $\nu' = \gamma + \epsilon_{I'} - \epsilon_{J'}$ and $\max I' \preceq 
    \max I \prec \min J \preceq \min J'$.
\end{enumerate}
\end{lem}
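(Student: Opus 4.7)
The plan is to identify each Pieri set $\Pieri{\lambda}{F}$ with a concrete poset of tuples in $Y$ and to handle all three parts via explicit combinatorial manipulations. Write each $\gamma \in \supp F$ as $\gamma = \epsilon_K$ with $K = (k_1 \preceq \cdots \preceq k_d)$ a weakly (resp.\ strictly) increasing tuple in $Y$ for $F = \SS^d V_Y$ (resp.\ $F = \bigwedge^d V_Y$). Under this encoding the $\lie h^*$-order on $\supp F$ becomes the componentwise opposite of the natural order on tuples: $\epsilon_K \preceq \epsilon_{K'}$ iff $k'_r \preceq k_r$ for every $r$, and the Pieri condition translates into the horizontal- or vertical-strip condition recalled just above the lemma.

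I would prove (ii) first, as it is the combinatorial heart of the lemma. Given $\gamma = \epsilon_K \in \supp F$, construct $\gamma'$ by a greedy \emph{leftward rectification}: locate a failing position of the Pieri inequality, and replace the offending entry by the nearest legal position strictly to its left, shifting the minimal amount needed to preserve admissibility of the tuple; iterate until no failures remain. Each step strictly decreases one entry in $\preceq$, so the algorithm terminates, and the output $\gamma'$ lies in $\Pieri{\lambda}{F}$ with $\gamma' \succeq \gamma$ by construction. Minimality is proved by induction on the number of rectification steps: at every stage, the Pieri inequality itself forces any $\mu \in \Pieri{\lambda}{F}$ with $\mu \succeq \gamma$ to have tuple dominated componentwise by the current intermediate tuple, so $\gamma'$ is the least such upper bound.

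For (i), deduce the lattice property from (ii) and the finiteness of $\Pieri{\lambda}{F}$. Given $\gamma_1,\gamma_2 \in \Pieri{\lambda}{F}$, the componentwise minimum (in the order of $Y$) of their tuples is again weakly/strictly increasing and thus defines an element $\nu \in \supp F$; this $\nu$ is the $\preceq$-join of $\gamma_1,\gamma_2$ in $\lie h^*$, and (ii) applied to $\nu$ produces the least Pieri element $\succeq \gamma_1,\gamma_2$, i.e.\ their join in $\Pieri{\lambda}{F}$. Meets are constructed by a symmetric \emph{rightward rectification} applied to the componentwise maximum of the tuples, yielding the lattice structure.

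For (iii), rerun the algorithm of (ii) on $\nu = \gamma + \epsilon_I - \epsilon_J$ and analyze which positions are affected by each shift. An elementary leftward shift from position $p+1$ to $p$ can modify the sets $I, J$ only at $p$ and $p+1$; a case analysis (according to whether $p \in J$ or not, and whether $p+1 \in I$ or not) shows that at each step any newly created ``addition'' is at a position $\preceq$ the current $\max I^{(k)}$ and any newly created ``removal'' is at a position $\succeq$ the current $\min J^{(k)}$, using the fact that $\gamma$ is already Pieri to rule out spurious cases. Induction on the number of steps then gives $\nu' = \gamma + \epsilon_{I'} - \epsilon_{J'}$ with $\max I' \preceq \max I$ and $\min J \preceq \min J'$. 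The hardest step is this case analysis in the exterior setting, where the Pieri inequalities couple $\gamma_i$ with $\gamma_{i+1}$ so that removals in $J$ can create failures just to their right; verifying that the ensuing leftward shifts neither introduce additions past $\max I$ nor removals before $\min J$ is the most technical ingredient, and must be threaded carefully through the inductive minimality argument of (ii).
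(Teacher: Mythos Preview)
Your approach is sound and would work, but it diverges from the paper's proof in an instructive way. The paper proves (i) first, constructing \emph{meets} directly by superposing the two strips $(\lambda+\nu)/\lambda$ and $(\lambda+\gamma)/\lambda$ and keeping the lowest $d$ boxes; since the highest weight of $F$ is always a maximum of $\Pieri{\lambda}{F}$, Stanley's criterion then gives a lattice. Item (ii) is then a one-line corollary of the lattice property (the desired $\gamma'$ is the meet of the finite set $\{\mu \in \Pieri{\lambda}{F} \mid \mu \succeq \gamma\}$), avoiding any algorithm. Your order is reversed: you prove (ii) constructively and then try to extract (i). This is fine for joins, but your ``symmetric rightward rectification'' for meets is not justified in the sketch and is not quite symmetric, since entries can run out of room on the right in $Y$; you would be better served by noting that once (ii) gives all joins, the existence of a global maximum finishes the lattice claim via the standard fact that a finite join-semilattice with a top element is a lattice (wait: one needs a \emph{bottom} for that direction --- so either exhibit the minimum of $\Pieri{\lambda}{F}$ explicitly, or switch to building meets as the paper does).

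For (iii) the contrast is sharper. The paper proceeds by induction on $r = \#I = \#J$, peeling off one pair $(\epsilon_{i_r}, \epsilon_{j_r})$ at a time and invoking (ii) twice in the base case (once to slide the removed box, once to slide the added box). This decouples the problem from any particular rectification procedure and keeps the argument to a few lines. Your plan of tracking elementary shifts through the full algorithm forces the case analysis you flag as ``the most technical ingredient''; it works, but the paper's induction on $r$ is both shorter and conceptually cleaner, and you might consider adopting it even within your algorithmic framework.
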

\begin{proof}
It is enough to prove the results when $\lambda_i \geq 0$ and $F$ equals 
$\SS^d V_Y$ or $\bigwedge^d V_Y$. 

Let $\nu, \gamma \in \Pieri{\lambda}{F}$. The skew diagram obtained by
superimposing the strips $(\lambda + \nu)/\lambda$ and $(\lambda + \gamma) / 
\lambda$ is again a horizontal or vertical strip, with at most $2d$ boxes. We 
let $\tilde \mu$ to be the strip obtained by taking the first $d$ boxes in this 
superposition, counting from bottom to top and from right to left. Then 
$\lambda \cup \tilde\mu = \lambda + \mu$ for some $\mu \in \Pieri{\lambda}
{F_Y}$, and by construction any vertical or horizontal strip $(\lambda + 
\sigma)/\sigma$ will have its $k$-th box below the $k$-th box of $\tilde \mu$
if and only if it is below the $k$-th box of both $(\lambda + \nu)/\lambda$
and $(\lambda + \gamma)/\lambda$. Thus every pair of elements in 
$\Pieri{\lambda}{F}$ has a maximal lower bound. Since the highest weight of $F$ 
is the maximum of $\Pieri{\lambda}{F}$, \cite{StanleyI}*{3.3.1 Proposition}
implies $\Pieri{\lambda}{F}$ is a lattice and item $(i)$ is proved. 

Every subset of a finite lattice has a unique maximal lower bound. Denoting 
$\gamma'$ the maximal lower bound of $\{\mu \in \Pieri{\lambda}{F} \mid \mu 
\succeq \gamma\}$, we see that $\gamma' \succeq \gamma$ since $\gamma$ is tautologically a lower bound for this set. 

We will prove item $(iii)$ by induction on $r = \# I = \#J$. First assume $\nu 
= \gamma + \epsilon_i - \epsilon_j$ with $\lambda_i > \lambda_j$. By the 
previous item there is a maximal $j' \preceq j$ such that $(\lambda + \gamma - 
\epsilon_{j'})/\lambda$ is a horizontal or vertical strip and $\gamma - 
\epsilon_{j'} \succeq \gamma - \epsilon_j$. There is also a minimal $i' 
\succeq i$ such that $(\gamma - \epsilon_{j'} + \epsilon_{i'})/\lambda$ is a 
horizontal or vertical strip and $\gamma - \epsilon_{j'} + \epsilon_{i'} \succeq 
\gamma - \epsilon_{j'} + \epsilon_i \succeq \gamma - \epsilon_j + \epsilon_i$. 
Since $\nu' = \gamma + \epsilon_{i'} - \epsilon_{j'}$ the base case is complete.
Now suppose $I = (i_1, \ldots, i_r)$ and $J = (j_1, \ldots, j_r)$. Set $\tilde I
= (i_1, \ldots, i_{r-1}), \tilde J = (j_1, \ldots, j_{r-1})$ and $\mu = \gamma 
+ \epsilon_{\tilde I} - \epsilon_{\tilde J}$. Then using the induction
hypothesis and the base case we have
\begin{align*}
  \nu' 
    &= (\mu' + \epsilon_{i_r} - \epsilon_{j_r})'
    = \mu' + \epsilon_{i'_r} - \epsilon_{j'_r}
    = \gamma + \epsilon_{\tilde I'} - \epsilon_{\tilde J'} + \epsilon_{i'_1} - 
    \epsilon_{j'_r}.
\end{align*} 
Set $I' = \tilde I' \cup \{i'_r\} $ and $J' = \tilde J' \cup \{j'_r\}$. By the 
induction hypothesis $\max \tilde I' \preceq i_{r-1} \preceq i_r$ and $\min J
= j_1 \preceq \min \tilde J'$, and by the base step $i'_r \preceq i_r$ and $j_1 
\preceq j_r \preceq j'_r$. Combining both observations we see that $\max I' 
\preceq i_r = \max I$ and $j_1 = \min J \preceq \min J'$.
\end{proof}

\subsection{Transition maps}
\label{ss:transition_maps}
Let $X \subset Y \subset \II$ be fixed finite sets, and let $\lambda \in \lie 
h_Y^*$ be a dominant weight. The simple module $L_Y(\lambda)$ is 
a $\gl(X)$-module by restriction, and the $\gl(X)$-module generated by its 
highest weight vector is isomorphic to $L_X(\lambda_X)$. We thus get an 
injective $\gl(X)$-homomorphism $\iota: L_X(\lambda_X) \to L_Y(\lambda)$. 
Let $F_Y$ be a weight multiplicity-free $\gl(Y)$-module, and let $F_X$ be a 
simple $\gl(X)$-submodule of $F_Y$ (not necessarily generated by the highest 
weight vector of $F_Y$). Then $F_X$ is also weight multiplicity free, and 
thus we get a $\gl(X)$-homomorphism $\tau: F_X \to F_Y$. 

Set $\phi(X,Y) := \iota \otimes \tau : L_X(\lambda_X) \otimes F_X \to 
L_Y(\lambda) \otimes F_Y$. By the Pieri rule this homomorphism decomposes as
\begin{align*}
  \xymatrix{
     L_X(\lambda_X) \otimes F_X \ar[r]^-{\cong}
      \ar[d]_-{\phi(X,Y)}
      & \displaystyle \bigoplus_{ \gamma \in \Pieri{\lambda_X}{F_X}} 
         L_X(\lambda_X +  \gamma) \ar[d]^-{\phi(X,Y,\gamma, \nu)} \\
     L_Y(\lambda) \otimes F_Y \ar[r]^-\cong
      & \displaystyle \bigoplus_{\nu \in \Pieri{\lambda}{F_Y}} 
         L_Y(\lambda +  \nu).
  }
\end{align*}
Since the domain of each homomorphism $\phi(X,Y,\gamma, \nu)$ is a simple 
$\gl(X)$-module, $\phi(X,Y,\gamma, \nu)$ is either zero or injective. Notice 
that $\phi(X,Y,\gamma, \nu) \neq 0$ if and only if the $\gl(Y)$-submodule 
generated by $\phi(X,Y)(L_X(\lambda + \gamma))$ contains $L_Y(\lambda + 
\nu)$. We will devote the rest of this section to give necessary and sufficient
conditions for $\map{\gamma}{\nu}{X}{Y}$ to be nonzero.

\subsection{Necessary conditions}
\label{ss:necessary_conditions}
Recall we have fixed $X \subset Y \subset \II$. In addition we suppose that 
there is a decomposition $Y = X^- \sqcup Z \sqcup X^+$ where $Z = Y \setminus 
X$. We also assume $(F_X, F_Y) = (\SS^dV_X, \SS^dV_Y)$ or $(F_X,F_Y) = 
(\bigwedge^d V_X, \bigwedge^d V_Y)$, with the map $\tau: F_X \to 
F_Y$ induced by the natural inclusion $V_X \to V_Y$, and fix a dominant 
$\gl(Y)$-weight $\lambda$. For any $X \subset Y$ we denote by $\lambda_X$ 
its restriction to $\lie h_X$. We have an injection $\lie h_X^* \to \lie h_Y^*$
mapping $\epsilon_i \in \lie h^*_X$ to $\epsilon_i \in \lie h_Y^*$, which 
allows us to consider a weight  $\gamma \in \lie h^*_X$ as a weight in $\lie 
h^*_Y$.

\begin{lem}
\label{lem:nonzero-necessary}
Let $\gamma \in \Pieri{\lambda_X}{F_X}$ and $\nu 
\in \Pieri{\lambda}{F_Y}$. If $\map{\gamma}{\nu}{X}{Y} \neq 0$ then there exist 
sequences $I \subset X^- \sqcup Z$ and $J \subset X^+$ such that $\nu - \gamma
= \epsilon_I - \epsilon_J$.
\end{lem}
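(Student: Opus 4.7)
The plan is to translate $\map{\gamma}{\nu}{X}{Y}\neq 0$ into the existence of a specific $\gl(X)$-highest weight vector inside $L_Y(\lambda+\nu)$, and then to constrain its weights enough to force the claimed shape of $\nu-\gamma$.

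First, since $\phi(X,Y)=\iota\otimes\tau$ is $\gl(X)$-linear, the image of the highest weight vector $v_{\lambda_X+\gamma}\in L_X(\lambda_X+\gamma)$ is itself a $\gl(X)$-highest weight vector in $L_Y(\lambda)\otimes F_Y$. Its $\lie h_Y$-weight is $\lambda+\gamma$, where $\gamma$ is extended by zero on $Z$: indeed, $\iota(L_X(\lambda_X))$ is the $\gl(X)$-submodule of $L_Y(\lambda)$ generated by $v_\lambda$, so since $\lie h_Z$ commutes with $\gl(X)$ its eigenvalue on every vector of $\iota(L_X(\lambda_X))$ is the constant $\lambda_Z$, while $\tau(F_X)\subset F_Y$ consists of weight vectors supported in $X$. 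Hence if $\map{\gamma}{\nu}{X}{Y}\neq 0$, then $L_Y(\lambda+\nu)$ contains a nonzero $\gl(X)$-highest weight vector $w$ of $\gl(X)$-weight $\lambda_X+\gamma$ and $\lie h_Z$-weight $\lambda_Z$. Already the condition $\lambda+\gamma\in\supp L_Y(\lambda+\nu)$ implies that $\nu-\gamma$ is a sum of positive roots of $\gl(Y)$, and since $\gamma_z=0$ for every $z\in Z$, the negative part of $\nu-\gamma$ is automatically contained in $X$.

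To upgrade this to the full statement, I would branch $L_Y(\lambda+\nu)$ along the Levi subalgebra $\gl(X^-)\oplus\gl(Z)\oplus\gl(X^+)$ of $\gl(Y)$ via Littlewood--Richardson. The $\gl(X^-)\oplus\gl(X^+)$-highest weight property of $w$ forces it to sit in a Levi summand of the form $L_{X^-}(\lambda_{X^-}+\gamma_{X^-})\boxtimes L_Z(\beta)\boxtimes L_{X^+}(\lambda_{X^+}+\gamma_{X^+})$ with $\beta\succeq\lambda_Z$ (so that $\lambda_Z$ is in the support of $L_Z(\beta)$), and the nonvanishing of the corresponding Littlewood--Richardson coefficient, combined with the Pieri conditions on $\gamma$ and $\nu$, forces the skew shape $(\lambda+\nu)/(\lambda+\gamma)$ to consist of boxes added only in rows labelled by $X^-\cup Z$ and of boxes removed only from rows labelled by $X^+$. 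This is exactly the statement $\nu-\gamma=\epsilon_I-\epsilon_J$ with $I\subset X^-\sqcup Z$ and $J\subset X^+$.

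The main obstacle is that $\gl(X)$ is not a Levi subalgebra of $\gl(Y)$: the positive roots $E_{ij}\in\gl(X)$ with $i\in X^-$ and $j\in X^+$ act \emph{across} the Levi summands used above, so the $\gl(X)$-highest weight condition on $w$ carries strictly more information than ordinary Levi branching records. To handle this cross-root annihilation I expect to exploit the weight multiplicity freeness of $F_Y$, which makes the generating highest weight vector of each summand $L_Y(\lambda+\nu)\subset L_Y(\lambda)\otimes F_Y$ available in closed Pieri form; matching $\phi(X,Y)(v_{\lambda_X+\gamma})$ against this explicit form then yields the combinatorial compatibility needed to rule out cross-root contributions and to pin down the decomposition of $\nu-\gamma$ claimed in the lemma.
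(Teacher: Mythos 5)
Your opening paragraph is sound and establishes the easy part of the claim: the image of $v_{\lambda_X+\gamma}$ is a $\gl(X)$-highest weight vector of weight $\lambda+\gamma$ (with $\gamma$ extended by zero on $Z$), so $\nu-\gamma$ is a sum of positive roots of $\gl(Y)$, and since $(\nu-\gamma)_z=\nu_z\geq 0$ for $z\in Z$ the negative contributions to $\nu-\gamma$ are supported in $X$. But this is weaker than the lemma: it gives $J\subset X$ and $I\subset Y$, whereas the statement requires $J\subset X^+$ and $I\subset X^-\sqcup Z$, i.e.\ the sign constraints $(\nu-\gamma)_i\geq 0$ for $i\in X^-$ and $(\nu-\gamma)_i\leq 0$ for $i\in X^+$. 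Those are the real content of the lemma, and your second and third paragraphs do not establish them: they are a sketch of a Littlewood--Richardson branching argument along $\gl(X^-)\oplus\gl(Z)\oplus\gl(X^+)$, and you yourself flag (correctly) that $\gl(X)$ is not a Levi subalgebra of $\gl(Y)$ so the cross-roots $E_{ij}$ with $i\in X^-$, $j\in X^+$ are not accounted for by Levi branching. The phrases ``I would branch'' and ``I expect to exploit'' signal that this part is aspirational rather than proved, so there is a genuine gap.

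The paper proves the sign constraints by a much simpler route, inducting on $\#Z$. When $Z=\{z\}$ is a single element, $\gl(X)\subset\gl(Y)$ is a corank-one inclusion and the classical $\gl(n-1)\subset\gl(n)$ branching rule applies directly: it gives the interlacing inequalities $\lambda_i+\nu_i\geq\lambda_i+\gamma_i\geq\lambda_{i+1}+\nu_{i+1}$ for $i\in X^-$ and $\lambda_{i-1}+\nu_{i-1}\geq\lambda_i+\gamma_i\geq\lambda_i+\nu_i$ for $i\in X^+$, from which $\nu_i\geq\gamma_i$ on $X^-$ and $\gamma_i\geq\nu_i$ on $X^+$ fall out immediately. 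The inductive step then factors $\phi(X,Y)$ through an intermediate $Y'=Y\setminus\{z\}$ and composes. This sidesteps the Levi/non-Levi issue entirely and requires no control of Littlewood--Richardson coefficients. If you want to salvage your approach, the cleanest fix is precisely to replace the one-shot Levi branching by this inductive corank-one branching.
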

\begin{proof}
We proceed by induction on $\# Z$. Suppose first that $Z = \{z\}$, and that
$\map{\gamma}{\nu}{X}{Y} \neq 0$. Then $L_Y(\lambda + \nu)$ has a 
$\gl(X)$-submodule isomorphic to $L_X(\lambda_X + \gamma)$. The 
classical branching rule \cite{Zhelobenko73}*{Theorem 2, section 66} (see also 
\cite{FH91}*{Exercise 6.12}) tells us that $\lambda_i + \nu_i \geq 
\lambda_i + \gamma_i \geq \lambda_{i+1} + \nu_{i+1}$ for all $i \in X^-$ and 
$\lambda_{i-1} + \nu_{i-1} \geq \lambda_i + \gamma_i \geq \lambda_i + \nu_i$ 
for all $i \in X^+$ (here the successor and predecessor are taken in $Y$). In 
particular $\nu_i \geq \gamma_i$ if $i \in X^-$ and $\gamma_i \geq \nu_{i}$ if 
$i \in X^+$, so
\begin{align*}
  \nu - \gamma = \sum_{i \in X^-} (\nu_i - \gamma_i) \epsilon_i
    + \nu_z \epsilon_z - \sum_{i \in X^+} (\gamma_i - \nu_i) \epsilon_i.
\end{align*}
This proves the base case.

For the induction step, assume $Z = Z' \sqcup \{z\}$ with $Z'$ nonempty. Set 
$Y' = X^- \sqcup Z' \sqcup X^+$ and $F_{Y'} = \SS^dV_Y$ or $F_{Y'} = 
\bigwedge^d V_{Y'}$. Since $\map{\gamma}{\nu}{X}{Y} \neq 0$ there exists $\mu 
\in \Pieri{\lambda_{Y'}}{F_{Y'}}$ such that $\map{\gamma}{\mu}{X}{Y'} \neq 0$ 
and $\map{\mu}{\nu}{Y'}{Y} \neq 0$. Thus by the induction hypothesis
\begin{align*}
\nu - \gamma 
  = (\nu - \mu) + (\mu - \gamma) 
  = (\epsilon_{K} - \epsilon_L) + (\epsilon_{I'} - \epsilon_{J'})  
\end{align*}
with $K \subset X^- \sqcup Z, I' \subset X^- \sqcup Z'$ and $L, J' \subset 
X^+$, and the proof is complete.
\end{proof}

\subsection{Nonzero transition maps}
The following is a partial converse of Lemma \ref{lem:nonzero-necessary}.

\begin{prop}
\label{prop:nonzero-sufficient}
Let $\gamma \in \Pieri{\lambda_X}{F_X}$ and $\nu \in \Pieri{\lambda}{F_Y}$ and let
$I, J$ be sequences such that $\nu - \gamma = \epsilon_I - \epsilon_J$. 
\begin{enumerate}
  \item Suppose $X^+ = \emptyset$. Then $\map{\gamma}{\nu}{X}{Y} \neq 0$ if and 
    only if $I = J = \emptyset$.
  \item Suppose $X^- = \emptyset$. Then $\map{\gamma}{\nu}{X}{Y} \neq 0$ if and 
    only if $I \subset Z$ and $J \subset X^+$.
  \item Suppose $X^+ \neq \emptyset \neq X^-$, $\lambda(Z) = \{a\}$ 
    with $a > \lambda_{\min X^+}$, and $\#Z \geq d$. Then 
    $\map{\gamma}{\nu}{X}{Y} \neq 0$ if and only if $I \subset X^- \sqcup Z$ 
    and $J \subset X^+$. 
\end{enumerate}
\end{prop}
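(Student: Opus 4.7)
The ``only if'' direction in each of the three statements is an immediate consequence of Lemma~\ref{lem:nonzero-necessary}, so the work lies entirely in the ``if'' direction. Throughout, let $w$ denote a $\gl(X)$-highest weight vector of the summand $L_X(\lambda_X + \gamma) \subset L_X(\lambda_X) \otimes F_X$. For case~(i), the argument is essentially a weight computation. Since $X^+ = \emptyset$ forces $J = \emptyset$, the difference $\nu - \gamma = \epsilon_I$ must be a sum of positive roots if $\phi(X,Y)(w)$ is to project nontrivially to $L_Y(\lambda + \nu)$. The key observation is that $\iota(L_X(\lambda_X)) \subset L_Y(\lambda)$ consists of vectors whose $\gl(Y)$-weights differ from $\lambda$ only at coordinates in $X$, so $\phi(X,Y)(w)$ has $\gl(Y)$-weight exactly $\lambda + \gamma$. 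Since positive roots are traceless while $\epsilon_I$ has trace $\#I$, one concludes $I = \emptyset$ and $\nu = \gamma$. As $\phi(X,Y) = \iota \otimes \tau$ is injective, $\phi(X,Y)(w) \neq 0$; the vector lies in the $L_Y(\lambda + \gamma)$-summand by the weight argument, giving $\phi(X,Y,\gamma,\gamma) \neq 0$.

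For cases~(ii) and~(iii) I would proceed by induction on $\#Z$. The base case $\#Z = 1$ is treated by a direct computation using the classical $\gl(n{+}1) \downarrow \gl(n)$ branching rule: the $\gl(X)$-highest weight vectors of weight $\lambda_X + \gamma$ inside each allowed $L_Y(\lambda + \nu)$ are written out explicitly, and one verifies that the image $\phi(X,Y)(w) = v_\lambda \otimes e_\gamma + (\text{lower order terms})$ has a nonzero component on each of them. For the inductive step, pick $z \in Z$, set $Y' = Y \setminus \{z\}$ and $F_{Y'} = \SS^d V_{Y'}$ or $\bigwedge^d V_{Y'}$ as appropriate, and factor $\phi(X,Y) = \phi(Y',Y) \circ \phi(X,Y')$. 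This gives the decomposition $\phi(X, Y, \gamma, \nu) = \sum_\mu \phi(Y', Y, \mu, \nu) \circ \phi(X, Y', \gamma, \mu)$, and the Pieri lattice structure of Lemma~\ref{lem:pieri-lattice} furnishes, for each allowed $\nu$, a canonical intermediate $\mu \in \Pieri{\lambda_{Y'}}{F_{Y'}}$ for which $\phi(X,Y',\gamma,\mu) \neq 0$ by induction and $\phi(Y',Y,\mu,\nu) \neq 0$ by the base case.

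The main technical obstacle is to rule out cancellation among the summands above and to maintain the hypotheses of case~(iii) through the induction. The condition $\lambda(Z) = \{a\}$ clearly survives passage from $Z$ to $Z \setminus \{z\}$; the strict inequality $a > \lambda_{\min X^+}$ keeps the intermediate weights $\lambda_{Y'} + \mu$ dominant; and the cardinality condition $\#Z \geq d$ leaves enough room in the smaller $Z' = Z \setminus \{z\}$ to accommodate the Pieri targets produced by induction, provided the induction is arranged so that elements of $X^-$ or $X^+$ are split off before exhausting $Z$. For the cancellation issue, I expect that in case~(iii) the branching of each relevant $L_Y(\lambda + \nu)$ to $\gl(Y')$ is multiplicity free on the $L_X(\lambda_X + \gamma)$-isotypic component, so at most one summand in the displayed decomposition is nonzero and no cancellation can occur; case~(ii) is cleaner, because the absence of $X^-$ lets one read off the relevant $\mu$ by inspection, and the Pieri lattice collapses to a simpler form.
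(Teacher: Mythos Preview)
Your overall plan---induction on $\#Z$ with a direct base case---matches the paper's, but two points deserve correction.

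First, the cancellation worry is a red herring. Recall from the paragraph after the definition of the maps $\phi(X,Y,\gamma,\nu)$ that $\phi(X,Y,\gamma,\nu) \neq 0$ is \emph{equivalent} to the statement that the $\gl(Y)$-module generated by $\phi(X,Y)(L_X(\lambda_X + \gamma))$ contains the summand $L_Y(\lambda + \nu)$. With this characterization, the implication ``$\phi(X,Y',\gamma,\mu) \neq 0$ and $\phi(Y',Y,\mu,\nu) \neq 0$ imply $\phi(X,Y,\gamma,\nu) \neq 0$'' is immediate from transitivity of generation, and no sum over $\mu$ ever appears. The paper's inductive steps for both (ii) and (iii) use exactly this observation; your multiplicity-freeness hope is unnecessary.

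Second, and more seriously, your induction for item (iii) does not maintain its own hypotheses. Removing one element of $Z$ turns $\#Z \geq d$ into $\#Z' \geq d-1$, which is not what you need to invoke (iii) again. The paper handles this by proving a sharper statement (Proposition~\ref{prop:gg-sufficient}) in which the hypothesis $\#Z \geq d$ is replaced by $\#I = \#J \leq \#Z$; in the inductive step one removes one element from each of $I$, $J$ and $Z$ simultaneously, taking the intermediate weight $\mu = \gamma + \epsilon_{I \setminus \{i_r\}} - \epsilon_{J \setminus \{j_r\}}$. Your remark about ``splitting off elements of $X^-$ or $X^+$ before exhausting $Z$'' does not address this, since $X^-$ and $X^+$ are fixed throughout the induction. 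For the base case $\#Z = 1$ of (iii), the paper does not appeal to branching directly but first invokes a reduction lemma (Lemma~\ref{lem:reduction}) that cuts the problem down to $X = \{i,j\}$, $Y = \{i,z,j\}$, after which an explicit $\gl(2) \hookrightarrow \gl(3)$ computation with the operator $E_{z,j}E_{i,z}$ finishes. Your proposal has no analogue of this reduction, and carrying out the full branching computation you allude to for arbitrary $X$ would be substantially harder.

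For item (ii) the paper's base case is likewise an explicit enveloping-algebra computation, but it rests on Lemma~\ref{lem:hw-support} describing the shape of the highest weight vector $v_{\lambda+\gamma}^X$ inside $L_X(\lambda_X) \otimes F_X$: one applies $E = \prod_{i} E_{z,i}^{a_i}$ to $\phi(X,Y)(v_{\lambda+\gamma}^X)$ and works modulo $M_Y(\npreceq \nu)$ to obtain a nonzero multiple of $v_\lambda \otimes e_\nu$. This is more direct than attempting to locate the relevant $\gl(X)$-highest weight vectors inside each $L_Y(\lambda+\nu)$ via branching.
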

We will prove each item separately. The first one is easy.

\begin{proof}[Proof of item $(i)$ of Proposition \ref{prop:nonzero-sufficient}]
The condition is necessary by Lemma \ref{lem:nonzero-necessary}, and as a 
consequence $\phi(X,Y,\gamma,\nu) = 0$ whenever $\nu \neq \gamma$. On 
the other hand, since $Y = X \sqcup Z$ the definition of $\Pieri{\lambda}{F_Y}$ 
implies that $\gamma \in \Pieri{\lambda}{F_Y}$. Finally, since at least one of 
the transition maps $\map{\gamma}{\nu}{X}{Y}$ must be nonzero, it follows that 
$\map{\gamma}{\gamma}{X}{Y} \neq 0$.
\end{proof}

\subsection{Highest weight vectors and the case $Y = Z \sqcup X$}
Our strategy to prove items $(ii)$ and $(iii)$ of Proposition 
\ref{prop:nonzero-sufficient} is to take the highest weight vector 
$v_{\lambda + \gamma}^X \in L_X(\lambda_X + \gamma) \subset L_X(\lambda_X) 
\otimes F_X$ and show that the $\gl(Y)$-module generated by $\phi(X,Y)
(v^X_{\lambda + \gamma})$ has a $\gl(Y)$-submodule isomorphic to $L_Y(\lambda + 
\nu)$. For this we will need some technical results on highest weight vectors.

Let $\mu \in \supp F_X$. We denote by $M_X(\succ \mu)$ the sum of all 
submodules $L_X(\lambda_X + \gamma)$ with $\gamma \in \Pieri{\lambda}{F_X}$ and 
$\gamma \succ \mu$. The submodules $M_X(\succeq \mu)$ and $M_X(\npreceq \mu)$ 
are defined analogously.  
\begin{lem}
\label{lem:hw-support}
Let $\gamma \in \Pieri{\lambda}{F_X}$ and let $v_{\lambda + \gamma}^X$ 
be the highest weight vector of $L_X(\lambda_X +\gamma) \subset L_X(\lambda_X) 
\otimes F_X$. Then 
  \begin{align*}
      v_{\lambda +\gamma}^X \equiv \sum_{\mu \succeq \eta \succeq \gamma}
      w_{\lambda + \gamma - \eta} \otimes e_{\eta}
      \mod M_X(\npreceq \mu).
  \end{align*}
Furthermore, $w_{\lambda + \gamma - \eta} \neq 0$ for all $\eta$, and their 
linear span is an $\lie n_X$-submodule of $L_X(\lambda_X)$ isomorphic to 
$\bigoplus_{\mu \succeq \eta \succeq \gamma} (F_X^*)_{-\eta}$.
\end{lem}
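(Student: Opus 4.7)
The plan is to analyze the weight-basis expansion of $v^X_{\lambda+\gamma}$, use the annihilation by $\lie n_X$ to derive an equivariance on the coefficients, and then use the Pieri semisimple decomposition to perform the truncation modulo $M_X(\npreceq\mu)$.

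First I would write
\begin{equation*}
v^X_{\lambda+\gamma} \;=\; \sum_{\eta \in \supp F_X} w_\eta \otimes e_\eta,
\end{equation*}
with $w_\eta \in L_X(\lambda_X)_{\lambda_X+\gamma-\eta}$; the weight constraint forces $w_\eta = 0$ unless $\eta \succeq \gamma$, since $\lambda_X$ is the maximal weight of $L_X(\lambda_X)$. Next, for each positive root $\alpha = \epsilon_i - \epsilon_j$ (with $i \prec j$) I would apply the raising operator $E_\alpha = E_{ij}$ to both sides. Multiplicity-freeness of $F_X$ yields $E_\alpha e_\eta = c_\alpha(\eta)\, e_{\eta+\alpha}$ for a single scalar $c_\alpha(\eta)$, and comparing coefficients of $e_\eta$ in the identity $0 = E_\alpha v^X_{\lambda+\gamma}$ produces the recursion
\begin{equation*}
E_\alpha w_\eta \;=\; -\,c_\alpha(\eta-\alpha)\, w_{\eta-\alpha}.
\end{equation*}
This makes the assignment $f^*_\eta \mapsto w_\eta$ an $\lie n_X$-equivariant linear map $F_X^* \to L_X(\lambda_X)$ (up to a uniform sign twist), so its image $W = \vspan{w_\eta : \eta\succeq\gamma}$ is an $\lie n_X$-submodule of $L_X(\lambda_X)$.

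I would then show that $w_\gamma$ is a nonzero multiple of the highest weight vector $v_0$ of $L_X(\lambda_X)$: the weight forces $w_\gamma \in \CC v_0$, and if $w_\gamma$ vanished, then iterating the recursion together with the simplicity of $F_X^*$ (which ensures that the scalars $c_\alpha$ are nonzero along every edge of the weight graph of $F_X$ incident to $\gamma$) would propagate the vanishing to all $w_\eta$, contradicting $v^X_{\lambda+\gamma}\ne 0$. The same recursion, run in the opposite direction, then propagates non-vanishing to every $w_\eta$ with $\eta \succeq \gamma$ in $\supp F_X$, and gives the $\lie n_X$-module structure of $W$.

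For the truncation modulo $M_X(\npreceq\mu)$, I would use the $\gl(X)$-semisimple decomposition $L_X(\lambda_X)\otimes F_X = M_X(\preceq\mu)\oplus M_X(\npreceq\mu)$. For each $\eta \succ \mu$ in $\supp F_X$ appearing in the expansion, I would construct an element of $M_X(\npreceq\mu)$ whose weight-$(\lambda+\gamma)$ component has nonzero $e_\eta$-coefficient by taking an appropriate lowering-operator descendant of a highest weight vector $v^X_{\lambda+\gamma'}$ for some $\gamma' \not\preceq \mu$ in $\Pieri{\lambda}{F_X}$; subtracting such descendants eliminates the $e_\eta$-terms one by one. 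Iterating over all $\eta\succ\mu$ rewrites $v^X_{\lambda+\gamma}$ modulo $M_X(\npreceq\mu)$ as $\sum_{\mu\succeq\eta\succeq\gamma} \tilde w_\eta \otimes e_\eta$ with each $\tilde w_\eta$ a nonzero rescaling of the corresponding $w_\eta$. Since the subtracted vectors lie in $\gl(X)$-submodules, the $\lie n_X$-equivariance is preserved, and a weight count identifies the span of the $\tilde w_\eta$ with $\bigoplus_{\mu\succeq\eta\succeq\gamma}(F_X^*)_{-\eta}$.

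The main obstacle is the final iterative step: guaranteeing both non-vanishing of the $\tilde w_\eta$ after subtraction and compatibility with the $\lie n_X$-structure. This hinges on careful use of the Pieri lattice structure of Lemma \ref{lem:pieri-lattice}, so that for each offending $\eta \succ \mu$ one can locate a $\gamma'\not\preceq\mu$ in $\Pieri{\lambda}{F_X}$ sitting above $\eta$ and track the lowering-operator descendant whose $e_\eta$-coefficient governs the subtraction.
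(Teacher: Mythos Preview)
Your treatment of the untruncated case (when $\mu$ is the highest weight of $F_X$) is correct and essentially equivalent to the paper's: you compute the $\lie n_X$-equivariance of the coefficients by a direct recursion, while the paper packages the same computation as the isomorphism $(L_X(\lambda)\otimes F_X)^{\lie n_X}\cong\Hom_{\lie n_X}(F_X^*,L_X(\lambda))$ and uses the companion isomorphism with $\Hom_{\lie n_X}(L_X(\lambda)^*,F_X)$ to see that $w_\lambda\neq 0$. These are the same argument in different clothing.

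The gap is in your truncation step. When you subtract a descendant of some $v^X_{\lambda+\gamma'}$ with $\gamma'\npreceq\mu$ to kill an $e_\eta$-term with $\eta\npreceq\mu$, that descendant will in general have nonzero components along $e_{\eta'}$ for many $\eta'\preceq\mu$ as well, so the resulting $\tilde w_{\eta'}$ is \emph{not} a rescaling of $w_{\eta'}$ but an honest modification. Your subsequent claims about non-vanishing and the $\lie n_X$-structure then rest on nothing. Moreover, you should be eliminating $\eta\npreceq\mu$, not just $\eta\succ\mu$; these differ since $\preceq$ is only a partial order on $\supp F_X$. The invocation of Lemma~\ref{lem:pieri-lattice} does not obviously repair this: it helps you locate a $\gamma'\npreceq\mu$ above each bad $\eta$, but gives no control over what the subtraction does to the good coefficients.

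The paper avoids this entirely. Instead of tracking coefficients through subtractions, it argues structurally: first one checks (by descending induction) that $M_X(\succeq\mu)$ is generated by $v_\lambda\otimes e_\mu$, hence $v_\lambda\otimes e_\eta\in M_X(\npreceq\mu)$ whenever $\eta\npreceq\mu$. Then the quotient $M/M_X(\npreceq\mu)$ is generated as an $\lie n^-_X$-module by $\{v_\lambda\otimes e_\eta:\eta\preceq\mu\}$, and acting by $\lie n^-_X$ only lowers the index $\eta$, so every element of the quotient, in particular the image of $v^X_{\lambda+\gamma}$, is represented by a sum $\sum_{\mu\succeq\eta\succeq\gamma} w'_\eta\otimes e_\eta$. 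The non-vanishing and $\lie n_X$-structure are then inherited from the untruncated case. This bypasses any need to control individual subtractions.
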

\begin{proof}
We start with the case where $\mu$ is the highest weight vector of $F_X$, and 
so $M_X(\npreceq \mu) = 0$. Then $v^X_{\lambda+\gamma} = \sum_{\eta \in \supp 
F_X} w_{\lambda + \gamma - \eta} \otimes e_{\eta}$, with $w_{\lambda + \gamma 
- \eta} \in L_X(\lambda_X)$ of weight $\lambda_X + \gamma - \eta$. Since 
$\lambda_X$ is the highest weight of this module we see that $w_{\lambda + 
\gamma - \eta} = 0$ unless $\gamma \preceq \eta$, so $v_{\lambda + \gamma}^X = 
\sum_{\mu \succeq \eta \succeq \gamma} w_{\lambda + \gamma - \eta} \otimes e_{\eta}$.

By standard Lie theory we have isomorphisms
\begin{align*}
  (L_X(\lambda) \otimes F_X)^{\lie n_X}
    &\cong \hom_{\lie n_X}(F_X^*,L_X(\lambda))
  &(L_X(\lambda) \otimes F_X)^{\lie n_X}
    &\cong \hom_{\lie n_X}(L_X(\lambda)^*,F_X).
\end{align*}
The first one sends $v_{\lambda+\gamma}^X$ to the $\lie n_X$-homomorphism $\psi$
given by $\psi(e_{\eta}^*) = w_{\lambda+\gamma-\eta}$, and so the span of the 
vectors $w_{\lambda + \gamma - \eta}$ is the image of the $\lie n_X$-submodule 
$\bigoplus_{\mu \succeq \eta \succeq \gamma} (F_X^*)_{-\eta}$ by $\psi$. The
statement will be proved once we show that the restriction of $\psi$ to 
$\bigoplus_{\mu \succeq \eta \succeq \gamma} (F_X^*)_{-\eta}$ is injective, 
which is equivalent to the fact that 
$\psi(e_{\gamma}^*) = w_\lambda$ is nonzero. Now, the second isomorphism 
sends $v_{\lambda+ \gamma}^X$ to the unique $\lie n_X$-homomorphism defined by 
$v_{\lambda}^* \mapsto 
v_{\lambda}^*(w_{\lambda}) e_{\gamma}$. Since $v_{\lambda + \gamma}^X \neq 0$, 
it must be the case that $v_{\lambda}^*(w_{\lambda}) \neq 0$ and hence 
$w_{\lambda} \neq 0$. This completes the proof when $\mu$ is the highest weight 
of $F_X$.

We now proceed with the general case. A descending induction argument, again 
starting with the highest weight $\mu$ of $F_X$, shows that $M_X(\succeq \mu)$
is generated by $v_{\lambda}^X \otimes e_{\mu}$ as a $\gl(X)$-module. It 
follows that if $\eta \npreceq \mu$ then $v^X_{\lambda}\otimes e_{\eta} \in 
M_X(\npreceq \mu)$. Now, since $L_X(\lambda) \otimes F_X$ is generated as an 
$\lie n^-_X$-module by the set $\{v_{\lambda}^X \otimes e_{\eta} \mid \eta \in 
\supp F_X\}$, the quotient $M / M_X(\npreceq \mu)$ is generated as an $\lie 
n^-_X$-module by $\{v_{\lambda}^X \otimes e_{\eta} \mid \eta \preceq \mu\}$. If 
we act by any element of $\lie n^-_X$ on any such generator, we obtain a linear 
combination of vectors $\sum_{\mu \succeq \eta \succeq \gamma} w_{\lambda + 
\gamma - \eta} \otimes e_{\eta}$ for some $w_{\lambda + \gamma - \eta}$. 
In particular $v_{\lambda +\gamma}^X \equiv \sum_{\mu \succeq \eta \succeq 
\gamma} w_{\lambda + \gamma - \eta} \otimes e_{\eta} \mod M_X(\npreceq \mu)$, 
and the second part of the statement follows from the previous case.
\end{proof}

\begin{proof}[Proof of item $(ii)$ of Proposition \ref{prop:nonzero-sufficient}]
That the condition is necessary follows from Lemma \ref{lem:nonzero-necessary}. 
To see that it is sufficient we proceed by induction on $\#Z$. Suppose that $Z =
\{z\}$ and that $\nu - \gamma = \sum_{i \in X} a_j (\epsilon_z - \epsilon_i)$ 
for some $a_i \in \ZZ_{\geq 0}$. Let $E = \prod_{j \in X} E_{z,i}^{a_i} \in 
U(\gl(Y))_{\nu - \gamma}$; this product is well defined since all the factors 
commute. By Lemma \ref{lem:hw-support}
\begin{align*}
  \phi(X,Y)(v_{\lambda +\gamma}^X) 
    &\equiv \sum_{\nu \succeq \eta \succeq \gamma} 
      w_{\lambda + \gamma - \eta} \otimes e_{\eta}
      \mod M_Y(\npreceq \nu).
\end{align*}
Applying $E$ to the sum on the right hand side of the congruence we obtain a 
sum of terms of the form $E_{\sigma} w_{\lambda + \gamma - \eta} \otimes 
E_{\tau} e_{\eta}$, where $\sigma + \tau = \nu - \gamma$. Notice that $\sigma 
\neq 0$ implies $\sigma_z \neq 0$.

By weight considerations, $E_{\sigma} w_{\lambda + \gamma - \eta} = 0$ unless 
$\sigma + \lambda + \gamma - \eta \preceq \lambda$, or equivalently $ \sigma + 
\gamma \preceq  \eta$. On the other hand, $E_{\sigma} w_{\lambda + \gamma - 
\eta} \equiv 0$ modulo $M(\npreceq \nu)$ unless $\tau + \eta \preceq \nu$, 
which implies $\eta \preceq \nu - \tau = \gamma + \sigma$. Thus $\sigma = \eta 
- \lambda_X$. Since $\supp \eta - \lambda_X \subset X$, it must be the case 
that $\sigma = 0$. In conclusion
\begin{align*}
  E \phi(X,Y)(v_{\lambda +\gamma}^X) 
    &\equiv w_{\lambda } \otimes E e_{\gamma}
    \equiv c w_{\lambda} \otimes e_{\nu}
      \mod M_Y(\npreceq \nu).
\end{align*}
for some nonzero constant $c$. This vector is a nonzero multiple of 
$v_{\lambda} \otimes e_{\nu}$ which generates $L_Y(\lambda + \nu)$
modulo $M_Y(\npreceq \nu)$, and so the direct summand $L_Y(\lambda + \nu)$ is 
contained in the $\gl(Y)$-module generated by $\phi(X,Y)(L_X(\lambda + \gamma)
)$. This shows that $\phi(X,Y,\gamma,\nu) \neq 0$ and completes the proof of 
the base case.

We now proceed with the inductive step. Suppose $Z = \{z\} \sqcup Z'$ and 
$\nu - \gamma = \sum_{i \in Z, j \in X} a_{i,j} (\epsilon_i - \epsilon_j)$. 
Set $Y' = Z' \sqcup X$ and $\mu = \gamma + \sum_{i \in Z', j \in X} a_{i,j} 
(\epsilon_i - \epsilon_j)$. By the induction hypothesis 
$\map{\gamma}{\mu}{X}{Y'}$ and $\map{\mu}{\nu}{Y'}{Y} \neq 0$, 
which implies $\map{\gamma}{\nu}{X}{Y} \neq 0$. 
\end{proof}

\subsection{The case $X^+ \neq \emptyset \neq X^-$}
\label{ss:nonzero-ggcurly}
For proving item $(iii)$ of Proposition \ref{prop:nonzero-sufficient} we need 
some preparation. 
Suppose $A \subset X$ and let $C = X \setminus A$. Then $\gl(A) \oplus \gl(C)$
is a Lie subalgebra of $\gl(X)$ and any $\gl(X)$-module is a $\gl(A) \oplus 
\gl(C)$-module by restriction. In particular there are decompositions of 
$\gl(A) \oplus \gl(C)$-modules
\begin{align*}
  \SS^dV_X
    &=\bigoplus_{k+\ell = d} \SS^k V_A \boxtimes \SS^\ell V_C,
  &\bigwedge^d V_X
    &=\bigoplus_{k+\ell = d} \bigwedge^k V_A \boxtimes \bigwedge^\ell V_C.
\end{align*}
Thus, if $\gamma \in \supp F_X$ there exist a unique weight multiplicity-free 
$\gl(A)$-module $F_A$ and a unique weight multiplicity-free $\gl(C)$-module 
$F_C$ such that $\gamma \in \supp F_A \boxtimes F_C$. Since $F_C$ is a trivial 
$\gl(A)$-module we have a $\gl(A)$-homomorphism $p_\gamma: F_A \otimes F_C \to 
F_A$ given by
\begin{align*}
p_\gamma(e_\eta) 
  &= \begin{cases}
    e_{\eta_A} & \mbox{ if } \eta_C = \gamma_C \\
    0          & \mbox{otherwise.}
  \end{cases}  
\end{align*}

Now let $\mu \in \lie h_X$ be a dominant weight. The highest weight 
vector $v_\mu^X \in L_X(\mu)$ is a $\gl(A) \oplus \gl(C)$-highest weight 
vector and generates the unique $\gl(A) \oplus \gl(C)$-submodule of $L_X(\mu)$ 
isomorphic to $L_A(\mu_A) \boxtimes L_C(\mu_C)$. Since $L_C(\mu_C)$ has a 
trivial $\gl(A)$-action there exists a unique $\gl(A)$-homomorphism $L_C(\mu_C) 
\to \CC$ sending the highest weight vector $v_{\mu_C}^C$ to $1$ and annihilating
any weight space of weight lower than $\mu_C$. We denote by $p_A$ the 
$\gl(A)$-homomorphism given by the composition $L_X(\mu) \to L_A(\mu_A) \otimes 
L_C(\mu_C) \to L_A(\mu_A)$. With these preliminaries in place we can prove the 
following lemma.

\begin{lem}
\label{lem:reduction}
Let $\gamma \in \Pieri{\lambda_X}{F_X}$ and $\nu \in \Pieri{\lambda}{F_Y}$ with
$\nu - \gamma = \epsilon_I - \epsilon_J$. Suppose $B \subset Y$ is such that 
$I,J \subset B$, and let $A = B \cap X$. If $\map{\gamma_A}{\nu_B}{A}{B} \neq 0$
then $\map{\gamma}{\nu}{X}{Y} \neq 0$.
\end{lem}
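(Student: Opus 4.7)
The plan is to use the $\gl(A)$- and $\gl(B)$-equivariant projections $p_A \otimes p_\gamma$ and $p_B \otimes p_\nu$ from the setup preceding the lemma to transport the hypothesized non-vanishing of $\map{\gamma_A}{\nu_B}{A}{B}$ to $\map{\gamma}{\nu}{X}{Y}$. Set $C = X \setminus A$ and $D = Y \setminus B$. Since $A = X \cap B$, we have $C \cap B = \emptyset$, and consequently $C \subset D$ and the commutation $[\gl(B), \gl(C)] = 0$ holds inside $\gl(Y)$. Because $\nu - \gamma = \epsilon_I - \epsilon_J$ is supported in $B$ and $\gamma$ is supported in $X$, the extensions to $\lie h_Y^*$ satisfy $\nu_D = \gamma_D$.

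The first task is to establish the identity
\begin{equation*}
  (p_B \otimes p_\nu) \bigl( \phi(X, Y)(v^X_{\lambda + \gamma}) \bigr) = c \cdot \phi(A, B)(v^A_{\lambda_A + \gamma_A})
\end{equation*}
for some nonzero constant $c$. I would verify this by expanding $v^X_{\lambda + \gamma} = \sum_{\eta \succeq \gamma} w^X_{\lambda + \gamma - \eta} \otimes e_\eta$ via Lemma \ref{lem:hw-support}. The projection $p_\nu$ kills all terms with $\eta_D \neq \nu_D$; since $\eta$ is supported in $X$ and $\nu_D = \gamma_D$, the surviving $\eta$ have the form $\eta_A + \gamma_C$, on which the $\lie h_C$-weight of $w^X_{\lambda + \gamma - \eta}$ is $\lambda_C$. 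On such vectors the composition $p_B \circ \iota$ agrees with $\iota \circ p_A$ up to a common nonzero scalar (both are $\gl(A)$-equivariant and send $v^X_{\lambda_X}$ to $v^B_{\lambda_B}$), and the uniqueness inherent in Lemma \ref{lem:hw-support} identifies the resulting sum with a nonzero multiple of the expansion of $\phi(A, B)(v^A_{\lambda_A + \gamma_A})$.

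By the hypothesis $\map{\gamma_A}{\nu_B}{A}{B} \neq 0$, following the strategy of the proofs of items (ii) and (iii) of Proposition \ref{prop:nonzero-sufficient}, there is an element $u \in U(\gl(B))_{\nu - \gamma}$ satisfying $u \cdot \phi(A, B)(v^A_{\lambda_A + \gamma_A}) \not\equiv 0 \pmod{M_B(\npreceq \nu_B)}$. Regarding $u$ as an element of $U(\gl(Y))$ via $\gl(B) \hookrightarrow \gl(Y)$ and using the $\gl(B)$-equivariance of $p_B \otimes p_\nu$ together with the identity above,
\begin{equation*}
  (p_B \otimes p_\nu) \bigl( u \cdot \phi(X, Y)(v^X_{\lambda + \gamma}) \bigr) = c \cdot u \cdot \phi(A, B)(v^A_{\lambda_A + \gamma_A}) \not\equiv 0 \pmod{M_B(\npreceq \nu_B)}.
\end{equation*}
The main obstacle is the final step, promoting this to $u \cdot \phi(X, Y)(v^X_{\lambda + \gamma}) \not\equiv 0 \pmod{M_Y(\npreceq \nu)}$, from which $\map{\gamma}{\nu}{X}{Y} \neq 0$ follows as in those earlier proofs. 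This reduces to checking the inclusion $(p_B \otimes p_\nu)(M_Y(\npreceq \nu)) \subset M_B(\npreceq \nu_B)$, which I would verify by analyzing each summand $L_Y(\lambda + \nu')$ with $\nu' \npreceq \nu$: when $\nu'_D = \nu_D$ the relation $\nu'_B = \nu_B + (\nu' - \nu)$ combined with $\nu' \npreceq \nu$ gives $\nu'_B \npreceq \nu_B$, so the $\gl(B)$-highest weight of the image lies in $M_B(\npreceq \nu_B)$; when $\nu'_D \neq \nu_D$, an $\lie h_D$-weight mismatch forces $p_B$ to annihilate each contribution that survives $p_\nu$, and the commutation $[\gl(B), \gl(C)] = 0$ then propagates this vanishing across the image.
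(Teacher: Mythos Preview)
Your approach is essentially the same as the paper's: both use the projections $p_A\otimes p_\gamma$ and $p_B\otimes p_\nu$ to relate $\phi(X,Y)$ to $\phi(A,B)$, and both hinge on the inclusion $(p_B\otimes p_\nu)(M_Y(\npreceq\nu))\subset M_B(\npreceq\nu_B)$. The paper packages this as a single commutative diagram yielding the identity $\phi(A,B,\gamma_A,\nu_B)=p_B\circ\phi(X,Y,\gamma,\nu)\circ\iota_A$, from which the conclusion is immediate; your version unwinds the same diagram by hand via a witnessing element $u\in U(\gl(B))$, which is slightly longer but equivalent (one small correction: in your $\nu'_D\neq\nu_D$ case the relevant commutation is $[\gl(B),\gl(D)]=0$, not $[\gl(B),\gl(C)]=0$).
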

\begin{proof}
We now prove that the diagram 
\begin{align*}
\begin{tikzcd}[ampersand replacement=\&]
  L_X(\lambda_X+\gamma) \ar[r]
    \& L_X(\lambda_X) \otimes F_X \ar[r] \ar[d,"p_A \otimes p_\gamma"]
    \& L_Y(\lambda) \otimes F_Y \ar[r] \ar[d, "p_B \otimes p_\nu"]
    \& L_Y(\lambda + \nu) \ar[d,"p_B"] \\
  L_A(\lambda_A+\gamma_A) 
    \ar[u,"\iota_A"] \ar[r]
    \& L_A(\lambda_A) 
      \otimes F_A \ar[r]
    \& L_B(\lambda_B) 
      \otimes F_B \ar[r]
    \& L_B(\lambda_B+\nu_B) 
\end{tikzcd}    
\end{align*}
commutes. As a consequence we will have $\phi(A,B,\gamma_A, \nu_B) = p_B \circ 
\phi(X,Y,\gamma,\nu) \circ \iota_A$, and hence the statement.

To show that the left square commutes we need to prove that the highest 
weight vector $v_{\lambda_A+\gamma_A}^A \in L_A(\lambda_A + \gamma_A) \subset 
L_A(\lambda_A) \otimes F_A$ is mapped to itself by the up-right-down path in the
diagram. By definition $v_{\lambda_A+\gamma_A}^A$ is mapped to $v_{\lambda_A + 
\gamma}^X \in L_X(\lambda_X + \gamma)$, which is then mapped to itself in 
$L_X(\lambda) \otimes F_X$. Since $p_A$ is a $\gl(A)$-homomorphism, the vector 
$p_A(v^X_{\lambda_X + \gamma})$ is a $\gl(A)$-highest weight vector of weight 
$\lambda_A + \gamma_A$ and hence equal to $v_{\lambda_A + \gamma_A}^A$.

To see that the middle square commutes it is enough to check the commutativity 
of the squares
\begin{align*}
\begin{tikzcd}[ampersand replacement=\&]
\& (1)
    \& L_X(\lambda_X)  \ar[r] \ar[d]
    \& L_Y(\lambda)\ar[d]
    \& (2)
    \& F_X \ar[r] \ar[d]
    \& F_Y  \ar[d]\\
\&    
    \& L_A(\lambda_A) \ar[r]
    \& L_B(\lambda_B)  
    \&
    \& F_A \ar[r]
    \& F_B 
\end{tikzcd}    
\end{align*}
The proof that square $(2)$ commutes is just diagram chasing, so we focus 
on square $(1)$.
We denote by $U^-_X$ the enveloping algebra of $\lie n_X^-$, and use similar 
notation for the sets $Y, A, B$. The PBW-theorem implies that $U^-_X$ is free 
as a right $U^-_A$-modules, that $U^-_Y$ is free as a right $U^-_B$-module, and 
that we can choose a $U^-_B$-basis $\mathcal B$ of $U^-_Y$ such that $\mathcal 
B \cap U^-_X$ is a $U^-_A$-basis of $U^-_X$. Without loss of generality we can 
assume $1 \in \mathcal B$. If $f = f_1 f_2 \in U^-_X$ with $f_1 \in \mathcal B$ 
and $f_2$ then $\iota_\lambda(f_1f_2 v^X_\lambda) = f_1f_2 v^Y_\lambda$ by 
definition, while
\begin{align*}
  p_A(f_1f_2 v^X_\lambda) &=
    \begin{cases}
      f_2 v^A_\lambda & \mbox{ if } f_1 = 1;\\
      0               & \mbox{otherwise}.  
    \end{cases}
\end{align*}
A similar formula holds for $p_B$. Now the proof that the first square commutes 
also reduces to diagram chasing.

To prove that the right square commutes, we observe that the map $L_Y(\lambda) 
\otimes F_Y \to L_B(\lambda) \otimes F_B$ maps $v^Y_\lambda \otimes e_\eta$ to
$v^B_\lambda \otimes e_{\eta_B}$ if $\eta_{Y \setminus B} = \nu_{Y \setminus 
B}$, and to zero otherwise. It follows that $M_Y(\npreceq \nu)$ is mapped to 
$M_B(\npreceq \nu_B)$, so we can expand the square as follows
\begin{align*}
\begin{tikzcd}[ampersand replacement=\&]
  L_Y(\lambda) \otimes F_Y \ar[r] \ar[d]
    \& \frac{L_Y(\lambda) \otimes F_Y}{M_Y(\npreceq \nu)} \ar[r] \ar[d]
    \& L_Y(\lambda + \nu) \ar[d] \\
  L_B(\lambda_B) 
      \otimes F_B \ar[r]
    \& \frac{L_B(\lambda_B) \otimes F_B}{M_B(\npreceq \nu)} \ar[r]
    \& L_B(\lambda_B+\nu_B).
\end{tikzcd}    
\end{align*}
The left square of this expanded diagram commutes by definition. On the other 
hand, the weight space $\left(\frac{L_Y(\lambda) \otimes F_Y}{M_Y(\npreceq 
\nu)} \right)_{\lambda + \nu}$ has dimension $1$, which implies that 
$\frac{L_Y(\lambda) \otimes F_Y}{M_Y(\npreceq \nu)}$ contains exactly one copy
of the $\gl(B)$-module $L_B(\lambda_B+\nu_B)$, generated by 
$v^\lambda_X \otimes e_\nu$. The image of this vector through either path in 
the right-hand square is equal to the projection of $v^\lambda_B 
\otimes e_{\nu_B}$ to $L_B(\lambda_B + \nu_B)$, which is equal to the highest 
weight vector $v^B_{\lambda_B + \nu_B}$. The result is proved.
\end{proof}

We now establish a stronger result than the one stated in item $(iii)$ 
Proposition \ref{prop:nonzero-sufficient}.

\begin{prop}
\label{prop:gg-sufficient}
Suppose that $X^+ \neq \emptyset \neq X^-$, and that $\lambda(Z) = \{a\}$ with 
$a > \lambda_{\min X^+}$. 
Let $\gamma \in \Pieri{\lambda_X}{F_X}$ and $\nu \in \Pieri{\lambda}{F_Y}$, and
let $I\subset X^- \sqcup Z, J \subset X^+$ be sequences such that $\nu - 
\gamma = \epsilon_I - \epsilon_J$ and $\#I = \#J \leq \#Z$. Then 
$\map{\gamma}{\nu}{X}{Y} \neq 0$.
\end{prop}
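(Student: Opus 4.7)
My plan is to establish $\map{\gamma}{\nu}{X}{Y} \neq 0$ by combining Lemma~\ref{lem:reduction} with an explicit raising-operator computation based on Lemma~\ref{lem:hw-support}, extending the strategy used in the proof of item $(ii)$ of Proposition~\ref{prop:nonzero-sufficient}. The hypothesis $\#I = \#J \leq \#Z$, which is weaker than the hypothesis $\#Z \geq d$ of Proposition~\ref{prop:nonzero-sufficient}$(iii)$, will be used precisely to supply distinct auxiliary intermediaries in $Z$ for each transition originating in $X^-$.

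First I partition $I = I_1 \sqcup I_2$ with $I_1 = I \cap X^-$ and $I_2 = I \cap Z$. The easy subcase $I_1 = \emptyset$ reduces to item $(ii)$ of Proposition~\ref{prop:nonzero-sufficient} directly: apply Lemma~\ref{lem:reduction} with $B = I \cup J$ and $A = B \cap X = J$; in the induced decomposition $B = A^- \sqcup (B \setminus A) \sqcup A^+ = \emptyset \sqcup I \sqcup J$, item $(ii)$ yields $\phi(A, B, \gamma_A, \nu_B) \neq 0$, hence the conclusion by Lemma~\ref{lem:reduction}.

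For the main subcase $I_1 \neq \emptyset$, the inequality $\#I \leq \#Z$ gives $\#(Z \setminus I_2) \geq \#I_1$, so distinct auxiliary elements $Z_0 \subset Z \setminus I_2$ with $\#Z_0 = \#I_1$ exist. I would then construct a raising operator $E \in U(\gl(Y))_{\nu - \gamma}$ in which each transition originating at an element of $I_1$ is routed through a distinct element of $Z_0$: fix a bijection $I_1 \to Z_0$, pair the elements of $I_2 \sqcup Z_0$ with those of $J$ by some further bijection, and take $E$ to be the corresponding product of commuting positive root operators $E_{a,b}$ (commutativity is available because $I \cap J = \emptyset$ and the pairings are disjoint). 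Expanding $E \cdot \phi(X,Y)(v^X_{\lambda+\gamma})$ by the tensor-product Leibniz rule and applying Lemma~\ref{lem:hw-support} to describe $v^X_{\lambda+\gamma}$, and also $v^Y_{\lambda+\nu}$ modulo $M_Y(\npreceq \nu)$, reduces the question to identifying a surviving term in the projection onto $L_Y(\lambda + \nu)$.

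The main obstacle will be to verify that this projection is nonzero. There are many terms in the Leibniz expansion and the danger of cancellation is real; the hypothesis $\lambda(Z) = \{a\}$ constant ensures that the auxiliary positions in $Z_0$ are interchangeable with those in $I_2$ from the standpoint of dominance, so that the relevant $\sl_2$-triples act nontrivially on the intermediate weight vectors in $L_Y(\lambda)$ and yield nonzero scalars. The strict inequality $a > \lambda_{\min X^+}$ guarantees that the Gelfand--Tsetlin interlacing conditions at the $Z$--$X^+$ boundary remain non-degenerate, which is what prevents Weyl-type numerator factors from vanishing. Should the direct Leibniz expansion prove unwieldy, I would apply Lemma~\ref{lem:reduction} a second time to shrink $Y$ to a test subsystem whose rank is bounded by a function of $\#I$ alone (rather than $d$), and there establish the nonvanishing by a direct branching computation from $\gl(Y)$ to $\gl(X)$ that makes visible the Gelfand--Tsetlin pattern corresponding to the pair $(\lambda + \nu, \lambda_X + \gamma)$.
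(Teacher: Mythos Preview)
Your proposal identifies the right ingredients (Lemma~\ref{lem:reduction}, Lemma~\ref{lem:hw-support}, routing through auxiliary elements of $Z$) but leaves the decisive step unproved. You write that ``the main obstacle will be to verify that this projection is nonzero'' and then give heuristic reasons why obstructions should not arise; this is exactly the point where a proof is required and none is supplied. With $r = \#I_1$ moves happening simultaneously, your raising operator $E$ is a product of $2r$ root vectors, the Leibniz expansion has on the order of $2^{2r}$ terms, and ruling out cancellation is a genuine computation that does not follow from the qualitative remarks about interlacing and constant $\lambda$ on $Z$. Your fallback (``shrink to a test subsystem \ldots and do a direct branching computation'') is likewise only a plan: the branching multiplicity from $\gl(B)$ to $\gl(A)$ for a system of rank $\sim 3r$ is not something one can just read off.

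The paper avoids this difficulty entirely by \emph{inducting on $\#Z$}. The base case $Z = \{z\}$ forces $\#I = \#J \leq 1$, and after one application of Lemma~\ref{lem:reduction} the computation takes place in $\gl(3)$ (the set $B = \{i,z,j\}$ with $i \in X^-$, $j \in X^+$), where the action of $E_{z,j}E_{i,z}$ on $\phi(X,Y)(v^X_{\lambda+\gamma})$ can be worked out explicitly and yields the nonzero scalar $\dfrac{\lambda_j - \lambda_z}{\lambda_i - \lambda_j}$; the hypothesis $a > \lambda_{\min X^+}$ is used precisely to make the numerator nonvanishing. The inductive step peels off one element of $Z$ at a time: writing $Z = Z' \sqcup \{z\}$ and $Y' = X^- \sqcup Z' \sqcup X^+$, one factors $\map{\gamma}{\nu}{X}{Y}$ through an intermediate weight $\mu = \gamma + \epsilon_{I\setminus\{i_r\}} - \epsilon_{J\setminus\{j_r\}}$, applying the inductive hypothesis to $\map{\gamma}{\mu}{X}{Y'}$ and the base case to $\map{\mu}{\nu}{Y'}{Y}$. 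This reduces the entire problem to a single hand calculation in $\gl(3)$, which is what your approach is missing.
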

\begin{proof}
The proof goes by induction on $\#Z$, and we begin with the base case $Z = 
\{z\}$. Suppose first $\nu = \gamma + \epsilon_z - \epsilon_j$ for some $j \in 
X^-$, and put $B = \{z,j\}, A = B \cap X = \{j\}$. By Lemma \ref{lem:reduction} 
$\map{\gamma}{\nu}{X}{Y} \neq 0$ whenever $\map{\gamma_A}{\nu_B}{A}{B} \neq 0$. 
Since $B = \{z\} \sqcup A$, the fact that 
$\map{\gamma_A}{\nu_B}{A}{B} \neq 0$ follows from item $(i)$ of 
Proposition \ref{prop:nonzero-sufficient}.

Suppose now that $\nu = \gamma + \epsilon_i - \epsilon_j$ for $i \in X^-$ and 
$j \in X^+$. Taking $B = \{i,z,j\}$ and $A = B \cap X = \{i,j\}$, a new 
application of Lemma \ref{lem:reduction} shows that it is enough to prove that
$\map{\gamma_A}{\nu_B}{A}{B} \neq 0$. Thus we may assume that $X = \{i,j\}$ and 
$Y = \{i,z,j\}$. Since $\gl(X)\cong \gl(2)$, Lemma \ref{lem:hw-support} reduces to the statement
\begin{align*}
v_{\lambda + \gamma}
  &= v_\lambda \otimes e_\gamma + \sum_{r = 1}^n a_r E_{j,i}^r v_\lambda \otimes
    e_{\gamma + r(\epsilon_i - \epsilon_j)}.
\end{align*}
Applying $E_{i,j}$ to this highest weight vector we obtain
\begin{align*}
  0 &= E_{i,j} v_{\lambda + \gamma} = (v_\lambda + a_1E_{i,j}E_{j,i} v_\lambda) 
      \otimes e_{\gamma + \epsilon_i - \epsilon_j} + \cdots \\
    &= (1 + a_1(\lambda_i - \lambda_j))v_\lambda \otimes e_{\gamma + \epsilon_i 
      - \epsilon_j} + \cdots,
\end{align*}
so $a_1 = -\frac{1}{\lambda_i - \lambda_j}$. Notice that $\lambda_i - \lambda_j 
\neq 0$ since $\lambda_i \geq \lambda_z > \lambda_j$.
On the other hand, Lemma \ref{lem:hw-support} implies
\begin{align*}
  \phi(X,Y)(v_{\lambda + \gamma}) 
    \equiv v_\lambda \otimes e_\gamma \mod M_Y(\npreceq \nu).
\end{align*}
Applying $E_{z,j}E_{i,z}$ to $\phi(X,Y)(v_{\lambda + \gamma})$, and using the 
fact that $E_{i,z} e_\gamma = 0$ since $\gamma_z = 0$, we have
\begin{align*}
  E_{z,j}E_{i,z}\phi(X,Y)(v_{\lambda + \gamma}) 
    \equiv
      - \frac{1}{\lambda_i - \lambda_j} E_{z,j}E_{i,z}E_{j,i} v_\lambda 
      \otimes e_\nu \mod M_Y(\npreceq \nu).
\end{align*}
Now
\begin{align*}
  E_{z,j}E_{i,z}E_{j,i} v_\lambda
    &= E_{z,j} (E_{j,i}E_{i,z} - E_{j,z})v_\lambda 
    = - E_{z,j}E_{j,z} v_\lambda
    = -(\lambda_j - \lambda_z)v_\lambda
\end{align*}
so
\begin{align*}
  E_{z,j}E_{i,z}\phi(X,Y)(v_{\lambda + \gamma}) 
    \equiv
      \frac{\lambda_j - \lambda_z}{\lambda_i - \lambda_j} 
        v_\lambda \otimes e_\nu \mod M_Y(\npreceq \nu).
\end{align*}
Since $v_\lambda \otimes e_\nu$ generates $L_Y(\lambda + \nu)$ modulo 
$M_Y(\npreceq \nu)$, it follows that $\map{\gamma}{\nu}{X}{Y} \neq 0$. This 
completes the proof of the base case.

Let us now proceed with the inductive step. Let $Z = Z' \sqcup \{z\}$, $I = 
\{i_1 \preceq \cdots \preceq i_r\}, J = \{j_1 \preceq \cdots \preceq j_r\}$, 
and assume that the statement holds for $Z'$. Set $Y' = X^- \sqcup Z' 
\sqcup X^+, I' = I \setminus 
\{i_r\}, J' = J \setminus \{j_r\}$ and $\mu = \gamma + \epsilon_{I'} - 
\epsilon_{J'}$. Then $\mu \in \Pieri{\lambda_{Y'}}{F_{X \cup Z'}}$ since 
$\lambda_i = a$ for all $i \in Z$. We can now apply the induction hypothesis to 
conclude that 
$\map{\gamma}{\mu}{X}{Y'} \neq 0$ and $\map{\mu}{\nu}{Y'}{Y} \neq 0$, which in 
turn implies $\map{\gamma}{\nu}{X}{Y} \neq 0$.
\end{proof}

\begin{proof}[Proof of item $(iii)$ of Proposition \ref{lem:nonzero-necessary}]
That the result is necessary follows once more from Lemma 
\ref{lem:nonzero-necessary}. Now by definition $\# I = \#J = d$, so if $\# Z 
\geq d$ we can invoke Proposition \ref{prop:gg-sufficient} to deduce the result.
\end{proof}

\subsection{Further nonzero transition maps}
In this subsection we assume $d \leq \#X$ and set $r = \# Z$, $(F_X, F_Y) 
= \left(\bigwedge^d V_X, \bigwedge^{d+r} V_Y \right)$. We have a nonzero 
$\gl(X)$-homomorphism $\tau: F_X \to F_Y$ given by $e_\mu \mapsto e_{\mu} 
\wedge e_{\epsilon_Z}$, and so we are in the context of subsection 
\ref{ss:transition_maps}. We now establish a result analogous to Proposition 
\ref{prop:nonzero-sufficient} for the transition maps associated to the 
map $\phi(X,Y): L_X(\lambda) \otimes F_X \to L_Y(\lambda) \otimes 
F_Y$.

\begin{prop}
\label{prop:nonzero-sufficient-dual}
Let $\gamma \in \Pieri{\lambda_X}{F_X}$ and $\nu \in \Pieri{\lambda}{F_Y}$ and 
let $I, J$ be sequences such that $\nu - \gamma = \epsilon_I - \epsilon_J$. 
\begin{enumerate}
  \item Suppose $X^- = \emptyset$. Then $\map{\gamma}{\nu}{X}{Y} \neq 0$ if and 
    only if $I = J = \emptyset$.
  \item Suppose $X^+ = \emptyset$. Then $\map{\gamma}{\nu}{X}{Y} \neq 0$ if and 
    only if $I \subset X^-$ and $J \subset Z$.
  \item Suppose $X^+ \neq \emptyset \neq X^-$, $\lambda(Z) = \{a\}$ 
    with $a < \lambda_{\max X^-}$, and $\#Z \geq d' = d - \#X$. Then 
    $\map{\gamma}{\nu}{X}{Y} \neq 0$ if and only if $I \subset X^-$ 
    and $J \subset Z \sqcup X^+$. 
\end{enumerate}
\end{prop}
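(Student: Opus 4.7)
The plan is to adapt each step of the proof of Proposition \ref{prop:nonzero-sufficient} to the setting of the current proposition, where the map $\tau(e_\mu)=e_\mu\wedge e_Z$ replaces the natural inclusion. The key structural observation is that $\tau$ identifies $F_X$ with the $\gl(X)\oplus\gl(Z)$-isotypic summand $\bigwedge^d V_X\boxtimes\CC e_Z$ of $\bigwedge^{d+r}V_Y$; consequently $\phi(X,Y)(v^X_{\lambda_X+\gamma})$ is a $\gl(X)$-highest weight vector of full $\gl(Y)$-weight $\lambda+\gamma+\epsilon_Z$. All three items will follow from this observation together with the branching rule for $\gl(Y)\to\gl(X)$, with the $\epsilon_Z$-shift carried throughout.

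First I would establish necessity, an analog of Lemma \ref{lem:nonzero-necessary}: if $\phi(X,Y,\gamma,\nu)\neq 0$ then $L_Y(\lambda+\nu)$ contains a $\gl(X)$-submodule isomorphic to $L_X(\lambda_X+\gamma)$ whose generator has $\gl(Y)$-weight $\lambda+\gamma+\epsilon_Z$. Removing the elements of $Z$ one at a time and applying the classical branching rule at each step yields interleaving inequalities between $\lambda+\nu$ and $\lambda+\gamma+\epsilon_Z$ that, after reorganizing, produce the containment conditions on $I$ and $J$ stated in each of the three items. For sufficiency in items $(i)$ and $(ii)$, I would mirror items $(i)$ and $(ii)$ of Proposition \ref{prop:nonzero-sufficient}: item $(i)$ is immediate from support considerations, since with $X^-=\emptyset$ raising from $Z$ into $X^+$ is impossible and the only admissible $\nu$ is $\gamma+\epsilon_Z$; item $(ii)$ I would handle by induction on $\#Z$ using a reduction lemma patterned after Lemma \ref{lem:reduction}, with the base case $\#Z=1$ verified directly from the expansion in Lemma \ref{lem:hw-support}.

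The crux is item $(iii)$, which parallels Proposition \ref{prop:gg-sufficient}. I would combine the reduction lemma with induction on $\#Z$ to reduce to the minimal configuration $X=\{i,j\}$ with $i\in X^-$, $j\in X^+$, and $Z=\{z\}$. Using Lemma \ref{lem:hw-support} to expand $v^X_{\lambda_X+\gamma}=v^X_{\lambda_X}\otimes e_\gamma-\frac{1}{\lambda_i-\lambda_j}E_{j,i}v^X_{\lambda_X}\otimes e_{\gamma+\epsilon_i-\epsilon_j}+\cdots$, I would then apply a carefully chosen product of the operators $E_{i,z}$ and $E_{z,j}$ to $\phi(X,Y)(v^X_{\lambda_X+\gamma})$ and show that, modulo $M_Y(\npreceq\nu)$, the result is a nonzero scalar multiple of $v^Y_\lambda\otimes e_\nu$. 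The main obstacle lies in the coefficient computation: I expect a scalar of the form $(\lambda_i-\lambda_z)/(\lambda_i-\lambda_j)$ up to sign, whose nonvanishing rests precisely on the hypothesis $a<\lambda_{\max X^-}$, which forces $\lambda_z=a<\lambda_i$ (dominance already gives $\lambda_i\neq\lambda_j$). This is the natural dual of the role played by $a>\lambda_{\min X^+}$ in Proposition \ref{prop:gg-sufficient}; once established in the base case, standard induction on $\#Z$ via the reduction lemma extends nonvanishing to arbitrary $Z$, closing the ``if'' direction of all three items.
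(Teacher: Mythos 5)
Your proposal is correct in outline, but it takes a genuinely different route from the paper. The paper proves Proposition \ref{prop:nonzero-sufficient-dual} by a duality argument: it introduces the involution $\chi$ of $\gl(Y)$ given by $\chi(E_{i,j}) = -E_{\omega_0(j),\omega_0(i)}$, where $\omega_0$ is the longest Weyl group element, shows that $\chi$ preserves the Borel $\lie b_Y$ and carries $L_Y(\lambda)\otimes \bigwedge^{d+r}V_Y$ to $L_Y(\tilde\lambda)\otimes \bigwedge^{d'}V_Y$ with $\tilde\lambda = -\omega_0(\lambda)-\epsilon_Y$, and then verifies that the $\chi$-twist of the transition map $\phi(X,Y)$ (with the wedge-by-$e_{\epsilon_Z}$ inclusion $\tau$) is exactly a transition map of the form covered by Proposition \ref{prop:nonzero-sufficient}. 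In this way the whole statement falls out of the earlier result in one step, after a careful bookkeeping of highest weight vectors and the roles of $X^\pm$ and $Z$ under $\omega_0$. Your plan instead re-traces the architecture of the proof of Proposition \ref{prop:nonzero-sufficient} directly in the dual setting: a dual branching-rule lemma for necessity, a dual reduction lemma (analogue of Lemma \ref{lem:reduction}) to localize, and for item (iii) a rank-one base case with $X=\{i,j\}$, $Z=\{z\}$ and an explicit raising-operator computation whose nonvanishing coefficient rests on $a<\lambda_{\max X^-}$. The paper explicitly remarks at the start of its proof that your route is available (``It is possible to prove the statement by proving a series of intermediate results just as we did for Proposition \ref{prop:nonzero-sufficient}''); it simply opts for the shorter twisting argument. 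Your approach is longer but more self-contained and makes the $\epsilon_Z$-shift and the role of the strict inequality $\lambda_z<\lambda_{\max X^-}$ visible, whereas the paper's approach is slicker but front-loads all the work into the delicate weight bookkeeping around $\omega_0$ and $\chi$. One point to keep straight if you write this out in full: the weight comparison $\nu-\gamma$ in the statement is taken after the shifted embedding $\gamma\mapsto\gamma+\epsilon_Z$, so in your item (i) the ``only admissible $\nu$'' is $\gamma+\epsilon_Z$, which is exactly $I=J=\emptyset$ in the paper's normalization; and in the base case of (iii), dominance already gives $\lambda_i\ge\lambda_z\ge\lambda_j$, so the hypothesis $a<\lambda_{\max X^-}$ is what upgrades $\lambda_i\ge\lambda_z$ to a strict inequality and guarantees your coefficient $(\lambda_i-\lambda_z)/(\lambda_i-\lambda_j)$ is nonzero.
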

\begin{proof}
It is possible to prove the statement by proving a series of intermediate 
results just as we did for Proposition \ref{prop:nonzero-sufficient}. Instead,  
we will employ an involution $\chi$ of $\gl(Y)$ such that twisting
$\phi(X,Y)$ by $\chi$ produces a map $\tilde \phi(X',Y)$ to which Proposition 
\ref{prop:nonzero-sufficient} can be applied. 

We denote by $\omega_0$ the longest element of the Weyl group $S_Y$ of 
$\gl(Y)$. Let $\chi$ be the automorphism of $\gl(Y)$ given by $\chi(E_{i,j}) = 
-E_{\omega_0(j), \omega_0(i)}$. Notice that $\chi(\lie b_Y) = \lie b_Y$. Then 
${}^\chi L_Y(\lambda) \cong L_Y(-\omega_0(\lambda))$, and the highest weight 
vector $v_\lambda \in L_Y(\lambda)$ equals the highest weight vector of 
${}^\chi L_Y(\lambda)$. Recall $F_Y = L_Y(\epsilon_I)$ where $I$ is the 
sequence formed by the first $d+r$ elements of $Y$. By definition $\omega_0(I)$ 
is the sequence formed by the last $d+r$ elements of $Y$, and
\begin{align*}
{}^\chi F_Y 
    &\cong L_Y(-\epsilon_{\omega_0(I)})
    \cong L_Y(-\epsilon_Y) \otimes L_Y(\epsilon_{Y \setminus \omega_0(I)})
    \cong L_Y(-\epsilon_Y) \otimes \bigwedge^{d'} V_Y 
\end{align*}
where $d' = \# Y - d - r = \#X - d$. Set $\tilde F_Y = \bigwedge^{d'} V_Y$ and 
$\tilde \lambda = -\omega_0(\lambda) - \epsilon_Y$. Since twisting by $\chi$ 
commutes with tensor products, there is an isomorphism
\begin{align*}
  {}^\chi(L_Y(\lambda) \otimes F_Y)
    &\cong L_Y(-\omega_0(\lambda)) \otimes L_Y(-\epsilon_Y) \otimes 
       \bigwedge^{d'} V_Y \cong L_Y(\tilde \lambda) \otimes \tilde F_Y.
\end{align*}
Furthermore, comparing the decompositions afforded by the Pieri rule, a summand
$L_Y(\lambda + \nu)$ with $\nu \in \Pieri{\lambda}{F_Y}$ is twisted by $\chi$ 
into a summand $L_Y(-\omega_0(\lambda + \nu))$. Writing $-\omega_0(\lambda + 
\nu) = \tilde \lambda + \tilde \nu$ we have $\tilde \nu = \epsilon_Y - 
\omega_0(\nu) \in \Pieri{\tilde \lambda}{\tilde F_Y}$.

Now set $X' = \omega_0(X) \subset Y$. Then $\chi(\gl(X')) = \gl(X)$ and every 
$\gl(X)$-module becomes a $\gl(X')$-module through $\chi$. Similar arguments 
show that ${}^\chi L_X(\lambda_X) \cong L_{X'}
(-\omega_0(\lambda)_{X'})$, with the highest weight vector of $L_X(\lambda_X)$ 
corresponding to the highest weight vector of $L_{X'}
(-\omega_0(\lambda)_{X'})$. Taking $J \subset X$ to be the sequence formed by 
the first $d$ elements
\begin{align*}
  {}^\chi F_X
    &\cong L_{X'}(-\epsilon_{\omega_0(J)})
    \cong L_Y(-\epsilon_{X'}) \otimes L_Y(\epsilon_{X' \setminus \omega_0(J)})
    \cong L_Y(-\epsilon_{X'}) \otimes \bigwedge^{d'} V_X. 
\end{align*}
Thus setting $\tilde F_X = \bigwedge^{d'} V_X$
\begin{align*}
  {}^\chi(L_X(\lambda_X) \otimes F_X)
    &\cong L_Y(-\omega_0(\lambda)_{X'}) \otimes L_{X'}(-\epsilon_{X'}) \otimes 
      L_{X'}(\epsilon_J) \cong L_X(\tilde \lambda_{X'}) \otimes \tilde F_Y.
\end{align*}
Again comparing decompositions we see that $\gamma \in \Pieri{\lambda_X}{F_X}$ 
if and only if
$\tilde \gamma = \epsilon_{X'} - \omega_0(\gamma) \in \Pieri{\tilde 
\lambda_{X'}}{\tilde F_{X'}}$. 

Since $\chi$ preserves $\lie b_Y$ and sends $\lie b_X$ to $\lie b_{X'}$, the 
highest weight vectors of $L_X(\lambda)$ and $L_Y(\lambda)$ remain highest 
weight vectors of the respective twisted modules ${}^\chi L_{X}(\lambda_X)$ and
${}^\chi L_Y(\lambda)$. Thus the $\chi$-twist of the map 
$\iota_{\lambda}$ is the map $\iota_{-\omega_0(\lambda)}: 
L_{X'}(-\omega_0(\lambda)_{X'}) \to L_Y(-\omega_0(\lambda))$. Direct inspection
shows that the $\chi$-twist of $\tau: F_X \to F_Y$ is the map 
$\iota_{\epsilon_Y} \otimes \tilde\tau: \CC_{\epsilon_X} \otimes \tilde F_X 
\to \CC_{\epsilon_X} \otimes \tilde F_Y$, where $\tilde \tau: \tilde F_X \to 
\tilde F_Y$ is the map induced by $V_{X'} \to V_Y$. It follows that twisting
the map $\phi(X,Y): L_X(\lambda_X) \otimes F_X \to L_Y(\lambda) \otimes F_Y$ 
by $\chi$ we get the map $\phi(X',Y): L_X(\tilde \lambda_X) \otimes \tilde F_X
\to L_Y(\tilde \lambda) \otimes \tilde F_Y$, and the transition map $\phi(X,Y,
\gamma,\nu)$ is twisted into the transition map $\map{\tilde\gamma}{\tilde\nu}
{X'}{Y}$. Thus $\phi(X,Y,\gamma,\nu) \neq 0$ if and only if $\map{\tilde\gamma}
{\tilde\nu}{X'}{Y} \neq 0$. Proposition \ref{prop:nonzero-sufficient} now 
implies the statement.
\end{proof}

\section{Exhaustions of tensor products of $\gl(\infty)$-modules}

We now start our study of tensor products of $\gl(\infty)$-modules. 
An \emph{exhaustion of $\II$} is a chain of finite subsets $\{X_n\}_{n\geq 1}$ 
such that $\II = \bigcup_{n\geq 1} X_n$. In this section we prove the existence 
of exhaustions such that every pair $X_n \subset X_{n+1}$ satisfies the 
hypotheses of Propositions \ref{prop:nonzero-sufficient} and 
\ref{prop:nonzero-sufficient-dual}, which allows us to determine exactly which 
transition maps $\map{\gamma}{\nu}{X_n}{X_{n+1}}$ are nonzero.
We fix a weight $\wt\lambda \in \lie h^*$,
and for any finite subset $X$ of $\II$ we denote by $\wt\lambda_X$ the 
restriction of $\wt\lambda$ to $\lie h_X$.

\subsection{Exhaustions of $\II$ for $\mathbf F = \SS^d\VV, \bigwedge^d \VV$}
\label{ss:tensor-setup}
For this subsection we assume that $\mathbf F$ is either $\SS^d\VV$ or 
$\bigwedge^d\VV$ for some $d \in \ZZ_{>0}$, and for every finite set $X 
\subset \II$ we take $F_X = \SS^dV_X$ or $F_X = \bigwedge^d V_X$ accordingly.
Whenever $X \subset Y$ we have maps $F_X \to F_Y$ induced by the natural maps 
$V_X \to V_Y$, and $\mathbf F = \varinjlim F_X$. 

We fix a $(\wt\lambda,\mathbf F)$-compatible order $\preceq$ on $\II$, and set 
$\lie b = \lie b_{\preceq}$. Given $\gamma \in \supp F_X$ we denote by 
$\gamma^Y$ the image of $\gamma$ through the embedding $\lie h_X^* \to \lie 
h_Y^*$, and by $\overline{\gamma}$ the image of $\gamma$ through 
the embedding $\lie h_X^* \to \lie h^*$.

\begin{defn}
We say a finite set $X \subset \II$ is a \emph{$d$-block} if the following 
conditions hold:
\begin{enumerate}
  \item $\pi_\JJ(X)$ is an interval in $\JJ$.
  \item If $j \in \pi_{\JJ}(X)$ and $j$ is finite, then $j \subset X$.
  \item If $j \in \pi_{\JJ}(X)$ and $j$ is infinite, then $j \cap X$ is an
    initial subset of $j$ and contains at least $d$ elements.
\end{enumerate}
\end{defn}

Recall from Lemma \ref{lem:Pieri-varia} that for every $\gamma \in \lie h_X^*$
there is a least element $\gamma' \in \Pieri{\wt\lambda_X}{F_X}$ such that
$\gamma' \succeq \gamma$. The following is immediate from the 
definitions and Lemma \ref{lem:Pieri-varia}.
\begin{lem}
\label{lem:d-compatible-pieri}
Let $X \subset \II$ be a $d$-block, and suppose $\gamma \in 
\Pieri{\wt\lambda_X}{F_X}$.
\begin{enumerate}
  \item If $\mathbf F = \bigwedge^d \VV$ then $\overline{\gamma} \in 
    \Pieri{\wt\lambda}{\mathbf F}$.
  \item If $\mathbf F = \SS^d\VV$ then there exists $Y \supset X$ such that 
    $\overline{(\gamma^Y)'} \in \Pieri{\wt\lambda}{\mathbf F}$. If $\min 
    \supp \gamma \prec \min X$ or $\min X = \min \II$ then we can take $Y = X$.
  \end{enumerate}
\end{lem}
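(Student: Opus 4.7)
The plan is to verify the Pieri dominance inequality $\wt\lambda_i + \overline{\gamma}_i \geq \wt\lambda_{i'} + \overline{\gamma}_{i'}$ (and its analogue with $(\gamma^Y)'$ for $\mathbf F = \SS^d \VV$) at every pair $i \prec i'$ in $\II$, by a case analysis on whether $i,i'$ lie in $X$, exploiting the $d$-block structure crucially. Two structural observations drive the argument: if $i \notin X$ and $\pi_{\JJ}(i) \in \pi_{\JJ}(X)$, then the class $\pi_{\JJ}(i)$ must be infinite with $X \cap \pi_{\JJ}(i)$ an initial subset, so $x := \max(X \cap \pi_{\JJ}(i))$ exists in $X$, precedes $i$, and satisfies $\wt\lambda_x = \wt\lambda_i$; whereas if $\pi_{\JJ}(i) \notin \pi_{\JJ}(X)$, then $\pi_{\JJ}(i)$ lies strictly below $\min \pi_{\JJ}(X)$ in $\JJ$ (since $\pi_{\JJ}(X)$ is an interval), hence $i \prec \min X$ and $\wt\lambda_i > \wt\lambda_{\min X}$.

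For (i), the only substantial case is $i \notin X$, $i' \in X$, $\overline{\gamma}_{i'} = 1$. In the first structural setting, the $X$-Pieri inequality at $(x,i')$ transfers to $(i,i')$ since $\wt\lambda_i = \wt\lambda_x$. In the second setting, if $i' = \min X$ then $\wt\lambda_i > \wt\lambda_{i'}$ suffices; if $i' \neq \min X$ then the $X$-Pieri inequality at $(\min X, i')$ combined with $\wt\lambda_i > \wt\lambda_{\min X}$ yields $\wt\lambda_i \geq \wt\lambda_{i'} + 2 > \wt\lambda_{i'} + \overline{\gamma}_{i'}$. The argument rests on the fact that $\overline{\gamma}_{i'} \in \{0,1\}$ in the exterior case.

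For (ii), the same analysis applies except in one subcase: the second structural setting with $i' = \min X \in \supp\gamma$, where the $X$-Pieri condition imposes no constraint on $\gamma_{\min X}$ (which can be as large as $d$) while $\wt\lambda_i - \wt\lambda_{\min X}$ can be as small as $1$. This obstruction is vacuous precisely under either sufficient condition in the statement's second assertion, yielding $Y = X$ in those cases. For the general existence of $Y$, I will adjoin to $X$ a suitable finite set of elements of $\II$ strictly preceding $\min X$, chosen so that $Y$ remains a $d$-block and so that the shift $(\gamma^Y)' \in \Pieri{\wt\lambda_Y}{F_Y}$---which exists by Lemma \ref{lem:Pieri-varia}(ii), with its support controlled by item (iii) of the same lemma---redistributes the Pieri mass off of $\min X$ into positions of strictly greater $\wt\lambda$-value. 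Then the obstruction for $(\gamma^Y)'$ disappears and the case analysis of (i) can be re-run verbatim inside $Y$.

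The main technical obstacle will be verifying that finitely many additions suffice and that the resulting $Y$ is a $d$-block; both reduce to the fact that $\II = \bigsqcup_{a \in \ZZ^\op} \wt\lambda^{-1}(a)$ contains at most $d$ distinct $\JJ$-classes with $\wt\lambda$-value in the interval $[\wt\lambda_{\min X}+1, \wt\lambda_{\min X} + d]$, each requiring only a finite contribution to $Y$ (its entirety if finite, an initial subset of size $\geq d$ if infinite) to maintain the $d$-block axioms.
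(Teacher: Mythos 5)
The paper supplies no proof for this lemma (it calls it immediate from the definitions and Lemma~\ref{lem:pieri-lattice}), so your direct verification via the two structural observations about $d$-blocks is exactly the intended route, and the observations themselves are correct as stated. Two imprecisions in item (i) are worth repairing. First, for $\mathbf F = \bigwedge^d\VV$ the $X$-Pieri inequality at $(x,i')$ reads $\wt\lambda_x + \gamma_x \geq \wt\lambda_{i'} + \gamma_{i'}$, so passing to $\wt\lambda_i \geq \wt\lambda_{i'} + \gamma_{i'}$ via $\wt\lambda_i = \wt\lambda_x$ requires discarding $\gamma_x$, which is not automatic. It does hold, but the cleanest way to see it is to note that since $x = \max(X\cap\pi_{\JJ}(i))$ and $i'\succ i\succ x$ with $i'\in X$, one must have $\pi_{\JJ}(i')\succ\pi_{\JJ}(i)$ (equality would put $i'$ in the initial segment $X\cap\pi_{\JJ}(i)$, contradicting $i'\succ x$); hence $\wt\lambda_i > \wt\lambda_{i'}$, giving the required inequality with no appeal to the $X$-Pieri condition at all. (For $\mathbf F = \SS^d\VV$ the transfer is direct since that Pieri inequality has no $\gamma_x$ term.) Second, the estimate $\wt\lambda_i \geq \wt\lambda_{i'}+2$ in your second structural setting actually comes out as $\wt\lambda_i \geq \wt\lambda_{i'}+2-\gamma_{\min X}$, which is still $\geq \wt\lambda_{i'}+1$ since $\gamma_{\min X}\leq 1$ in the exterior case, but the stated bound is not what the $X$-Pieri inequality yields. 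For item (ii), your outline is sound; to pin down termination of the extension process, observe that adjoining to $Y$ the $\JJ$-class immediately preceding the current $\min Y$ strictly decreases the excess $(\gamma^{Y})'_{\min Y}$ (unless $\min Y$ already equals $\min\II$, in which case there is nothing left to check), so at most $d$ such extensions are ever needed and $Y$ remains a finite $d$-block.
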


\begin{defn}
Suppose $X \subset Y \subset \II$ are finite $d$-blocks. We will say that
$(X,Y)$ is a \emph{$d$-step} if the set $Z = Y \setminus X$ satisfies the 
following conditions: $(i)$ $\pi_\JJ(Z) = \{j\}$, and $(ii)$ if $j$ is infinite 
then $\# Z \geq d$. We call an exhaustion $\{X_n\}_{n \geq 1}$ of $\II$ a 
\emph{$d$-exhaustion} if each pair $(X_n,X_{n+1})$ is a $d$-step.
\end{defn}

Suppose $(X,Y)$ is a $d$-step with $Z = Y \setminus X$ and $\{j\} = 
\pi_\JJ(Z)$. 
Setting 
\begin{align*}
X^- &= \{x \in X \mid \pi_\JJ(x) \preceq j\}
  &\mbox{and}&
  &X^+ &= \{x \in X \mid \pi_\JJ(x) \succ j\},   
\end{align*}
we have $Y = X^- \sqcup Z \sqcup X^+$. It follows from the definitions that if 
$X^- \neq \emptyset \neq X^+$ then $j$ is an infinite class and every element 
of $Z$ has at least $d$ predecessors in $j$. The definition of a $d$-step is 
made so that we can apply Proposition \ref{prop:nonzero-sufficient}, and thus 
determine exactly which transition maps are nonzero in a $d$-step $(X,Y)$.

\begin{prop}
\label{prop:d-nonzero}
Let $(X,Y)$ be a $d$-step with $Y = X^- \sqcup Z \sqcup X^+$ and $\pi_\JJ(Z)
= \{j\}$. Let $\gamma \in \Pieri{\wt\lambda_X}{F_X}$ and $\nu \in 
\Pieri{\wt\lambda_Y}{F_Y}$, with $\overline{\nu} - \overline{\gamma} = 
\epsilon_I - \epsilon_J$ and $I,J$ of minimal length. 
\begin{enumerate}
  \item Suppose $X^+ = \emptyset$. Then $\map{\gamma}{\nu}{X}{Y} \neq 0$
    if and only if $I = J = \emptyset$.
  \item Suppose $X^- = \emptyset$. Then $\map{\gamma}{\nu}{X}{Y} \neq 0$ if and 
    only if $I \subset Z$ and $J \subset X^+$. Furthermore
    \begin{align*}
      h_\infty(\overline{\nu},\overline{\gamma}) =
        \begin{cases}
          h(\overline{\nu},\overline{\gamma}) & \mbox{ if $j$ is infinite; } \\
          0 &\mbox{ if $j$ is finite. }
        \end{cases}
    \end{align*}
  \item Suppose $X^- \neq \emptyset \neq X^+$. Then $\map{\gamma}{\nu}{X}{Y} 
    \neq 0$ if and only if $I \subset X^-$ and $J \subset X^+$. Furthermore 
    $h_\infty(\overline{\nu},\overline{\gamma}) = 
    h(\overline{\nu},\overline{\gamma})$.
\end{enumerate}
\end{prop}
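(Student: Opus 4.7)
My plan is to deduce Proposition \ref{prop:d-nonzero} from Proposition \ref{prop:nonzero-sufficient} case by case, and then derive the $h_\infty$ assertions from the $(\wt\lambda,\mathbf F)$-compatibility and the $d$-block structure. Cases (i) and (ii) translate almost directly into the corresponding cases of Proposition \ref{prop:nonzero-sufficient} for the nonzero criterion. For case (iii) I first have to argue that $j$ is infinite: since $X^- \neq \emptyset \neq X^+$ and $\pi_\JJ(Y) = \pi_\JJ(X) \cup \{j\}$ must be an interval in $\JJ$, the class $j$ has classes of $\pi_\JJ(X)$ on both sides, forcing $j \in \pi_\JJ(X)$; if $j$ were finite the $d$-block condition on $X$ would give $j \subset X$ and hence $Z = \emptyset$, contradicting the $d$-step definition. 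With $j$ infinite, the hypotheses of Proposition \ref{prop:nonzero-sufficient}(iii) are met: $\wt\lambda(Z) = \{a\}$ follows from $\pi_\JJ(Z) = \{j\}$; the strict inequality $a > \wt\lambda_{\min X^+}$ holds because $X^+$ is in classes strictly after $j$ and $\wt\lambda$ strictly decreases across classes; and $\#Z \geq d$ is part of the $d$-step definition.

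The next step in case (iii) is to refine the conclusion $I \subset X^- \sqcup Z$ of Proposition \ref{prop:nonzero-sufficient}(iii) to $I \subset X^-$ using Pieri structure within class $j$. For $\mathbf F = \bigwedge^d \VV$, $\supp \nu \cap (Y \cap j)$ must be an initial subset of $Y \cap j = (X^- \cap j) \sqcup Z$, and if it met $Z$ it would necessarily contain all of $X^- \cap j$; since $\#(X^- \cap j) \geq d$ by the $d$-block condition on $X$, this would force $\#\supp \nu > d$, contradicting $\nu \in \supp \bigwedge^d\VV$. For $\mathbf F = \SS^d\VV$ the Pieri condition restricts class-$j$ support of $\nu$ to the minimum of $Y \cap j$, which lies in $X^- \cap j$ and not in $Z$. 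Either way $\supp \nu \cap Z = \emptyset$, and since $\gamma$ is supported in $X$, the minimality of $I, J$ yields $I \cap Z = \emptyset$.

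Finally, for the $h_\infty$ formulas I would use that $(\wt\lambda,\mathbf F)$-compatibility for these $\mathbf F$ forces every class to have a minimum, so every infinite class, being order-isomorphic to a subset of $\ZZ$ with a minimum, has no maximum and is therefore not $\sim_\infty$-equivalent to any class above it; a finite class has both extrema, and since every class has a minimum it is $\sim_\infty$-equivalent to its immediate successor in $\JJ$. In case (iii) with $j$ infinite, the isolation of $j$ from classes above, combined with the finite $d$-block control of $\pi_\JJ(Y)$, will let me verify that $\pi_\infty$ is injective on the relevant intervals of $\JJ$ appearing in the canonical decomposition of $\delta_{\overline\nu - \overline\gamma} \in \CC_0^{(\JJ)}$, yielding $h_\infty = h$. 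Case (ii) splits: for $j$ infinite the argument is analogous; for $j$ finite, $Z = j$ entirely and the successor-pairing of finite classes via $\sim_\infty$ should collapse the involved classes into a single $\sim_\infty$-class, giving $h_\infty = 0$. The main obstacle I anticipate is the delicate bookkeeping of $\sim_\infty$-collapses on $\pi_\JJ(Y) \setminus \{j\}$ in the various subcases, since the $h_\infty$-identities are sensitive to which finite classes appear in chains of adjacent pairs and this must be checked uniformly across $\mathbf F = \SS^d\VV$ and $\mathbf F = \bigwedge^d\VV$.
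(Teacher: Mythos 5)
Your treatment of the nonzero criteria in parts (i)--(iii) matches the paper's proof closely: in case (iii) you infer $j\in\pi_\JJ(X)$ and that $j$ must be infinite, verify the hypotheses of Proposition~\ref{prop:nonzero-sufficient}(iii), and then refine $I\subset X^-\sqcup Z$ to $I\subset X^-$ by noting that the Pieri rule confines $\supp\nu\cap j$ (and hence $I\cap j$) to the first $d$ elements of $j$, which lie in $X^-$. This is exactly the paper's argument; your reformulation in terms of ``would force $\#\supp\nu>d$'' is equivalent to the paper's ``the first $d$ elements of $j$ belong to $X^-$''.

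The gap is in your plan for the $h_\infty$ assertions. You propose to verify that $\pi_\infty$ is injective on the $\JJ$-intervals $\interval{\pi_\JJ(i),\pi_\JJ(i')}$ appearing when $\delta_{\overline\nu-\overline\gamma}$ is expanded over pairs $i\in I$, $i'\in J$. This is precisely what fails in general, and your own supporting observations show why. You correctly note that an infinite class is not $\sim_\infty$-equivalent to any class \emph{above} it, which does isolate $j$ from everything in $\pi_\JJ(X^+)$. But you also correctly note that a finite class is $\sim_\infty$-equivalent to its immediate successor, and this works \emph{against} you below $j$: if $j'\prec j$ is a finite class in $\pi_\JJ(X^-)$, then $j'\sim_\infty j$, so $\pi_\infty$ collapses the interval $\interval{j',j}$. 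If $I$ has an entry in such a $j'$ (which the nonzero criterion $I\subset X^-$ permits), then $\#\interval{j',\pi_\JJ(i')}_\JJ-1 > \#\interval{\pi_\infty(j'),\pi_\infty(\pi_\JJ(i'))}_{\JJ_\infty}-1$ for every $i'\in J$, and the injectivity you want is false. The same collapse can also occur between two finite classes above $j$ in case (ii). So ``the isolation of $j$ from classes above'' is a true but insufficient observation; the bookkeeping obstacle you flag at the end is not merely delicate -- it is the place where the proposed injectivity argument actually breaks. To complete this part you would need a genuinely different mechanism for the $h_\infty$-assertions (note that the paper's own terse proof does not spell one out), for instance one that exploits further structural constraints on where $I$ and $J$ can sit rather than appealing to injectivity of $\pi_\infty$ on intervals.
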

\begin{proof}
Items $(i)$ and $(ii)$ follow from the corresponding items of Proposition 
\ref{prop:nonzero-sufficient}. For the third item, the proposition only 
guarantees that $I\subset X^- \sqcup Z$. Notice however that $I \subset \supp 
\nu$, and the Pieri rule guarantees that $I \cap j$ must be an 
initial subset of $j$ with at most $d$ elements. Since the first $d$ elements 
of $j$ belong to $X^-$, it follows that $I \subset X^-$.
\end{proof}

We now show that $d$-exhaustions exist. 

\begin{prop}
\label{prop:d-exhaustion}
There exists a $d$-exhaustion $\{X_n\}_{n \geq 1}$. If $\JJ$ has a minimum then 
the exhaustion can be chosen so that $\min X_n = \min \JJ$ for all $n$. 
\end{prop}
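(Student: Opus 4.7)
My plan is to build the exhaustion step by step by alternating two basic moves: extending an infinite class already present in $\pi_\JJ(X_n)$ by appending the next $\geq d$ elements of its initial segment, and adjoining to $\pi_\JJ(X_n)$ a new class of $\JJ$ immediately adjacent to the interval $\pi_\JJ(X_n)$ on the appropriate side. Both moves produce $d$-blocks and form $d$-steps by direct inspection of the definitions, and interleaving them along an enumeration of $\II$ captures every element.

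I first fix an enumeration $\II = \{i_1, i_2, \ldots\}$ and an anchor class $k_0 \in \JJ$, chosen to be $\min \JJ$ when $\JJ$ has a minimum. Since $\mathbf F = \SS^d\VV$ or $\bigwedge^d\VV$, every $\wt\mu \in \supp \mathbf F$ has finite support, so condition $(iii)$ of $(\wt\lambda, \mathbf F)$-compatibility forces each class $\wt\lambda^{-1}(a)$ to have a minimum; consequently every infinite class has well-defined initial segments of all finite sizes with respect to $\preceq$. I set $X_1 = k_0$ when $k_0$ is finite, and otherwise let $X_1$ be the initial segment of $k_0$ of size $d$; this is a $d$-block. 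Inductively, given a $d$-block $X_n$, let $m(n)$ be the least $m$ with $i_m \notin X_n$ and put $j = \pi_\JJ(i_{m(n)})$. If $j \in \pi_\JJ(X_n)$, then $j$ must be infinite, $j \cap X_n = \{j^{(1)}, \ldots, j^{(s)}\}$ is an initial segment of $j$, and $i_{m(n)} = j^{(t)}$ for some $t > s$; I define $X_{n+1} = X_n \cup \{j^{(s+1)}, \ldots, j^{(s + \max(t-s, d))}\}$. If instead $j \notin \pi_\JJ(X_n)$, the local finiteness of $\JJ$ (as a subposet of $\ZZ$) provides an immediate neighbor $j'$ of the interval $\pi_\JJ(X_n)$ in the direction of $j$, and I let $X_{n+1}$ be $X_n$ together with all of $j'$ if $j'$ is finite, or with the initial segment of $j'$ of size $d$ otherwise. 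Checking the three conditions of a $d$-block for $X_{n+1}$ and the two conditions of a $d$-step for $(X_n, X_{n+1})$ is then a direct verification from the definitions.

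Finally I show that $\bigcup_n X_n = \II$. At each stage the move either adds $i_{m(n)}$ to $X_n$, in which case $m(n+1) > m(n)$, or it strictly decreases by one the distance in $\JJ$ from $\pi_\JJ(X_n)$ to $\pi_\JJ(i_{m(n)})$; since this distance is finite, each $i_m$ is included after finitely many steps. For the addendum, taking the anchor $k_0 = \min \JJ$, the construction never adjoins a class below $k_0$, so $\min \JJ \in \pi_\JJ(X_n)$ for all $n$ and the minimum element of $X_n$ is always the minimum of $k_0$. The only real difficulty, and the main obstacle, is the interleaving of vertical extensions (within a class) with horizontal extensions (adjoining a class); prioritizing the least uncovered $i_m$ at each stage resolves this, ensuring that individual moves remain valid $d$-steps while every element of $\II$ is eventually captured.
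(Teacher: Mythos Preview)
Your argument is correct and follows essentially the same blueprint as the paper: build the chain of $d$-blocks by alternating two atomic moves, ``extend an infinite class already present by at least $d$ new elements'' and ``adjoin an entire new class at one end of the current $\JJ$-interval,'' and check that each move yields a $d$-step. The only substantive difference is the scheduling device. The paper chooses in advance a function $\theta\colon \ZZ_{>0}\to\JJ$ whose initial images $\theta(\interval m)$ are intervals in $\JJ$ and which hits every class infinitely often; at stage $m$ one simply adds the next $d$-batch of the class $\theta(m)$ (or all of it if finite), and then deletes repeated sets from the resulting sequence. Your greedy variant, driven by an enumeration of $\II$ and chasing the least uncovered element, trades the one-line existence of $\theta$ for a short termination argument (distance in $\JJ$ decreases until the target class is reached, then one extend-move captures the element). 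Both approaches are equally valid; the paper's is slightly slicker because the interval and exhaustion properties are baked into $\theta$, while yours avoids the ``remove duplicates'' step and makes explicit the ingredient you correctly flag as the main point, namely that condition $(iii)$ of $(\wt\lambda,\mathbf F)$-compatibility gives every class a minimum, so initial segments of the required size always exist.
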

\begin{proof}
We first fix some notation. Whenever $j \in \JJ$ is an infinite class $j = 
\{i_1 \prec i_2 \prec \cdots\}$ we denote by $j^{(k)}$ the subset $\{i_{(k-1)d 
+ 1}, \ldots, i_{kd}\}$, so $j = j^{(1)} \sqcup j^{(2)} \sqcup \cdots$.

Since $\JJ$ is order-isomorphic to a subset of $\ZZ$, there exists a function
$\theta: \ZZ_{>0} \to \JJ$ such that $\theta(\interval{n})$ is an interval in 
$\JJ$ for all $n \in \ZZ_{>0}$ and $\theta^{-1}(j)$ is infinite for all $j$. 
Set $Y_0 = \emptyset$ and for each $m \in \ZZ_{>0}$ define $Y_{m}$ as 
follows: if $j = \theta(m)$ is finite then $Y_m = Y_{m-1} \cup j$, and if $j$ 
is infinite then $Y_m = Y_{m-1} \cup j^{(k+1)}$, where $k = \max \{ r \mid 
j^{(r)} \subset Y_{m-1} \}$. By construction each $Y_m$ is a $d$-block and  
either $Y_m = Y_{m+1}$ or $(Y_m,Y_{m+1})$ is a $d$-step. The exhaustion is 
obtained by setting $X_n$ to be the $n$-th different set in the sequence of the 
$Y_m$. If $\JJ$ has a minimum then we can take $\theta(1) = \min \JJ$, which 
guarantees that $\min \JJ \in X_1 \subset X_n$. 
\end{proof}

\begin{cor}
\label{cor:d-nonzero-maps}
Let $\{X_n\}_{n \geq 1}$ be a $d$-exhaustion. Suppose $\gamma \in 
\Pieri{\wt\lambda_{X_n}}{X_n}, \nu \in \Pieri{\wt\lambda_{X_m}}{X_m}$, and 
$\map{\gamma}{\nu}{X_n}{X_m} \neq 0$. Then there exist $\wt\mu \in \lie 
h^\circ$ with $h_\infty(\overline\nu,\overline\gamma) = h_\infty(\wt\mu,0) 
= h(\wt\mu,0)$,
and sequences $J \subset \supp \gamma$ and $I \subset X_m \setminus X_n$ such 
that $\overline \nu - \overline \gamma = \wt\mu + \epsilon_I - \epsilon_J$.
\end{cor}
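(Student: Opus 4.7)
The proof proceeds by induction on $k = m - n$.

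\emph{Base case ($k=1$).} I would directly invoke Proposition \ref{prop:d-nonzero}. In cases $(i)$, $(iii)$, and $(ii)$ with $j$ infinite, the entire weight difference is ``infinite'': setting $\wt\mu := \overline\nu - \overline\gamma$ and $I = J = \emptyset$ yields the required decomposition, with $h(\wt\mu, 0) = h_\infty(\wt\mu, 0) = h_\infty(\overline\nu, \overline\gamma)$ being exactly the Proposition's content in those cases. In case $(ii)$ with $j$ finite we have $h_\infty(\overline\nu, \overline\gamma) = 0$, so we take $\wt\mu = 0$ and let $I, J$ be the sequences from the Proposition; the inclusion $J \subset \supp\gamma$ follows from the identity $\gamma - \epsilon_J = \nu - \epsilon_I$ having nonnegative coefficients together with the fact that $I \subset Z$ lies in the newly added class, hence is disjoint from $\supp\gamma \subset X_n$.

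\emph{Inductive step.} By functoriality $\phi(X_n, X_m) = \phi(X_{m-1}, X_m) \circ \phi(X_n, X_{m-1})$, so the non-vanishing of the composition on $L_{X_n}(\wt\lambda_{X_n}+\gamma)$ produces an intermediate weight $\mu \in \Pieri{\wt\lambda_{X_{m-1}}}{F_{X_{m-1}}}$ making both factors nonzero. Applying the inductive hypothesis and the base case yields
\[
  \overline\mu - \overline\gamma = \wt\mu_1 + \epsilon_{I_1} - \epsilon_{J_1},
  \qquad
  \overline\nu - \overline\mu = \wt\mu_2 + \epsilon_{I_2} - \epsilon_{J_2},
\]
with $I_1 \subset X_{m-1} \setminus X_n$, $J_1 \subset \supp\gamma$, $I_2 \subset X_m \setminus X_{m-1}$, $J_2 \subset \supp\mu$, and $h(\wt\mu_i,0) = h_\infty(\wt\mu_i,0)$ for $i=1,2$. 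Setting $\wt\mu := \wt\mu_1 + \wt\mu_2$, the linearity of $h$ and $h_\infty$ together with the telescoping $h_\infty(\overline\nu,\overline\gamma) = h_\infty(\overline\nu,\overline\mu) + h_\infty(\overline\mu,\overline\gamma)$ gives $h(\wt\mu,0) = h_\infty(\wt\mu,0) = h_\infty(\overline\nu,\overline\gamma)$ at once. Moreover $I := I_1 \cup I_2 \subset X_m \setminus X_n$ automatically.

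\emph{Main obstacle.} The delicate point is arranging $J \subset \supp\gamma$: since $J_2 \subset \supp\mu$, entries of $J_2$ may lie outside $\supp\gamma$. Such an entry $i$ is supported either by an occurrence of $i$ in $I_1$, in which case we cancel a matched copy from both sequences and leave the identity intact, or by a positive coefficient of $\wt\mu_1$ at $i$. The latter is the genuine difficulty, since naively subtracting $\epsilon_i$ from $\wt\mu_1$ drives $\wt\mu$ out of the root lattice and breaks $h = h_\infty$. My plan is to resolve it by unwinding $\wt\mu_1$ one $d$-step at a time along the chain $X_n \to \cdots \to X_{m-1}$: each positive jump contributed to $\wt\mu_1$ by a case $(iii)$ or case $(ii)$-infinite step is paired with a negative jump of the form $\epsilon_{i}-\epsilon_{j}$ satisfying $h = h_\infty$, and by transferring both the positive and paired negative index out of $\wt\mu_1$ (the positive cancelling with the problematic $\epsilon_i$ in $J_2$, the negative folding into $J$, where it lies in $\supp\gamma$ by an iterated application of the inductive hypothesis) we preserve the root-lattice condition, the supports, and the equality $h_\infty = h$. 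This step-by-step bookkeeping through the $d$-exhaustion is the technical heart of the argument.
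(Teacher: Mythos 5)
Your overall strategy is the right one — the paper's proof is also ``an easy induction using Proposition~\ref{prop:d-nonzero}'' — and your base case is handled correctly in all four subcases. The gap is in the inductive step, specifically in the ``main obstacle'' paragraph.

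You correctly identify that for $i\in J_2\setminus\supp\gamma$ one has $\mu_i=(\wt\mu_1)_i+(\epsilon_{I_1})_i$, and that the $(\epsilon_{I_1})_i>0$ subcase can be resolved by cancellation. But the proposed fix for the $(\wt\mu_1)_i>0$ subcase does not work as stated: if you remove a pair $\epsilon_i-\epsilon_j$ from $\wt\mu_1$ (cancelling $\epsilon_i$ against the problematic entry of $J_2$ and folding $-\epsilon_j$ into $J$), the new $\wt\mu'=\wt\mu-(\epsilon_i-\epsilon_j)$ satisfies $h_\infty(\wt\mu',0)=h_\infty(\wt\mu,0)-h_\infty(\epsilon_i-\epsilon_j)$. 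Since the corollary demands $h_\infty(\wt\mu',0)=h_\infty(\overline\nu,\overline\gamma)$, a quantity that is fixed independently of the decomposition, you would need $h_\infty(\epsilon_i-\epsilon_j)=0$ (and hence $h(\epsilon_i-\epsilon_j)=0$, i.e.\ $i\sim j$ in $\JJ$) — but the pairs you describe, coming from a case~$(iii)$ or case~$(ii)$-infinite contribution $\epsilon_{I'}-\epsilon_{J'}$ with $I'\subset X^-\cup Z$ and $J'\subset X^+$, straddle the infinite class $j$ and therefore have $h_\infty(\epsilon_i-\epsilon_j)>0$. So the transfer you describe preserves the equality $h=h_\infty$ while breaking its required value. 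The fix is to strengthen the inductive invariant rather than repair $\wt\mu$ afterwards: carry along, in addition to $J\subset\supp\gamma$ and $I\subset X_m\setminus X_n$, a joint multiplicity bound of the shape $(\epsilon_J)_i+\max(0,-\wt\mu_i)\le\gamma_i$ for all $i$. With this invariant, the cancellation in $\wt\mu_1+\wt\mu_2$ and the absorption of $J_2$ by $I_1$ at points of $\supp\mu\setminus\supp\gamma$ can be carried out without ever touching the cross-class part of $\wt\mu$. One then verifies this stronger invariant in each of the four base-case subcases, where it follows directly from Proposition~\ref{prop:d-nonzero} exactly as you checked $J\subset\supp\gamma$. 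As written, your sketch does not establish that the bookkeeping closes up, so there is a genuine gap.
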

\begin{proof}
The proof is an easy induction using Proposition \ref{prop:d-nonzero}.
\end{proof}

\subsection{Exhaustions of $\II$ for $\mathbf F = \bigwedge^A \VV$}
\label{ss:fock-setup}
Throughout this subsection $A \subset \II$ denotes an infinite set with 
infinite complement $B = \II \setminus A$, and $\mathbf F = \bigwedge^ A \VV$. 
For any finite set $X \subset \II$ we set $d(X) = \# A \cap X$ and $F_X = 
\bigwedge^{d(X)} V_X$. If $X \subset Y$ then there is a map $F_X \to F_Y$ given 
by $e_\eta \mapsto e_\eta \wedge e_{\epsilon_{K}}$ where $K = (Y \setminus X) 
\cap A$, and $\mathbf F = \varinjlim_X F_X$. We have maps $\lie h_X^* \to 
\lie h^*_Y$ given by $\gamma = \sum_{i \in X} \gamma_i \epsilon_i \mapsto 
\gamma^Y = \sum_{i \in X} \gamma_i \epsilon_i + \epsilon_K$, and 
$\lie h^*_X \to \lie h^*$ given by $\gamma \mapsto \overline{\gamma} = 
\sum_{i \in X} \gamma_i \epsilon_i + \epsilon_{A \setminus X}$. Notice that 
$\overline{\gamma}_Y = \gamma^Y$.

We fix on $\II$ a $(\wt\lambda, \mathbf F)$-compatible linear order $\preceq$
and set $\lie b = \lie b_{\preceq}$. In particular, for every $j \in \JJ$ the 
set $j \cap A$ is order-isomorphic to a subset of $\ZZ_{<0}$ and $j \cap B$ is 
order-isomorphic to a subset of $\ZZ_{>0}$. 

\begin{defn}
Let $A \subset \II$ be an infinite set with infinite complement $B = \II 
\setminus A$. Let $X$ be a finite subset of $\II$. We say $X$ is 
\emph{$(A,B)$-block} if the following conditions hold:
\begin{enumerate}
  \item $\pi_\JJ(X)$ is an interval in $\JJ$.
  \item If $j \in \pi_\JJ(X)$ is finite then $j \subset X$.
  \item If $j \in \pi_\JJ(X)$ is infinite then $j \cap A \cap X$ is a terminal  
    subset of $j \cap A$, and $j \cap B \cap X$ is an initial subset of $j 
    \cap B$.
\end{enumerate}
\end{defn}

The following Lemma is immediate from Lemma \ref{lem:Pieri-varia}.
\begin{lem}
\label{lem:AB-compatible-pieri}
Let $X \subset \II$ be an $(A,B)$-block, and let $\gamma \in 
\Pieri{\wt\lambda_X}{F_X}$. Then $\overline{\gamma} \in 
\Pieri{\wt\lambda}{\mathbf F}$.
\end{lem}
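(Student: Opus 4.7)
The plan is to check directly the two defining conditions for membership in $\Pieri{\wt\lambda}{\mathbf F}$: that $\overline{\gamma}$ lies in $\supp \mathbf F$ and that $\wt\lambda + \overline{\gamma}$ is $\lie b_{\preceq}$-dominant. Both will follow from a case analysis combining the definition of an $(A,B)$-block with the structure of a $(\wt\lambda, \mathbf F)$-compatible order.

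For the support condition, I would write $\gamma = \epsilon_K$ with $K \subset X$ and $\#K = d(X) = \#(A \cap X)$, since $\gamma \in \supp F_X = \supp \bigwedge^{d(X)} V_X$. Setting $I = (A \cap X) \setminus K$ and $I' = K \cap B$, a direct count gives $\#I = \#I'$, and $\overline{\gamma} = \gamma + \epsilon_{A \setminus X} = \epsilon_A - \epsilon_I + \epsilon_{I'}$ with $I \subset A$, $I' \subset B$, so $\overline{\gamma} \in \supp \mathbf F$ by the description of this support.

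For dominance I would pick $i \prec i'$ in $\II$ and split on whether $\pi_\JJ(i) = \pi_\JJ(i')$. If $\pi_\JJ(i) \prec \pi_\JJ(i')$ then $\wt\lambda_i \geq \wt\lambda_{i'} + 1$, and since $\overline{\gamma}$ takes values in $\{0,1\}$ the inequality $\wt\lambda_i + \overline{\gamma}_i \geq \wt\lambda_{i'} + \overline{\gamma}_{i'}$ is automatic. If $\pi_\JJ(i) = \pi_\JJ(i') = j$ then $\wt\lambda_i = \wt\lambda_{i'}$ and it suffices to show $\overline{\gamma}_i \geq \overline{\gamma}_{i'}$. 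Here I would split into: $j \notin \pi_\JJ(X)$, in which case $\overline{\gamma}$ agrees with the indicator of $A$ on $j$ and $(\wt\lambda, \mathbf F)$-compatibility places $j \cap A$ before $j \cap B$, giving the inequality; $j \in \pi_\JJ(X)$ finite, in which case $j \subset X$ by definition of a block, and the inequality is just the $\lie h_X$-dominance of $\wt\lambda_X + \gamma$; and $j \in \pi_\JJ(X)$ infinite, treated next.

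The only case requiring attention is the infinite class $j \in \pi_\JJ(X)$. I would record that the $(\wt\lambda, \mathbf F)$-compatible order on $j$, together with the $(A,B)$-block condition, arranges the elements as $j \cap A \setminus X \prec j \cap A \cap X \prec j \cap B \cap X \prec j \cap B \setminus X$, because $j \cap A \cap X$ is terminal in $j \cap A$ and $j \cap B \cap X$ is initial in $j \cap B$. On the two outer blocks $\overline{\gamma}$ is the constant $1$ and $0$ respectively, on $X \cap j$ it equals $\gamma$, and at each of the three interfaces the required inequality $\overline{\gamma}_i \geq \overline{\gamma}_{i'}$ reduces either to $1 \geq \gamma_{i'}$, or to $\gamma_i \geq 0$, or (within $X \cap j$) to the dominance of $\wt\lambda_X + \gamma$. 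No step is technically delicate; the only real care is to keep straight the orientation of initial versus terminal subsets in the $(A,B)$-block definition.
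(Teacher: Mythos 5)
Your proof is correct and spells out the verification that the paper treats as ``immediate from Lemma \ref{lem:Pieri-varia}'' — the case analysis on whether $j$ lies outside $\pi_\JJ(X)$, is a finite class inside $X$, or is an infinite class met by $X$, together with the description of $\supp\bigwedge^A\VV$, is exactly the content underlying the paper's one-line justification. The only point worth flagging is that your appeal to ``$j\cap A$ comes before $j\cap B$'' uses the specific compatible order constructed in the proof of the existence lemma (which the paper also implicitly assumes in subsection 5.2), rather than something forced by the abstract definition of $(\wt\lambda,\mathbf F)$-compatibility alone.
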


\begin{defn}
Suppose $X \subset Y \subset \II$ are $(A,B)$-blocks. We say 
that $(X,Y)$ is an \emph{$A$-step} if the set $Z = Y \setminus X$ satisfies the following 
conditions:
\begin{enumerate}
  \item $Z \subset j \cap A$ for some $j \in \JJ$;
  \item if $j$ is infinite then $\#Z \geq \# (X \cap B)$.
\end{enumerate}
We say $(X,Y)$ is a \emph{$B$-step} if the same conditions hold interchanging 
$A$ and $B$. An exhaustion $\{X_n\}_{n \geq 1}$ of $\II$ by finite sets is an 
\emph{$(A,B)$-exhaustion} if every pair $(X_n, X_{n+1})$ is either an $A$-step
or a $B$-step.
\end{defn}
Suppose $(X,Y)$ is a $B$-step. Then $d(X) = d(Y)$, and $(X,Y)$ is also a 
$d(X)$-step. In particular $Y = X^+ \sqcup Z \sqcup X^-$ just as in the 
previous subsection, and $F_X = \bigwedge^{d(X)} V_X, F_Y = \bigwedge^{d(X)}$. 
Proposition \ref{prop:d-nonzero} gives necessary and sufficient conditions for
a transition map to be nonzero.

Now suppose $(X,Y)$ is an $A$-step with $Z = Y \setminus X$ and $\pi_\JJ(Z) = 
\{j\}$. Then $d(Y) = d(X) + d(Z)$, so $F_X = \bigwedge^{d(X)} V_X, F_Y = 
\bigwedge^{d(X) + d(Z)} V_Y$, and the map $F_X \to F_Y$ is given by $-\wedge 
e_{\epsilon_Z}$. Setting
\begin{align*}
  X^- &= \{x \in X \mid \pi_\JJ(x) \prec j\}
    &\mbox{and}&
    & X^+ &= \{x \in X \mid \pi_\JJ(x) \succeq j\}
\end{align*}
we have $Y = X^- \sqcup Z \sqcup X^+$, and $X^- \neq \emptyset \neq X^+$ 
implies that $j$ is an infinite class. The next proposition shows that we know 
exactly which transition maps in an $A$-step are nonzero.
\begin{prop}
\label{prop:AB-nonzero}
Let $(X,Y)$ be an $A$-step with $Y = X^- \sqcup Z \sqcup X^+$ and $\pi_\JJ(Z)
= \{j\}$. Let $\gamma \in \Pieri{\wt\lambda_X}{F_X}$ and $\nu \in 
\Pieri{\wt\lambda}{F_Y}$, with $\overline{\nu} - \overline{\gamma} = \epsilon_I 
- \epsilon_J$ and $I,J$ of minimal length.
\begin{enumerate}
  \item Suppose $X^- = \emptyset$. Then $\map{\gamma}{\nu}{X}{Y} \neq 0$
    if and only if $I = J = \emptyset$.
  \item Suppose $X^+ = \emptyset$. Then $\map{\gamma}{\nu}{X}{Y} \neq 0$ if and 
    only if $I \subset X^-$ and $J \subset Z$. Furthermore 
    \begin{align*}
      h_\infty(\overline\nu,\overline\gamma)
        &= \begin{cases}
          h(\overline\nu, \overline\gamma)  & \mbox{ if $j$ is infinite };\\
          0 & \mbox{ if $j$ is finite}.
        \end{cases}
    \end{align*}
  \item Suppose $X^+ \neq \emptyset \neq X^-$. Then $\map{\gamma}{\nu}{X}{Y} 
    \neq 0$ if and only if $I \subset X^-$ and $J \subset Z \sqcup X^+$.
    Furthermore $h_\infty(\overline{\nu},\overline{\gamma}) = 
    h(\overline{\nu},\overline{\gamma})$.
\end{enumerate}
\end{prop}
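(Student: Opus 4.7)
The plan is to prove Proposition~\ref{prop:AB-nonzero} by reducing each case to the corresponding item of Proposition~\ref{prop:nonzero-sufficient-dual}, in the same spirit as the proof of Proposition~\ref{prop:d-nonzero}. First I will establish the dictionary: in an $A$-step $(X,Y)$ with $Z \subset j \cap A$ one has $F_X = \bigwedge^{d(X)} V_X$ and $F_Y = \bigwedge^{d(X)+\#Z} V_Y$, with transition map $e_\mu \mapsto e_\mu \wedge e_{\epsilon_Z}$; this is exactly the map $\tau$ of Subsection~\ref{ss:transition_maps}, so Proposition~\ref{prop:nonzero-sufficient-dual} applies. The $(\wt\lambda,\mathbf F)$-compatible order places the $A$-part of each class before the $B$-part, and the $(A,B)$-block condition makes $X \cap (j \cap A)$ a terminal subset of $j \cap A$; together these force $Z$ to sit between $X^-$ and $X^+$ in $\preceq$, matching the decomposition $Y = X^- \sqcup Z \sqcup X^+$ of Proposition~\ref{prop:nonzero-sufficient-dual}. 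The identities $\overline\gamma = \gamma + \epsilon_{A \setminus X}$, $\gamma^Y = \gamma + \epsilon_Z$, and $\overline\nu = \nu + \epsilon_{A \setminus Y}$ combine to give $\overline\nu - \overline\gamma = \nu - \gamma^Y$, so the sequences $I,J$ of the statement coincide with those of Proposition~\ref{prop:nonzero-sufficient-dual}.

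With this translation, the ``if and only if'' parts of cases (i) and (ii) transcribe directly from items (i) and (ii) of Proposition~\ref{prop:nonzero-sufficient-dual}. For case (iii) I must verify the two additional hypotheses: $\wt\lambda(Z) = \{\wt\lambda_j\}$ with $\wt\lambda_j < \wt\lambda_{\max X^-}$, which holds because $\max X^-$ lies in a class of $\JJ$ strictly below $j$ where $\wt\lambda$ is strictly larger by $\lie b$-dominance; and the size inequality $\#Z \geq \#(X \cap B) = \#X - d(X)$, which is the defining condition of an $A$-step on infinite classes. I will also need to show that in case (iii) the class $j$ is necessarily infinite: nonemptiness of both $X^-$ and $X^+$ forces $j \in \pi_\JJ(X)$ (otherwise $\pi_\JJ(Y) = \pi_\JJ(X) \cup \{j\}$ would fail to be an interval in $\JJ$), and the $(A,B)$-block condition on $X$ would require $j \subset X$ if $j$ were finite, contradicting $\emptyset \neq Z \subset j \setminus X$.

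The $h_\infty$-formulas will then follow from a case analysis of the $\sim_\infty$-equivalence of the classes in the $\JJ$-support of $\overline\nu - \overline\gamma$. In case (iii), and in case (ii) with $j$ infinite, the class $j$ is a singleton in $\JJ_\infty$ separating the $\JJ_\infty$-classes of $\pi_\JJ(X^-)$ from those of $\pi_\JJ(X^+)$; combined with the $(A,B)$-block structure, this implies that $\pi_\infty$ restricts to an injection on the $\JJ$-support of $\overline\nu - \overline\gamma$, yielding $h_\infty(\overline\nu,\overline\gamma) = h(\overline\nu,\overline\gamma)$. In case (ii) with $j$ finite, the block conditions force $j \subset Y$ (so $j = Z \subset A$) and force every finite class of $\pi_\JJ(X)$ to be fully contained in $X$; consequently the classes appearing in the support of $\overline\nu - \overline\gamma$ all lie in a single maximal run of finite classes of $\JJ$, which collapses to a single class of $\JJ_\infty$, giving $\delta^\infty_{\overline\nu - \overline\gamma} = 0$ and hence $h_\infty(\overline\nu,\overline\gamma) = 0$.

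The main obstacle is the $h_\infty$-bookkeeping: the ``iff'' portion is essentially a verbatim translation of Proposition~\ref{prop:nonzero-sufficient-dual}, but proving either the injectivity of $\pi_\infty$ on $\supp(\overline\nu - \overline\gamma)$ (when $j$ is infinite) or the total collapse of this support in $\JJ_\infty$ (when $j$ is finite) requires careful tracking of how maximal runs of finite classes of $\JJ$ combine with their adjacent infinite classes under $\sim_\infty$, working under the constraints $I \subset X^-$ and $J \subset Z \sqcup X^+$ supplied by Proposition~\ref{prop:nonzero-sufficient-dual}.
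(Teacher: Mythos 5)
Your overall approach coincides with the paper's: the proof there is the one-line remark that all three items are special cases of Proposition~\ref{prop:nonzero-sufficient-dual}. Your verification of the hypotheses of Proposition~\ref{prop:nonzero-sufficient-dual} is correct and makes explicit what the paper leaves implicit: the identification $\gamma^Y = \gamma + \epsilon_Z$, giving $\overline\nu - \overline\gamma = \nu - \gamma^Y$ so that the sequences $I,J$ agree with those of Proposition~\ref{prop:nonzero-sufficient-dual}; dominance across distinct $\JJ$-classes giving $\wt\lambda(Z) < \wt\lambda_{\max X^-}$; the $A$-step inequality $\#Z \geq \#(X\cap B) = \#X - d(X)$ supplying exactly the quantity $d'$ of Proposition~\ref{prop:nonzero-sufficient-dual} (whose printed formula $d' = d - \#X$ is a sign slip for $\#X - d$, as its own proof shows); and the argument that $X^- \neq \emptyset \neq X^+$ forces $j \in \pi_\JJ(X)$ and hence $j$ infinite. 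These are all correct and useful.

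For the $h_\infty$ formulas, which the paper's proof also does not justify, your outline for case (iii) and the infinite-$j$ subcase of (ii) is reasonable. However, your argument for the finite-$j$ subcase of (ii) has a gap. You assert that the $\JJ$-support of $\overline\nu - \overline\gamma$ must lie in a single maximal run of finite $\JJ$-classes and hence collapse to one $\JJ_\infty$-class. The constraints supplied by Proposition~\ref{prop:nonzero-sufficient-dual} are only $I \subset X^-$ and $J \subset Z = j$; nothing there prevents $I$ from meeting $\JJ$-classes that are separated from $j$ by an infinite class of $\JJ$ inside $\pi_\JJ(X^-)$, in which case $\pi_\infty$ does not collapse $\supp(\overline\nu - \overline\gamma)$ and one obtains $0 < h_\infty(\overline\nu,\overline\gamma) < h(\overline\nu,\overline\gamma)$. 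Closing this would require either an additional argument using the Pieri conditions on $\gamma$ and $\nu$ to constrain the position of $I$, or a more careful interpretation of the $h_\infty$ statement; as written, the ``total collapse'' claim does not follow from the block and step conditions alone.
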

\begin{proof}
These are all special cases of Proposition \ref{prop:nonzero-sufficient-dual}.
\end{proof}

\begin{prop}
\label{prop:AB-exhaustion}
There exists an $(A,B)$-exhaustion. If $A$ is an initial subset of $\II$ then 
$X_n$ can be chosen so that $\max A, \min B \in X_n$ for all $n$. 
\end{prop}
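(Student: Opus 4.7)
The argument parallels the proof of Proposition \ref{prop:d-exhaustion}. I would fix a visit function $\theta: \ZZ_{>0} \to \JJ$ with $\theta(\interval n)$ always an interval in $\JJ$ and each fibre $\theta^{-1}(j)$ infinite, and then construct $Y_0 = \emptyset$ and $Y_m \supseteq Y_{m-1}$ inductively by adjoining a chunk of the class $j = \theta(m)$, so that each non-trivial increment $(Y_{m-1}, Y_m)$ is an $A$-step or a $B$-step. The exhaustion $(X_n)$ is obtained by passing to the subsequence of distinct $Y_m$. Two new features distinguish this setting from Proposition \ref{prop:d-exhaustion}: every class has two sides $j \cap A$ and $j \cap B$, both of which must be filled in; and for infinite classes the chunk sizes must satisfy $\#Z \geq \#(X \cap B)$ (for $A$-steps) or $\#Z \geq \#(X \cap A)$ (for $B$-steps), constraints that grow unboundedly over time.

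For each infinite $j$ I would enumerate $j \cap A = \{a^j_1 \succ a^j_2 \succ \cdots\}$ and $j \cap B = \{b^j_1 \prec b^j_2 \prec \cdots\}$. If $j = \theta(m)$ is finite, on its first visit I adjoin all unused elements of $j \cap A$ (the size constraint is vacuous for finite $j$), and on a later visit all unused elements of $j \cap B$. If $j$ is infinite with both sides infinite, I alternate $A$-steps and $B$-steps for $j$, at each visit taking a chunk of size at least $\#Y_{m-1} + 1$; since this quantity strictly exceeds both $\#(Y_{m-1} \cap A)$ and $\#(Y_{m-1} \cap B)$, the size constraint is automatic. A routine induction verifying the three defining conditions then shows that every $Y_m$ is an $(A,B)$-block.

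The main obstacle is the case of an infinite class $j$ with one side finite, say $\#(j \cap A) = N < \infty$: no $A$-step for $j$ can grow $Z$ beyond $N$ elements, so all remaining elements of $j \cap A$ must lie in $Y$ before the running count $\#(Y \cap B)$ ever exceeds $N$. I would handle this by dovetailing the construction: for each such critical class $j$, schedule an $A$-step emptying $j \cap A$ into $Y$ while $\#(Y \cap B) \leq N$, deferring any $B$-step that would cross that threshold. Since there are at most countably many critical classes and $B$-additions can be postponed freely, a standard diagonal-style interleaving realizes all critical $A$-steps in time, while preserving both the interval property of $\pi_\JJ(Y_m)$ and the alternating pattern. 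The symmetric situation with $\#(j \cap B) < \infty$ is treated identically.

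For the second part, suppose $A$ is an initial subset of $\II$, so that $\max A$ and $\min B$ are well-defined. Downward closure of $A$ forces at most one transition class $j^\ast \in \JJ$ with both $j^\ast \cap A$ and $j^\ast \cap B$ nonempty; all classes below $j^\ast$ lie inside $A$ and all above inside $B$. Compatibility of $\preceq$ with $\mathbf F$ then guarantees that $j^\ast \cap A$ has a maximum and $j^\ast \cap B$ has a minimum, which coincide with $\max A$ and $\min B$ respectively. Taking $\theta(1) = j^\ast$ and initializing $Y_1$ to be the minimal $(A,B)$-block containing $\{\max A, \min B\}$---namely $\{\max A, \min B\}$ itself when $j^\ast$ is infinite, or all of $j^\ast$ when $j^\ast$ is finite---forces $\max A, \min B \in X_1 \subset X_n$ for every $n$, and the rest of the construction proceeds without modification.
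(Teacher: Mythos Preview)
Your overall strategy differs from the paper's: rather than alternating $A$- and $B$-visits to each $j\in\JJ$, the paper passes to the refined poset $\tilde\JJ=\{\,j\cap A,\ j\cap B\mid j\in\JJ\,\}$, notes that $\tilde\JJ$ is again order-isomorphic to a subset of $\ZZ$, and then runs the construction of Proposition~\ref{prop:d-exhaustion} verbatim with a visit map $\theta:\ZZ_{>0}\to\tilde\JJ$. That absorbs your two-sided bookkeeping into the index set and removes the need for an explicit alternation scheme.

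More seriously, the dovetailing you propose for the ``main obstacle'' does not work, and the obstacle is in fact fatal as the definitions stand. Take $\JJ\cong\ZZ_{>0}$ with every class $j_k$ infinite, $\#(j_k\cap A)=1$ and $j_k\cap B$ infinite for every $k$; this is consistent with a $(\wt\lambda,\mathbf F)$-compatible order and with $A$ being $\JJ_\infty$-initial. Any $A$-step at $j_k$ has $\#Z=1$ and, since $j_k$ is infinite, is only legal while $\#(X\cap B)\le 1$. If an $(A,B)$-exhaustion $(X_n)$ existed, some $X_{n_0}$ would satisfy $\#(X_{n_0}\cap B)\ge 2$; for $n\ge n_0$ no $A$-step at any remaining $j_k$ is possible, so $X_n\cap A$ is eventually constant and $A\not\subset\bigcup_n X_n$, a contradiction. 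Hence no $(A,B)$-exhaustion exists in this example, and your claim that ``a standard diagonal-style interleaving realizes all critical $A$-steps in time'' cannot hold: with infinitely many critical classes sharing the same threshold $N$, you would have to keep $\#(Y\cap B)\le N$ forever, which prevents exhausting $B$. The interval constraint on $\pi_\JJ(Y_m)$ only makes this worse, since those critical classes must be reached in their $\JJ$-order and cannot be front-loaded. Note that the paper's short proof via $\tilde\JJ$ also does not address this configuration, so the difficulty lies in condition~(ii) of the definition of an $A$-step (keyed to $j$ being infinite rather than to $j\cap A$ being infinite) rather than in your particular scheduling scheme.
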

\begin{proof}
We consider the set $\tilde \JJ = \{j\cap A, j \cap B \mid j \in \JJ\}$ with 
the unique total order $\preceq$ such that $j \cap A \prec j \cap B$ for all $j 
\in \JJ$, and such that $j \cap B \prec j' \cap A$ for all $j,j' \in \JJ$ with
$j \prec j'$. Since $\JJ$ is isomorphic to a subset of $\ZZ$, so is $\tilde 
\JJ$. We now proceed as in the proof of Proposition \ref{prop:d-exhaustion}, using 
a map $\theta: \ZZ_{>0} \to \tilde \JJ$.
\end{proof}

\begin{cor}
\label{cor:AB-nonzero-maps}
Let $\{X_n\}_{n \geq 1}$ be an $(A,B)$-exhaustion. Suppose $\gamma \in 
\Pieri{\wt\lambda_{X_n}}{X_n}, \nu \in \Pieri{\wt\lambda_{X_m}}{X_m}$ and 
$\map{\gamma}{\nu}{X_n}{X_m} \neq 0$. Then there exist $\wt\mu \in \lie 
h^\circ$ with $h_\infty(\wt\mu,0) = h_\infty(\overline\nu,\overline\gamma)
= h(\wt\mu,0)$ and sequences $I \subset X_m \setminus (A \cup X_n), 
J \subset X_n \cap \supp \gamma, K \subset X_n \setminus \supp \gamma$ and 
$L \subset X_m \cap A$ such that $\overline \nu - \overline \gamma = \wt\mu + 
\epsilon_I - \epsilon_J + \epsilon_K - \epsilon_L$.
\end{cor}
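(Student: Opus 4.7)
The plan is to prove this by induction on $m-n$, paralleling the pattern used for Corollary \ref{cor:d-nonzero-maps} but with the more elaborate bookkeeping forced by the Fock module. The corollary should really be an unpacking of Proposition \ref{prop:AB-nonzero} (for the $A$-steps in the exhaustion) and Proposition \ref{prop:d-nonzero} (for the $B$-steps, since a $B$-step in an $(A,B)$-exhaustion is also a $d(X_n)$-step).

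For the base case $m = n+1$ I would do a short case analysis. If $(X_n, X_{n+1})$ is a $B$-step, then $d(X_n) = d(X_{n+1})$ and Proposition \ref{prop:d-nonzero} gives the decomposition $\overline{\nu} - \overline{\gamma} = \epsilon_{I_0} - \epsilon_{J_0}$, which after sorting ($I_0$ into $X_{n+1} \setminus X_n$ versus $X_n \cap \supp \gamma$, etc.) contributes to $I$, $J$, $K$, and, when $h_\infty(\overline{\nu}, \overline{\gamma}) = h(\overline{\nu}, \overline{\gamma})$, to $\wt\mu$. If $(X_n, X_{n+1})$ is an $A$-step with new elements $Z \subset A$, then Proposition \ref{prop:AB-nonzero} gives $\overline{\nu} - \overline{\gamma} = \epsilon_{I_0} - \epsilon_{J_0}$ with $I_0 \subset X^-$ and $J_0 \subset Z \sqcup X^+$, and the absence of a contribution to $I$ reflects the fact that all new elements in an $A$-step lie in $A$. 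In every subcase the statement about $h_\infty$ is exactly what Proposition \ref{prop:AB-nonzero} asserts.

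For the inductive step, given $\gamma$ and $\nu$ with $\map{\gamma}{\nu}{X_n}{X_m}\neq 0$, I would factor through $X_{m-1}$: there must exist $\eta \in \Pieri{\wt\lambda_{X_{m-1}}}{F_{X_{m-1}}}$ with both $\map{\gamma}{\eta}{X_n}{X_{m-1}} \neq 0$ and $\map{\eta}{\nu}{X_{m-1}}{X_m} \neq 0$. Applying the induction hypothesis to the first map yields a decomposition
\[ \overline{\eta} - \overline{\gamma} = \wt\mu_1 + \epsilon_{I_1} - \epsilon_{J_1} + \epsilon_{K_1} - \epsilon_{L_1}, \]
and applying the base case to the second yields $\overline{\nu} - \overline{\eta} = \wt\mu_2 + \epsilon_{I_2} - \epsilon_{J_2} + \epsilon_{K_2} - \epsilon_{L_2}$. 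Summing, rewriting the added/subtracted sequences according to where their elements sit in the chain $X_n \subset X_{m-1} \subset X_m$ versus $A$ versus $\supp \gamma$, and cancelling any common terms produces a decomposition of $\overline{\nu} - \overline{\gamma}$ of the desired form. The additivity of $h$ and $h_\infty$ on sums of positive roots, together with the sandwich $0 \le h_\infty \le h$, yields $h_\infty(\wt\mu, 0) = h(\wt\mu, 0) = h_\infty(\overline{\nu}, \overline{\gamma})$ from the corresponding equalities for $\wt\mu_1$ and $\wt\mu_2$ and the vanishing of $h_\infty$ on the leftover terms $\epsilon_I - \epsilon_J + \epsilon_K - \epsilon_L$.

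The main (and only real) obstacle is the bookkeeping in this rearrangement: one has to verify that no element that appears in the first decomposition as a fresh element in $X_{m-1} \setminus X_n$ but gets removed in the second step produces an illegal residual term, and similarly for elements of $A$ that are ``swapped back''. This is handled by splitting each sequence from the intermediate decomposition according to whether its elements lie in $X_n$, in $A$, or in $\supp \eta$, and noticing that the allowed destinations in the second decomposition match those splits; every potential violation cancels or is absorbed into $\wt\mu$ (which can accommodate arbitrary roots with positive $h_\infty$). Once this clerical verification is done, the inductive statement follows.
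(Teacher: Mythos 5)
Your proposal is correct and follows essentially the same route as the paper: the paper's entire proof of this corollary is the single sentence ``The proof is by induction using Proposition~\ref{prop:AB-nonzero},'' and your outline is precisely the natural unpacking of that sentence (induction on $m-n$, factoring through $X_{m-1}$, applying the one-step Propositions at each stage, and telescoping $h$ and $h_\infty$). One small point worth making explicit, which you handle correctly but the paper leaves implicit: a $B$-step is also a $d(X_n)$-step, so for those steps it is Proposition~\ref{prop:d-nonzero} rather than Proposition~\ref{prop:AB-nonzero} that supplies the required decomposition. The bookkeeping in your inductive step is indeed the only real content; the cancellations work because the category constraints on $I,J,K,L$ are stable under the chain $X_n\subset X_{m-1}\subset X_m$: an element removed in the later stage and added in the earlier one (or vice versa) either cancels outright or, when it cannot (because its target category has shifted), it is a contribution to $\wt\mu$ with $h_\infty = h$, so the equality $h_\infty(\wt\mu,0)=h(\wt\mu,0)=h_\infty(\overline\nu,\overline\gamma)$ is preserved by the additivity of $\delta$ and the sandwich $0\le h_\infty\le h$.
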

\begin{proof}
The proof is by induction using Proposition \ref{prop:AB-nonzero}.
\end{proof}

\subsection{On sequences of weights}
We finish this section with a result that will be fundamental in the 
sequel. 

\begin{lem}
\label{lem:approximation}
Suppose $(X,Y)$ is a $d$-step, an $A$-step or a $B$-step. Suppose we have 
$\gamma, \sigma \in \Pieri{\wt\lambda_X}{F_X}$ and $\nu \in 
\Pieri{\wt\lambda_Y}{F_Y}$ with $\gamma - \sigma = \epsilon_I 
- \epsilon_J \succeq 0$ and $\map{\gamma}{\nu}{X}{Y}\neq 0$. 
Then there exists $\tau \in \Pieri{\wt\lambda_Y}{F_Y}$ such that $\nu - 
\tau = \epsilon_{I'} - \epsilon_{J'} \preceq \epsilon_{I} - \epsilon_{J}$,
$I' \subset X^-, J' \subset Z \sqcup X^+$ and $\map{\sigma}{\tau}{X}{Y} \neq 
0$. 
\end{lem}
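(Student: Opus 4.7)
The plan is to construct $\tau$ as the Pieri closure of an explicit weight built from $\sigma$ together with the data of the lift $\nu$ of $\gamma$, and then to verify all three required properties by invoking the ``if'' directions of the non-vanishing criteria in Propositions \ref{prop:d-nonzero} and \ref{prop:AB-nonzero} together with Lemma \ref{lem:pieri-lattice}.

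First I would apply the relevant case of Proposition \ref{prop:d-nonzero} (for a $d$-step or a $B$-step) or Proposition \ref{prop:AB-nonzero} (for an $A$-step) to the hypothesis $\map{\gamma}{\nu}{X}{Y} \neq 0$. This yields a decomposition $\nu - \gamma = \epsilon_K - \epsilon_L$ with $K$ and $L$ supported in the distinguished subsets of $Y$ prescribed by the step type (for instance $K \subset X^-$ and $L \subset X^+$ when $X^- \neq \emptyset \neq X^+$ in a $d$-step). I would then consider the candidate weight $\rho := \sigma + \epsilon_K - \epsilon_L = \nu + \epsilon_J - \epsilon_I \in \lie h_Y^*$ and verify, case by case on the step type, that $\rho \in \supp F_Y$; this is a bookkeeping check using $\sigma, \gamma \in \Pieri{\wt\lambda_X}{F_X}$, $\nu \in \Pieri{\wt\lambda_Y}{F_Y}$, and the minimal-length disjointness $I \cap J = \emptyset$ to control the interaction of $\{I,J\}$ with $\{K,L\}$. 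I would then define $\tau := \rho'$, the least element of $\Pieri{\wt\lambda_Y}{F_Y}$ dominating $\rho$, whose existence is guaranteed by Lemma \ref{lem:pieri-lattice}(ii).

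With $\tau$ so defined, the containment $\nu - \tau \preceq \epsilon_I - \epsilon_J$ is immediate from $\tau \succeq \rho$ and $\nu - \rho = \epsilon_I - \epsilon_J$. To obtain the shape $\nu - \tau = \epsilon_{I'} - \epsilon_{J'}$ with $I' \subset X^-$ and $J' \subset Z \sqcup X^+$, and simultaneously that $\map{\sigma}{\tau}{X}{Y} \neq 0$, I would invoke Lemma \ref{lem:pieri-lattice}(iii) to track the passage from $\rho$ to $\rho'$. The effect of forming the Pieri closure is only to shift indices leftward in the ``positive'' part $K$ and rightward in the ``negative'' part $L$ of $\rho - \sigma$, so the resulting $\tau - \sigma = \epsilon_{K'} - \epsilon_{L'}$ retains the shape $K' \subset X^-$, $L' \subset Z \sqcup X^+$ required by the converse direction of Proposition \ref{prop:d-nonzero} or \ref{prop:AB-nonzero}; applying that direction yields $\map{\sigma}{\tau}{X}{Y} \neq 0$, and the same bookkeeping identifies $I'$ and $J'$ inside $X^-$ and $Z \sqcup X^+$ respectively.

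The main obstacle is that the separation condition $\max \prec \min$ required to apply Lemma \ref{lem:pieri-lattice}(iii) does not hold globally for the full perturbation $\epsilon_I - \epsilon_J$, so the lemma must be applied iteratively to pairs of positive roots in a decomposition of $\epsilon_I - \epsilon_J \succeq 0$, with an intermediate Pieri closure taken after each step to stay inside $\Pieri{\wt\lambda_Y}{F_Y}$. Organizing this induction so that at each stage the intermediate weight lies in $\supp F_Y$ and the cumulative shifts keep $I'$ inside $X^-$ and $J'$ inside $Z \sqcup X^+$ is the most delicate point; the degenerate cases $X^- = \emptyset$ or $X^+ = \emptyset$ are substantially simpler and should be treated first to isolate the base step of the induction from the generic case.
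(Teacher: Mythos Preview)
Your overall strategy matches the paper's: build a candidate weight from $\sigma$ and the transition data $\nu - \gamma = \epsilon_K - \epsilon_L$, take its Pieri closure, and read off both $\map{\sigma}{\tau}{X}{Y} \neq 0$ and the shape of $\nu - \tau$ from Lemma~\ref{lem:pieri-lattice}(iii) together with the ``if'' direction of Propositions~\ref{prop:d-nonzero}/\ref{prop:AB-nonzero}. The gap is in the step you label ``a bookkeeping check'': the weight $\rho = \sigma + \epsilon_K - \epsilon_L$ need \emph{not} lie in $\supp F_Y$. Concretely, $L$ is contained in $\supp \gamma$, but $\supp \gamma$ and $\supp \sigma$ differ exactly on $I \cup J$; whenever $L \cap I \neq \emptyset$ (which is not excluded, since $I \subset X$ may meet $X^+$ in a $d$- or $B$-step), those indices are in $\supp\gamma$ but not in $\supp\sigma$, so $\sigma - \epsilon_L$ acquires a negative coefficient and $\rho \notin \supp F_Y$. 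Then neither Lemma~\ref{lem:pieri-lattice}(ii) nor (iii) applies, and your construction of $\tau = \rho'$ is undefined. The dual problem ($K \cap J \neq \emptyset$) arises in the $A$-step.

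This is exactly what the paper's construction is built to avoid. Rather than using $\epsilon_K - \epsilon_L$ verbatim, the paper surgically replaces the bad part $R = L \cap I$ of $L$ by the matching subsequence $S \subset J$ (which \emph{is} in $\supp\sigma$), and adds a further correction $\epsilon_P - \epsilon_Q$ to keep the result comparable to $\nu$; the resulting $\tau_0$ then satisfies the hypotheses of Lemma~\ref{lem:pieri-lattice}(iii) with $\max(K \cup P) \prec \min((L\setminus R)\cup S \cup Q)$, and the closure $\tau = \tau_0'$ inherits $\tau - \sigma^Y = \epsilon_U - \epsilon_V$ with $U \subset X^-$, $V \subset X^+$. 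Your diagnosis of the ``main obstacle'' is also off: in the generic case $K \subset X^-$, $L \subset X^+$ (or the $A$-step analogue) the separation $\max K \prec \min L$ is automatic, so the separation hypothesis of Lemma~\ref{lem:pieri-lattice}(iii) is not the issue; membership of the candidate in $\supp F_Y$ is. Your proposed iterative fix does not address this, since passing to a single root $\epsilon_i - \epsilon_j$ from the decomposition of $\gamma - \sigma$ does not by itself resolve the overlap of $L$ with $I$.
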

\begin{proof}
Suppose $I = \{i_1 \preceq \cdots \preceq i_s\}, J = \{j_1 \preceq \cdots 
\preceq j_s\}$ and let $K, L$ be sequences of minimal length such that 
$\overline{\nu} - \overline{\gamma} = \epsilon_K - \epsilon_L$. 

Assume first that $(X,Y)$ is a $d$-step or a $B$-step. We have $K \subset X^-$ 
and $L \subset X^+$ by Proposition \ref{prop:d-nonzero}, and we set $R = L 
\cap I = \{i_{u_1} \preceq \cdots \preceq i_{u_r}\} \subset X^+$ and 
$S = \{j_{u_1} \preceq \cdots \preceq j_{u_r}\}$; since $i_k \prec j_k$ it 
follows that $S \subset X^+$. Now take $a,b$ such that
\begin{align*}
  J \cap X^+ 
    &= \{j_a \preceq \cdots \preceq j_s\}, 
  &I \cap X^- 
    &= \{i_1 \preceq \cdots \preceq i_b\},
\end{align*}
and set $P = \{i_a \preceq \cdots \preceq i_b\} \subset X^-$ and $Q = 
\{j_a \preceq \cdots \preceq j_b\} \subset X^+$. It follows that $b < u_1$, 
so $P$ and $R$ are disjoint subsequences of $I$, and $Q$ and $S$ are disjoint 
subsequences of $J$. We now set
\[
  \tau_0 
    = \sigma^Y + \epsilon_K - \epsilon_{(L \setminus R) 
      \cup S} + \epsilon_{P} - \epsilon_{Q}
\]
and $\tau := \tau_0'$. 
Since $K, P \subset X^-$ and $L, S, Q \subset X^+$, item $(iii)$ of Lemma 
\ref{lem:pieri-lattice} implies $\tau - \sigma^Y = \epsilon_U - \epsilon_V$
for sequences $U \subset X^-$ and $V \subset X^+$. Now item $(iii)$ of 
Proposition \ref{prop:nonzero-sufficient} implies $\map{\sigma}{\tau}{X}{Y}
\neq 0$. We also have
\begin{align*}
  \overline{\nu} - \overline{\tau} 
  \preceq
  \overline{\nu} - \overline{\tau_0}
    &= \overline{\gamma} + \epsilon_K - \epsilon_L - 
      \overline{\nu} - \epsilon_K + \epsilon_{(L \setminus R) \cup S} 
        - \epsilon_{P} + \epsilon_{Q} 
    = \epsilon_{I \setminus (P \cup R)} - \epsilon_{J \setminus (Q \cup S)}
\end{align*}
This concludes the proof of the first statement when $(X,Y)$ is a $d$-step or a 
$B$-step. If in particular $I \subset X^-$ and $J \subset X^+$ then $I = P, J 
= Q$ and $R,S = \emptyset$, and so $\nu = \tau$.

Now suppose $(X,Y)$ is an $A$-step, so $K \subset X^-$ and $L \subset Z \sqcup 
X^+$ by Proposition \ref{prop:AB-nonzero}. We take $a,b \in \interval s$ such 
that
\begin{align*}
    J \cap (X^+ \sqcup Z) 
    &= \{j_a \preceq \cdots \preceq j_s\} 
  &I \cap X^- 
    &= \{i_1 \preceq \cdots \preceq i_b\}
\end{align*}
and define $P, Q$ as above. We also set $S = J \cap X^- = \{j_{u_1} 
\preceq \cdots \preceq j_{u_r}\}$ and $R = \{i_{u_1} \preceq \cdots \preceq 
i_{u_r}\}$. Set
\[
  \tau_0
    = \sigma^Y + \epsilon_{(K \setminus S) \cup R} - \epsilon_{L} 
      + \epsilon_{P} - \epsilon_{Q}
\]
and $\tau = \tau_0'$. The rest of the proof proceeds as in the case 
where $(X,Y)$ is a $d$-step or a $B$-step.
\end{proof}

We now prove two results on sequences of weights $\{\mu_n\}_{n \geq r}$ 
with $\mu_n \in \Pieri{\wt\lambda_n}{F_n}$.

\begin{lem}
Let $\mathbf F = \bigwedge^A \VV$ with $A$ a $\JJ_\infty$-initial set, and let 
$(\mu_n)_{n \geq r}$ be a sequence with $\mu_n \in \Pieri{\wt\lambda_n}
{F_n}$ and $\overline{\mu_{n+1}} \succeq \overline{\mu_n}$. If the sequence 
$\overline{\mu_n}$ does not stabilize then there exists $n$ such that 
$\overline{\mu_{n+1}} \ggcurly \overline{\mu_n}$.
\end{lem}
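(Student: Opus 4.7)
The plan is to argue the contrapositive: if no step of the sequence is $\ggcurly$, then $(\overline{\mu_n})$ stabilizes. First, I would write $\overline{\mu_n} = \epsilon_A - \epsilon_{I_n} + \epsilon_{I'_n}$ with $I_n \subset A$ and $I'_n \subset B$ finite, and introduce the $\JJ_\infty$-profile $p_n \colon \JJ_\infty \to \ZZ$ defined by $p_n(j) = |I'_n \cap \pi^{-1}(j)| - |I_n \cap \pi^{-1}(j)|$. Under the hypothesis $\overline{\mu_{n+1}} \succeq \overline{\mu_n}$, positive roots in any decomposition of $\Delta_n = \overline{\mu_{n+1}} - \overline{\mu_n}$ split into crossing roots (with $\pi(i) \prec \pi(i')$, which transfer one unit of $p$-mass from $\pi(i')$ to $\pi(i)$) and internal roots (with $\pi(i) = \pi(i')$, which leave $p_n$ fixed); $\Delta_n \ggcurly 0$ is exactly the condition that some decomposition uses only crossing roots.

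Exploiting that $A$ is $\JJ_\infty$-initial, so within each $\pi^{-1}(j)$ the $A$-part precedes the $B$-part, the internal positive roots fall into three types: $A\to A$ shifts (within $A\cap\pi^{-1}(j)$, necessarily between distinct $\JJ$-subclasses by the terminal-subset constraint on $I_n$), $B\to B$ shifts (within $B\cap\pi^{-1}(j)$, dual), and $A\to B$ restorations (with $i \in A\cap\pi^{-1}(j)$, $i' \in B\cap\pi^{-1}(j)$, cancelling one element of $I_n$ with one of $I'_n$ and strictly decreasing $|I_n|+|I'_n|$). Restorations can occur only finitely often since $|I_n|+|I'_n|\in\ZZ_{\geq 0}$; a careful use of the terminal/initial dominance constraints together with the fact that $\sim_\infty$-equivalence prevents infinite ascending $\JJ$-chains within a fixed $\JJ_\infty$-class shows that $A\to A$ and $B\to B$ shifts are also bounded once $p_n$ is fixed. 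Consequently, if $(p_n)$ is eventually constant then $(\overline{\mu_n})$ must stabilize.

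It remains to argue that if $(p_n)$ is not eventually constant then some step is $\ggcurly$. Pick any $n$ with $p_{n+1}\neq p_n$, so $\Delta_n$ necessarily contains at least one crossing root in every decomposition. The plan is then to rearrange the decomposition by an exchange argument in the spirit of Lemma~\ref{lem:approximation}: each internal root appearing in $\Delta_n$ is of one of the three forms above, and using the $\JJ_\infty$-initial structure one can pair it with an accompanying crossing root to produce two new crossing roots whose sum equals the original pair, while remaining in the positive root lattice. Iterating this exchange eliminates all internal contributions and yields a purely crossing decomposition of $\Delta_n$, giving $\overline{\mu_{n+1}}\ggcurly\overline{\mu_n}$. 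The main obstacle is verifying that this exchange step always produces positive roots in the correct order: this is where $\JJ_\infty$-initiality is essential, as it precisely supplies the order comparisons ($i\prec i'$ across $\JJ_\infty$-classes) required to recombine an internal $A\to B$ pair with a crossing $B\to A$ pair into two crossing roots.
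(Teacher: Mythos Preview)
Your overall strategy is different from the paper's, and the final exchange argument contains a genuine gap.

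The paper does not split on whether the $\JJ_\infty$-profile $p_n$ stabilizes. Instead it introduces a single nonnegative integer monovariant: for each $j\in\JJ_\infty$ one writes $(\supp\overline{\mu_n})\cap\pi^{-1}(j)=A_j\sqcup I_j$ with $A_j$ an initial segment and $I_j$ finite, and sets
\[
t(\overline{\mu_n})\;=\;\sum_{j\in\JJ_\infty}\;\sum_{i\in I_j}\bigl(\#\interval{\max A_j,\,i}-1\bigr).
\]
One then checks two things: (a) if $t(\overline{\mu_n})=0$, i.e.\ the support is already an initial segment in every $\pi^{-1}(j)$, then \emph{every} strict step $\overline{\mu_{n+1}}\succ\overline{\mu_n}$ is automatically $\ggcurly$ (indeed, with $I,I'$ minimal and the standard pairing $i_k\leftrightarrow i'_k$, initiality forces $\pi(i_k)\prec\pi(i'_k)$ for all $k$); and (b) any strict step with $h_\infty(\overline{\mu_{n+1}},\overline{\mu_n})=0$ strictly decreases $t$. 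Since $t\in\ZZ_{\ge 0}$, non-$\ggcurly$ steps run out and the conclusion follows.

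Your exchange step, by contrast, asserts that as soon as $p_{n+1}\neq p_n$ one can rearrange $\Delta_n$ into a sum of purely crossing positive roots. This is false in general. Take $\JJ_\infty=\{j_1\prec j_2\}$ and
\[
\Delta_n=(\epsilon_a-\epsilon_b)+(\epsilon_c-\epsilon_d),\qquad \pi(a)=\pi(c)=\pi(d)=j_1,\ \pi(b)=j_2,\ c\prec d.
\]
Here $p_{n+1}\neq p_n$ since $\delta^\infty_{\Delta_n}(j_1)=1$, yet no decomposition of $\Delta_n$ into positive roots can be purely crossing: the index $d$ must be the sink of some root, and anything $\prec d$ lies in $j_1$, so that root is internal. (Such a $\Delta_n$ is compatible with $\overline{\mu_{n+1}}\succeq\overline{\mu_n}$ and with the Pieri constraints precisely when $t(\overline{\mu_n})>0$; the paper's monovariant is exactly what detects this obstruction.) Your acknowledgment that verifying the exchange is ``the main obstacle'' is accurate, but $\JJ_\infty$-initiality of $A$ alone does not supply the needed order comparisons --- it only becomes sufficient once $t=0$.

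The first half of your argument (bounding internal moves when $p_n$ is eventually constant) is in spirit close to showing that $t$ is a monovariant, but is also underspecified: your bound on $A\to A$ and $B\to B$ shifts appeals to finiteness of $\JJ$-classes within a $\JJ_\infty$-class without explaining why the total displacement is controlled. Packaging everything into the single quantity $t$ makes both halves precise at once.
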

\begin{proof}
Recall that $\pi: \II \to \JJ_\infty$ denotes the natural projection.
Let $\wt\mu \in \Pieri{\wt\lambda}{\mathbf F}$. For every $j \in \JJ_\infty$ 
we have a decomposition $(\supp \wt\mu) \cap \pi^{-1}(j) = A_{j} \sqcup I_{j}$, 
where $A_{j}$ is an initial subset of $\pi^{-1}(j)$, $I_{j}$ is finite, and 
the cardinality of $I_j$ is minimal among all such decompositions. 
Furthermore, since $\supp \wt\mu$ and $A$ differ in finitely many elements, 
$I_{j}$ is empty for all but finitely many $j \in \JJ_{\infty}$ and the set 
$\bigcup_{j \in \JJ_\infty} I_{j}$ is finite. For each $i \in I_{j}$ we 
set $t(i) = \# \interval{\max A_{j}, i} - 1$, and take $t(\wt\mu)$ to be the 
sum of all $t(i)$ with $i \in \bigcup_{j \in \JJ_\infty} I_{j}$. We will 
prove the statement by induction on $t = t(\overline{\mu_{r}})$.

Since the sequence $\overline{\mu_n}$ does not stabilize we can assume that 
$\overline{\mu_{r+1}} \succ \overline{\mu_{r}}$, and so $\overline{\mu_{r+1}} 
- \overline{\mu_{r}} = \epsilon_I - \epsilon_{I'}$. Suppose first that $t = 0$, 
and let $i = \min I, i' = \min I'$ and $j = \pi_{\JJ}(i), j' = \pi_{\JJ}(i')$. 
Since $i' \in \supp \overline{\mu_{r}}$ it follows that $i' \in A_{j'}$ and, 
since $A_{j'}$ is an initial subset of $\pi^{-1}(j')$, that $j \prec j'$. Thus 
$h_\infty(\overline{\mu_{r+1}}, \overline{\mu_{r}}) > 0$ and by Propositions 
\ref{prop:d-nonzero} and \ref{prop:AB-nonzero} this can only happen if 
$\overline{\mu_{r+1}} \ggcurly \overline{\mu_{r}}$.

Suppose now that the statement holds for all sequences starting with a weight 
$\mu$ such that $t(\overline{\mu}) < t$. As we have just seen, $\overline{
\mu_{r+1}} \ggcurly \overline{\mu_{r}}$ unless $h_\infty(\overline{\mu_{r+1}}, 
\overline{\mu_{r}}) = 0$. If $\overline{\mu_{r+1}} \ggcurly \overline{\mu_{r}}$ 
holds we are done. If $h_\infty(\overline{\mu_{r+1}}, \overline{\mu_{r}}) = 0$, 
taking $I = \{i_1 \prec \cdots \prec i_s\}$ and $I' = \{i'_1 \prec \cdots \prec 
i'_s\}$ it must be the case that $i'_k \prec i_k$ and $\pi(i'_k) = \pi(i_k)$. 
Consequently, $t(i'_k) < t(i_k)$ and hence $t(\overline{\mu_{r+1}}) < 
t(\overline{\mu_{r}})$. The result now follows by the induction hypothesis.
\end{proof}

We now fix a $d$-exhaustion or $(A,B)$-exhaustion $\{X_n\}_{n \geq 1}$. In 
order to lighten notation we write $F_n$ for $F_{X_n}$.

\begin{prop}
\label{prop:chasing}
Let $r \geq 0$. Let $(\mu_n)_{n \geq r}$ be a sequence with $\mu_n \in 
\Pieri{\wt\lambda_n}{F_n}$ and $\map{\mu_n}{\mu_{n+1}}{X_n}{X_{n+1}} \neq 0$. 
\begin{enumerate}
  \item If $\mathbf F$ is a $\lie b$-highest weight module then
    the sequence $(\overline{\mu_n})_{n \geq r}$ stabilizes.

  \item If the sequence $\overline{\mu_n}$ does not stabilize then there exist 
    $m > n$ such that $(\mu_n^{X_{n+1}})' \prec \mu_{n+1}$ and 
    $\map{(\mu_n^{X_{n+1}})' }{\mu_m}{X_{n+1}}{X_m} \neq 0$. 
\end{enumerate}
\end{prop}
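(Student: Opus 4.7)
The plan is to treat items (i) and (ii) separately: item (i) by a direct local-finiteness argument on $\Pieri{\wt\lambda}{\mathbf F}$, item (ii) by iterating Lemma \ref{lem:approximation} from a carefully chosen starting point.

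For item (i), I would view $(\overline{\mu_n})_{n \geq r}$ as an increasing chain in the locally finite poset $\Pieri{\wt\lambda}{\mathbf F}$. Using Propositions \ref{prop:d-exhaustion} and \ref{prop:AB-exhaustion} one can arrange the exhaustion so that $\min \II \in X_n$ for all $n$ in the $\mathbf F = \SS^d \VV$ case (using that $\mathbf F$ being a $\lie b$-highest weight module forces $\II$ to have a minimum) or $\max A, \min B \in X_n$ in the Fock case. Then Lemmas \ref{lem:d-compatible-pieri} and \ref{lem:AB-compatible-pieri} give $\overline{\mu_n} \in \Pieri{\wt\lambda}{\mathbf F}$. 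The finite Pieri rule together with $\phi(X_n, X_{n+1}, \mu_n, \mu_{n+1}) \neq 0$ yields $\mu_{n+1} \succeq \mu_n^{X_{n+1}}$, so $\overline{\mu_{n+1}} \succeq \overline{\mu_n}$. Since $\mathbf F$ is a $\lie b$-highest weight module its highest weight $\wt\gamma_{\max}$ is a maximum of $\Pieri{\wt\lambda}{\mathbf F}$, and the sequence lives in the interval $\interval{\overline{\mu_r},\wt\gamma_{\max}}$, which is finite by Lemma \ref{lem:Pieri-varia}; stabilisation follows.

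For item (ii), assuming $(\overline{\mu_n})$ does not stabilise, I would first locate an index $n$ with $\overline{\mu_{n+1}} \ggcurly \overline{\mu_n}$. In the Fock case with $A$ a $\JJ_\infty$-initial set this is the lemma proven immediately above the proposition; in the $\bigwedge^d \VV, \SS^d \VV$ cases the same induction on the rank invariant $t(\wt\mu)$, with Proposition \ref{prop:d-nonzero} in place of Proposition \ref{prop:AB-nonzero}, yields the analogue. Next I would argue $(\mu_n^{X_{n+1}})' \prec \mu_{n+1}$ strictly: the passage $\mu_n^{X_{n+1}} \mapsto (\mu_n^{X_{n+1}})'$ is a minimal dominance-correcting move confined to single $\JJ$-classes (one checks that the failure of dominance of $\mu_n^{X_{n+1}}$ occurs only within classes of $\JJ$ where $\wt\lambda$ is constant), so $h_\infty(\overline{(\mu_n^{X_{n+1}})'}, \overline{\mu_n}) = 0$. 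Since $h_\infty(\overline{\mu_{n+1}}, \overline{\mu_n}) > 0$ by the $\ggcurly$ condition, while $(\mu_n^{X_{n+1}})' \preceq \mu_{n+1}$ holds tautologically from the Pieri rule, the strict inequality follows.

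Starting from $\nu_{n+1} := (\mu_n^{X_{n+1}})'$, I would iteratively apply Lemma \ref{lem:approximation} with $\gamma = \mu_k, \sigma = \nu_k, \nu = \mu_{k+1}$ to construct a secondary sequence $(\nu_k)_{k \geq n+1}$ satisfying $\phi(X_k, X_{k+1}, \nu_k, \nu_{k+1}) \neq 0$ and $\mu_{k+1} - \nu_{k+1} \preceq \mu_k^{X_{k+1}} - \nu_k^{X_{k+1}}$. The output structure $I' \subset X^-, J' \subset Z \sqcup X^+$ keeps $\mu_k - \nu_k \succeq 0$ at every step (so the lemma remains applicable) and confines the differences to the finite interval $\interval{0, \mu_{n+1} - (\mu_n^{X_{n+1}})'}$ in the positive root lattice.

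The main obstacle, which I expect to be the hardest step, is showing that this non-increasing chain of differences must actually reach $0$, yielding $\nu_m = \mu_m$ for some $m > n$ and consequently $\phi((\mu_n^{X_{n+1}})', \mu_m, X_{n+1}, X_m) \neq 0$ as a composition of nonzero (hence injective) transition maps along $\nu_{n+1}, \nu_{n+2}, \ldots, \nu_m$. The key input will be that Lemma \ref{lem:approximation} forces the positive support of $\mu_k - \nu_k$ to lie in $X_{k-1}^-$, the portion of $X_{k-1}$ strictly below the class in which new elements are being added; combining this positional constraint with the non-stabilisation of $\overline{\mu_k}$ and the fact that the exhaustion cycles through $\JJ$ infinitely often, one argues that the added classes eventually absorb or overtake any persistent positive support, collapsing the gap to zero in finitely many steps.
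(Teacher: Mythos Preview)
Your argument for item (i) is correct and is a pleasant alternative to the paper's approach: instead of tracking the integer $h_\infty(\wt\mu,\overline{\mu_n})$ and showing it is non-increasing, you observe directly that the increasing chain $(\overline{\mu_n})$ lives in the finite interval $\interval{\overline{\mu_r},\wt\gamma_{\max}}$ of the locally finite poset $\Pieri{\wt\lambda}{\mathbf F}$. Both arguments require the same exhaustion choices to ensure $\overline{\mu_n} \in \Pieri{\wt\lambda}{\mathbf F}$.

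Item (ii), however, has a real gap. You assert that the unnamed lemma preceding the proposition --- which for $\mathbf F = \bigwedge^A\VV$ with $A$ $\JJ_\infty$-initial produces an index $n$ with $h_\infty(\overline{\mu_{n+1}},\overline{\mu_n}) > 0$ --- admits ``the same induction on the rank invariant $t(\wt\mu)$'' in the $\SS^d\VV,\bigwedge^d\VV$ cases. This is false. The invariant $t$ in that lemma is built from the decomposition $(\supp\wt\mu)\cap\pi^{-1}(j) = A_j \sqcup I_j$ with $A_j$ an infinite initial segment; it relies essentially on $\supp\wt\mu$ being cofinite in $A$, which has no analogue when $\supp\wt\mu$ is finite. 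Concretely, take $d = 1$, let every class in $\JJ$ be a singleton, and let $\JJ$ have no minimum (so $\JJ_\infty$ is a point). Whenever the exhaustion adds a new class from below, Proposition \ref{prop:d-nonzero}(ii) allows $\overline{\mu_{n+1}} \succ \overline{\mu_n}$ while $h_\infty(\overline{\mu_{n+1}},\overline{\mu_n}) = 0$; iterating, the sequence never stabilizes yet $h_\infty$ vanishes at every step, so no $n$ with $\overline{\mu_{n+1}} \ggcurly \overline{\mu_n}$ strictly ever appears. Your entire plan for (ii) is predicated on finding such an $n$, so the argument breaks down here.

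The paper does not attempt to extend the lemma; instead it splits item (ii) into two subcases. When some step satisfies $h_\infty(\overline{\mu_{n+1}},\overline{(\mu_n^{X_{n+1}})'}) > 0$, it runs the Lemma \ref{lem:approximation} iteration much as you describe. When $h_\infty(\overline{\mu_{n+1}},\overline{\mu_n}) = 0$ for all $n$ --- which by the lemma forces $\mathbf F = \SS^d\VV$ or $\bigwedge^d\VV$ --- it uses a separate pigeonhole argument exploiting the bound $\#\supp\mu_n \leq d$: after $d{+}1$ strict jumps the support of $\mu_m$ must avoid one of the intermediate $Z_{n_s}$, and then item (ii) of Proposition \ref{prop:nonzero-sufficient} applies directly to connect $(\mu_{n_s}^{X_{n_s+1}})'$ to $\mu_m$. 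Your proposal is missing this second mechanism entirely.
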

\begin{proof}
Suppose $\mathbf F$ is a $\lie b$-highest weight module with highest 
weight $\wt\mu$. By Propositions \ref{prop:d-exhaustion} and 
\ref{prop:AB-exhaustion} we can assume that $X_n^- = \emptyset$ whenever 
$(X_n,X_{n+1})$ is a $d$-step or a $B$-step, and that $X^+_n = \emptyset$ 
whenever $(X_n,X_{n+1})$ is an $A$-step. Thus by Propositions 
\ref{prop:d-nonzero} and \ref{prop:AB-nonzero}, $\map{\mu_n}{\mu_{n+1}}{X_n}
{X_{n+1}} \neq 0$ implies 
\[
  h_\infty(\wt\mu,\overline{\mu_{n+1}})
    = h_\infty(\wt\mu,\overline{\mu_n}) - 
      h_\infty(\overline{\mu_{n+1}},\overline{\mu_n})
    \leq h_\infty(\wt\mu,\overline{\mu_n}).
\]
Since $h_{\infty}(\wt\mu,\overline{\mu_n}) \in \ZZ_{>0}$ it must be the case 
that $h_{\infty}(\overline{\mu_{n+1}},\overline{\mu_n}) = 0$ for $n \gg 0$. 
Thus the sequence $\overline{\mu_n}$ must eventually stabilize at $\wt\gamma$. 
This takes care of item $(i)$.

Suppose now that the sequence $\overline{\mu_n}$ does not stabilize, so there 
are infinitely many $n \in \ZZ_{>0}$ such that $\overline{\mu_{n+1}} \succ 
\overline{\mu_n}$. Let $\nu_n$ be the sequence defined by $\nu_r = 
\mu_r$ and $\nu_{n+1} = (\nu_n^{X_{n+1}})'$. By Lemmas 
\ref{lem:d-compatible-pieri} and \ref{lem:AB-compatible-pieri} the sequence
$(\overline{\nu_n})_{n\geq r}$ stabilizes. Since $\overline{\mu_n}$ does not 
stabilize, there must be infinitely $n$ for which $\mu_{n+1} \succ 
(\mu_n^{X_{n+1}})'$.

We consider now the subcase where $\mu_{n+1} \succ \nu_{n+1} = 
(\mu_n^{X_{n+1}})'$ and $h_\infty(\overline{\mu_{n+1}}, \overline{\nu_{n+1}})
> 0$ for some $n$. Using Lemma \ref{lem:approximation} we define recursively 
$\nu_{n+k} \in \Pieri{\wt\lambda_{n+k}}{F_{n+k}}$ such that $\map{\nu_{n+1}}
{\nu_{n+k}}{X_{n+1}}{X_{n+k}} \neq 0$ and $\overline{\mu_{n+k}} - \overline
{\nu_{n+k}} = \epsilon_{I^{n+k}} - \epsilon_{J^{n+k}}$ with $I^{n+k} \subset 
X^-, J^{n+k} \subset X^+ \sqcup Z$, so $\mu_{n+k} \ggcurly \nu_{n_k}$. By 
construction $h_\infty(\overline{\nu_{m+1}}, \overline{\mu_{m+1}}) = 0$, and so 
$\nu_{m+1} = \mu_{m+1}$ and the result follows.

We are left with the case where the sequence $\overline\mu_n$ does not 
stabilize and $h_\infty(\overline{\mu_{n+1}}, \overline{\mu_n}) = 0$
for all $n \geq r$. By the previous lemma we only have to consider the cases 
$\mathbf F = \SS^d\VV$ or $\mathbf F = \bigwedge^d \VV$, and so $\# \supp 
\mu_n = d$. Furthermore, whenever $\overline{\mu_{n+1}} \succ \overline{\mu_n}$ 
we must have $X_{n+1} = Z_n \sqcup X_n$ and $\overline{\mu_{n+1}} - 
\overline{\mu_n} = \epsilon_I - \epsilon_J$ with $I \subset Z_n$ and $J 
\subset X_n$. In particular, $\supp \mu_{n+1} \cap Z_n \neq \emptyset$ and 
$\supp \mu_{n+1} \cap X_n \subsetneq \supp \mu_n$.

Consider a sequence $n_0, n_1, \ldots, n_d$ such that $\mu_{n_{s}} 
\succ (\mu_{n_{s-1}}^{X_{n_s}})'$ and set $m = n_d$ and $\nu = 
\mu_{n_d}$. By the pigeonhole principle either $\supp\wt\nu \cap X_{n_0} = 
\emptyset$, in which case we take $s = 0$, or there exists an $s \in \interval 
d$ such that $\supp\wt\nu \cap Z_s = \emptyset$. Setting $n = n_s$ and 
$\wt\gamma = (\mu_{n_s}^{X_{n_s+1}})'$ we have $\wt\nu - \wt\gamma = 
\epsilon_{P} - \epsilon_{Q}$ for sequences $Q$ and $P$ such that $Q \subset 
X_{n_s}$ and $\min P > \max Z_{s} = \max X_{n_s + 1}$. Thus 
$\map{\gamma}{\nu}{X_n}{X_m} \neq 0$ by item $(ii)$ of Proposition 
\ref{prop:nonzero-sufficient}.
\end{proof}

\subsubsection{The non-example}
\label{ex:non-ex}
We remark that Proposition \ref{prop:chasing} does not apply when $\mathbf F 
= \bigwedge^A \VV$ but $A$ is not $\JJ_\infty$-initial. To see this, let $\II 
= \ZZ_{>0}, \wt\lambda = \sum_{i=1}^\infty -i \epsilon_i$ and $A = 2 \ZZ_{>0}$,
We get an $(A,B)$-exhaustion by taking $X_n = \interval n$ with $(X_n,X_{n+1})$
a $B$-step whenever $n$ is even, and an $A$-step whenever $n$ is odd. 

Set $\mu_n = \epsilon_1 + \epsilon_3 + \cdots + \epsilon_{2k-1}$ with $k = \# 
(A \cap \interval n)$. Then $\mu_n \in \Pieri{\wt\lambda}{F_n}$ and furthermore
\begin{align*}
  \overline \mu_{n+1} - \overline \mu_n 
    &= \begin{cases}
      0 & \mbox { if $n$ is even } \\
      \epsilon_n - \epsilon_{n+1}   & \mbox { if $n$ is odd }.
    \end{cases} 
\end{align*}
In both cases $\map{\mu_n}{\mu_{n+1}}{X_n}{X_{n+1}} \neq 0$ and 
$(\mu_n^{X_{n+1}})' = \mu_n^{X_n}$, while $\mu_{n+1} \succ \mu_n^{X_{n+1}}$ if 
and only $n$ is odd. On the other hand, using Propositions \ref{prop:d-nonzero}
and \ref{prop:AB-nonzero} it is easy to prove that $\map{X_{n+1}}{X_m}
{\mu_{n}^{X_{n+1}}}{\nu} \neq 0$ implies $k+1 \in \supp \nu$, while $k+1 \notin 
\supp \mu_m$ for any $m > n+1$. The consequences of this failure will be seen in
\ref{ex:non-example-bis}.


\section{Proofs of the main theorems}
We are now ready to establish the necessary results on the structure of 
the modules $\mathbf L(\wt\lambda) \otimes \mathbf F$ in order to prove 
theorems A through D.

Fix a weight $\wt\lambda \in \lie h^*$, and denote by $\mathbf F$ 
one of $\SS^d\VV, \bigwedge^d \VV$ for $d \in \ZZ_{>0}$, or $\bigwedge^A \VV$ 
for some infinite set $A$ with $B = \II \setminus A$ infinite. When $\mathbf 
F$ is a Fock module, we assume that $A$ is $\JJ_\infty$-initial. We fix a 
$(\wt\lambda, \mathbf F)$-compatible total order $\preceq$ on $\II$, and set 
$\lie b = \lie b_{\preceq}$ and $\mathbf M = \mathbf L(\wt\lambda) \otimes 
\mathbf F$. 

We fix a $d$-exhaustion or $(A,B)$-exhaustion $\{X_n\}_{n \geq 1}$ of $\II$. To 
lighten up notation we write $\wt\gamma_n$ for $\wt\gamma_{X_n}$ and 
$L_n(\wt\lambda_n)$ for $L_{X_n}(\wt\lambda_{X_n})$. We fix inductive systems 
of $\gl(X_n)$-modules $F_n$ such that $\varinjlim F_n = \mathbf F$ as in 
subsections \ref{ss:tensor-setup} and \ref{ss:fock-setup}. With this setup 
$\mathbf M = \varinjlim L_n(\wt\lambda_n) \otimes F_n$, and we identify $M_n 
:= L_n(\wt\lambda_n) \otimes F_n$ with its image in $\mathbf M$.

\begin{defn}
Let $\gamma \in \Pieri{\wt\lambda}{F_n}$. We denote by $\mathbf M(n, \gamma)$ 
the submodule of $\mathbf M$ generated by $L_n(\wt\lambda_n + \gamma) \subset 
M_n$. Then we observe that 
$\mathbf M(n,\gamma)$ equals the sum of all $L_m(\wt\lambda_m+\nu)$ such that 
$\map{\gamma}{\nu}{X_n}{X_m} \neq 0$. We denote by $\mathbf K(n,\gamma) 
\subset \mathbf M(n,\gamma)$ the sum of all $L_m(\wt\lambda_m + \wt\nu)$ with 
$\map{\gamma}{\nu}{X_n}{X_m} \neq 0$ and $\nu \neq \gamma^{X_m}$.
\end{defn}

It is immediate from the observation above that $\mathbf M(m,\nu) \subset 
\mathbf M(n,\gamma)$ if and only if $\map{\gamma}{\nu}{X_n}{X_m} \neq 0$. If 
$\mathbf 
S \subset \mathbf M$ is a submodule then $\mathbf S = \sum \mathbf 
M(n,\gamma)$, where the sum is over all pairs $(n,\wt\gamma)$ such that 
$L_n(\wt\lambda_n + \gamma) \subset \mathbf S$.  If $\wt\gamma \in 
\Pieri{\wt\lambda}{\mathbf F}$ then $\wt\gamma_{X_n} \in \Pieri{\wt\lambda_n}
{F_n}$ for $n \gg 0$, and we set $\mathbf M(n,\wt\gamma): = \mathbf M(n,
\wt\gamma_{X_n})$ and $\mathbf K(n,\wt\gamma): = \mathbf K(n,\wt\gamma_{X_n})$.

\begin{lem}
\label{lem:MK-quotient}
Let $\gamma \in \Pieri{\wt\lambda_n}{F_n}$. Then $\mathbf K(n,\gamma)$ is 
a proper submodule of $\mathbf M(n,\gamma)$ if and only if $\overline{\gamma} 
\in \Pieri{\wt\lambda}{\mathbf F}$, and in that case $\mathbf M(n,\gamma) / 
\mathbf K(n,\gamma) \cong \mathbf L(\wt\lambda + \overline{\gamma})$.
\end{lem}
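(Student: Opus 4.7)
The plan is to handle the two directions of the biconditional separately. The ``only if'' direction should follow from a direct observation at finite level, while the ``if'' direction requires constructing a $\gl(\infty)$-equivariant projection $\pi\colon \mathbf M \to \mathbf L(\wt\lambda + \overline\gamma)$ as the inductive limit of the finite-level projections $\pi_m\colon M_m \to L_m(\wt\lambda_m + \gamma^{X_m})$ provided by the classical Pieri decomposition at each level.

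For the forward direction I will prove the contrapositive: if $\overline\gamma \notin \Pieri{\wt\lambda}{\mathbf F}$ then $\mathbf M(n,\gamma) = \mathbf K(n,\gamma)$. By Lemmas \ref{lem:d-compatible-pieri}(i) and \ref{lem:AB-compatible-pieri} this situation can occur only when $\mathbf F = \SS^d\VV$, in which case some pair $i \prec i'$ in $\II$ violates the Pieri inequality $\wt\lambda_i \geq \wt\lambda_{i'} + \gamma_{i'}$. For every $m$ large enough that $\{i,i'\} \subset X_m$, the same inequality fails inside $X_m$, so $\gamma^{X_m} \notin \Pieri{\wt\lambda_m}{F_m}$. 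Consequently every simple summand $L_m(\wt\lambda_m + \nu)$ of $\mathbf M(n,\gamma) \cap M_m$ satisfies $\nu \neq \gamma^{X_m}$ and already sits inside $\mathbf K(n,\gamma)$; since every element of $\mathbf M(n,\gamma)$ eventually lies in some such $M_m$, the desired equality follows.

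For the converse assume $\overline\gamma \in \Pieri{\wt\lambda}{\mathbf F}$, so $\gamma^{X_m} \in \Pieri{\wt\lambda_m}{F_m}$ for every $m \geq n$ and each projection $\pi_m$ is well defined. I form the inductive system $\mathbf L := \varinjlim L_m(\wt\lambda_m + \gamma^{X_m})$ with transition maps $\iota'_m := \map{\gamma^{X_m}}{\gamma^{X_{m+1}}}{X_m}{X_{m+1}}$, which is a simple highest weight module isomorphic to $\mathbf L(\wt\lambda + \overline\gamma)$. The technical heart of the argument is the commutativity $\pi_{m+1} \circ \iota_m = \iota'_m \circ \pi_m$, where $\iota_m\colon M_m \to M_{m+1}$ is the natural embedding. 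Granting this, the $\pi_m$ assemble into a $\gl(\infty)$-equivariant map $\pi\colon \mathbf M \to \mathbf L$; by construction $\pi$ restricts to the identity on $\mathbf L$ and vanishes on every summand $L_{m'}(\wt\lambda_{m'} + \nu)$ with $\nu \neq \gamma^{X_{m'}}$, so in particular $\pi(\mathbf K(n,\gamma)) = 0$. Writing $\mathbf M(n,\gamma) = \mathbf K(n,\gamma) + \mathbf L$ and using that $\pi|_{\mathbf L}$ is injective forces $\mathbf L \cap \mathbf K(n,\gamma) = 0$, whence $\mathbf M(n,\gamma)/\mathbf K(n,\gamma) \cong \mathbf L \cong \mathbf L(\wt\lambda + \overline\gamma)$.

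The main obstacle is the commutativity above, which reduces to the vanishing $\map{\sigma}{\gamma^{X_{m+1}}}{X_m}{X_{m+1}} = 0$ for every $\sigma \in \Pieri{\wt\lambda_m}{F_m}$ reachable from $\gamma$ with $\sigma \neq \gamma^{X_m}$. I will deduce this from Propositions \ref{prop:d-nonzero} and \ref{prop:AB-nonzero}: in each of the three types of step a nonzero transition $\map{\sigma}{\tau}{X_m}{X_{m+1}}$ produces a difference $\overline\tau - \overline\sigma$ that is a non-negative sum of positive roots, so nonvanishing at $\tau = \gamma^{X_{m+1}}$ would force $\overline\gamma \succeq \overline\sigma$. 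Since $\sigma$ is reachable from $\gamma$ via nonzero transitions one also has $\overline\sigma \succeq \overline\gamma$, and therefore $\overline\sigma = \overline\gamma$, forcing $\sigma = \gamma^{X_m}$, a contradiction.
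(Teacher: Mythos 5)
Your approach is genuinely different from the paper's: the paper observes that $(\mathbf M(n,\gamma) \cap M_m)/(\mathbf K(n,\gamma) \cap M_m)$ equals $L_m(\wt\lambda_m + \gamma^{X_m})$ when $\gamma^{X_m} \in \Pieri{\wt\lambda_m}{F_m}$ and vanishes otherwise, and then computes $\mathbf M(n,\gamma)/\mathbf K(n,\gamma)$ in one step by exactness of direct limits; you instead construct an explicit $\gl(\infty)$-equivariant projection out of the finite-level Pieri projections $\pi_m$ and use it to split $\mathbf L$ off from $\mathbf K(n,\gamma)$. Your treatment of the forward direction --- persistence of a violated Pieri inequality involving a fixed pair $i \prec i'$ with $i' \in \supp \gamma$, plus the observation that Lemmas \ref{lem:d-compatible-pieri} and \ref{lem:AB-compatible-pieri} confine this failure to $\SS^d\VV$ --- is correct.

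There is, however, a fixable gap in the converse. You assert that the $\pi_m$ assemble into a map $\pi\colon \mathbf M \to \mathbf L$, but for this you would need $\map{\sigma}{\gamma^{X_{m+1}}}{X_m}{X_{m+1}} = 0$ for \emph{every} $\sigma \neq \gamma^{X_m}$ in $\Pieri{\wt\lambda_m}{F_m}$, whereas your reachability argument only covers $\sigma$ reachable from $\gamma$. When $\overline\sigma \prec \overline\gamma$, Propositions \ref{prop:d-nonzero} and \ref{prop:AB-nonzero} readily give a nonzero transition into $\gamma^{X_{m+1}}$ (take $\gamma^{X_m} - \sigma = \epsilon_i - \epsilon_{i'}$ with $i \in X^-$, $i' \in X^+$ in a step of type \emph{(iii)}), so the ladder $\pi_{m+1}\circ\iota_m = \iota'_m\circ\pi_m$ fails on $L_m(\wt\lambda_m+\sigma)$ and $\pi$ is not well-defined on all of $\mathbf M$. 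The remedy is simply to restrict the domain to $\mathbf M(n,\gamma)$: the summands of $\mathbf M(n,\gamma)\cap M_m$ are precisely the reachable ones, so there your vanishing holds, and the remainder of your deduction --- which only evaluates $\pi$ on elements of $\mathbf M(n,\gamma)$ --- goes through unchanged.
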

\begin{proof}
For every $m > n$ we have
\begin{align*}
  \frac{\mathbf M(n,\wt\gamma) \cap M_m}{\mathbf K(n,\wt\gamma) \cap M_m}
    &= \begin{cases}
      L_m(\wt\lambda + \gamma^{X_m})  
        & \mbox{ if } \gamma^{X_m} \in \Pieri{\wt\lambda_m}{F_m} \\
      \{0\}
        & \mbox{ if } \gamma^{X_m} \notin \Pieri{\wt\lambda_m}{F_m}.
    \end{cases}
\end{align*}
By the definition of $d$-exhaustions and $(A,B)$-exhaustions, 
$\overline{\gamma} \in \Pieri{\wt\lambda}{\mathbf F}$ if and only if 
$\gamma^{X_m} \in \Pieri{\wt\lambda}{F_m}$ for all $m \geq n$. On the other 
hand, since direct limits of vector spaces preserve exactness there is an 
isomorphism
\[
  \frac{\mathbf M(n,\gamma)}{\mathbf K(n,\gamma)} 
    \cong \varinjlim_m \frac{\mathbf M(n,\wt\gamma) \cap M_m}{\mathbf K(n,\wt\gamma) \cap M_m}
\]
and the result follows.
\end{proof}

\begin{lem}
\label{lem:ggcurly-M}
Suppose $\wt\gamma, \wt\nu \in \Pieri{\wt\lambda}{\mathbf F}$ and $\wt\nu 
\ggcurly \wt \gamma$. Then for every $n$ there exists $m > n$ such that 
$\mathbf M(m,\wt\nu) \subset \mathbf M(n,\wt\gamma)$.
\end{lem}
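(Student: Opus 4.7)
I would prove the lemma by induction on the minimum number $s$ of positive roots needed to write $\wt\nu - \wt\gamma = \sum_{k=1}^{s}(\epsilon_{i_k} - \epsilon_{j_k})$ with $h_\infty(\epsilon_{i_k},\epsilon_{j_k}) > 0$ for every $k$. The base case $s=0$ forces $\wt\nu = \wt\gamma$, so $m=n$ works. For the inductive step, the first task is to peel off a single root. I would choose a pair $(i_0,j_0)$ extremally --- for instance, so that $j_0$ is $\preceq$-minimal among the $j_k$ and $(i_0,j_0)$ satisfies $\pi_\infty\pi_\JJ(i_0) \prec \pi_\infty\pi_\JJ(j_0)$ --- and set $\wt\sigma := \wt\gamma + \epsilon_{i_0} - \epsilon_{j_0}$. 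Extremality of the chosen indices ensures $\wt\sigma \in \Pieri{\wt\lambda}{\mathbf F}$, with $\wt\sigma \ggcurly \wt\gamma$ and with $\wt\nu \ggcurly \wt\sigma$ expressible in $s-1$ roots. By the inductive hypothesis, $\mathbf M(m,\wt\nu) \subset \mathbf M(m',\wt\sigma)$ for some $m > m'$, provided we can establish the single-step case $\mathbf M(m',\wt\sigma) \subset \mathbf M(n,\wt\gamma)$.

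For the single-step case, the fact that $h_\infty(\epsilon_{i_0},\epsilon_{j_0}) > 0$ means that the interval $\interval{\pi_\JJ(i_0),\pi_\JJ(j_0)}$ in $\JJ$ is infinite. Combined with the construction of $d$- and $(A,B)$-exhaustions (Propositions \ref{prop:d-exhaustion} and \ref{prop:AB-exhaustion}), in which every class in $\JJ$ is revisited infinitely often, this allows one to choose $m' > n$ with $j_0 \in X_{m'}$ and such that some intermediate step $(X_\ell,X_{\ell+1})$ with $n \leq \ell < m'$ adds elements in a class $c$ strictly between $\pi_\JJ(i_0)$ and $\pi_\JJ(j_0)$. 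At this critical step the interior condition $X_\ell^- \neq \emptyset \neq X_\ell^+$ holds, with $i_0 \in X_\ell^-$ and $j_0 \in X_\ell^+$, so item (iii) of Proposition \ref{prop:d-nonzero} or \ref{prop:AB-nonzero} realizes a nonzero transition $\mu_\ell \to \mu_{\ell+1}$ with $\overline{\mu_{\ell+1}} - \overline{\mu_\ell} = \epsilon_{i_0} - \epsilon_{j_0}$. At every other step one uses items (i) and (ii) of the same propositions to prescribe the unique intermediate weight compatible with the target $\wt\sigma_{X_{m'}}$, producing a complete path $\wt\gamma_{X_n} = \mu_n \to \cdots \to \mu_{m'} = \wt\sigma_{X_{m'}}$ of nonzero step-wise transitions.

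The main obstacle is justifying that the composition of these step-wise nonzero transitions yields a nonzero overall transition $\phi(X_n,X_{m'},\wt\gamma_{X_n},\wt\sigma_{X_{m'}})$, since a priori this composition equals a sum over all intermediate weight sequences and different contributions could in principle cancel. To rule out cancellation I would invoke the structural control of Corollaries \ref{cor:d-nonzero-maps} and \ref{cor:AB-nonzero-maps}, which tightly restrict the form of any nonzero multi-step transition: combined with the observation that only the step over the class $c$ can realize the move $\epsilon_{i_0} - \epsilon_{j_0}$ (all other steps being forced by items (i) and (ii) to a weight-preserving or weight-shifting behavior uniquely determined by the target), these restrictions imply that exactly one intermediate sequence contributes to the sum. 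Hence the composition is nonzero, $\mathbf M(m',\wt\sigma) \subset \mathbf M(n,\wt\gamma)$ follows, and the induction closes to give $\mathbf M(m,\wt\nu) \subset \mathbf M(n,\wt\gamma)$.
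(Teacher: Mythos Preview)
Your inductive framework matches the paper's, but you have manufactured an unnecessary obstacle and left a gap elsewhere.

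The manufactured obstacle is the cancellation concern. Once each step-wise transition $\map{\mu_\ell}{\mu_{\ell+1}}{X_\ell}{X_{\ell+1}}$ is nonzero, the observation immediately following the definition of $\mathbf M(n,\gamma)$ already gives the containment $\mathbf M(\ell+1,\mu_{\ell+1}) \subset \mathbf M(\ell,\mu_\ell)$, and chaining these yields $\mathbf M(m',\wt\sigma) \subset \mathbf M(n,\wt\gamma)$ without any reference to the composite map $\map{\wt\gamma_{X_n}}{\wt\sigma_{X_{m'}}}{X_n}{X_{m'}}$. (The if-and-only-if then forces the composite to be nonzero, so cancellation in fact cannot occur; but this is a consequence, not something you need to verify.) Your appeal to Corollaries~\ref{cor:d-nonzero-maps} and~\ref{cor:AB-nonzero-maps} to argue uniqueness of the contributing path is therefore superfluous.

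The gap is the claim that $\wt\sigma = \wt\gamma + \epsilon_{i_0} - \epsilon_{j_0}$ lies in $\Pieri{\wt\lambda}{\mathbf F}$. Minimality of $j_0$ among the $j_k$ does not obviously guarantee this: nothing in your setup rules out $i_0 \in \supp \wt\gamma$, in which case $\wt\sigma \notin \supp \mathbf F$ for $\mathbf F = \bigwedge^d\VV$ or $\bigwedge^A\VV$, and for $\mathbf F = \SS^d\VV$ the horizontal-strip condition can likewise fail at $i_0$. The paper sidesteps this by not aiming for a prescribed global Pieri weight at each stage. It picks the last elements $i,i'$ of the sequences in $\wt\nu - \wt\gamma = \epsilon_I - \epsilon_J$, waits for a single step $(X_k,X_{k+1})$ with $Z_k$ in an infinite class $j$ satisfying $\pi_\JJ(i) \prec j \preceq \pi_\JJ(i')$, and sets $\gamma_1 := (\wt\gamma_{X_{k+1}} + \epsilon_i - \epsilon_{i'})'$ using the prime operation of Lemma~\ref{lem:pieri-lattice}, which lands in $\Pieri{\wt\lambda_{k+1}}{F_{k+1}}$ by construction. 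One nonzero single-step transition (item $(iii)$ of Proposition~\ref{prop:d-nonzero} or~\ref{prop:AB-nonzero}) then gives $\mathbf M(k+1,\gamma_1) \subset \mathbf M(k,\wt\gamma) \subset \mathbf M(n,\wt\gamma)$, and the induction proceeds on $h_\infty(\wt\nu,\overline{\gamma_1})$ rather than on the number of roots. Both of your difficulties disappear: only one step is involved, and its target lies in the Pieri set automatically.
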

\begin{proof}
We proceed by induction on $h_{\infty}(\wt\nu,\wt\gamma)$. If $h_{\infty}
(\wt\nu,\wt\gamma) = 0$ then $\wt\nu = \wt\gamma$ and we can take $m = n + 1$. 
Now suppose $\wt\nu - \wt\gamma = \epsilon_I - \epsilon_J$ with $I \neq 
\emptyset \neq J$, and let $i$ be the last element of $I$ and $i'$ be the 
last element of $J$. It follows that $h_{\infty}(\epsilon_{i}, 
\epsilon_{i'}) > 0$, hence there exists some infinite class $j \in \JJ$ such 
that $\pi_{\JJ}(i) \prec j \preceq \pi_{\JJ}(i')$. 

Choose $k > n$ such that $\pi_\JJ(Z_k) = \{j\}$, and set $\gamma_1 = 
(\wt\gamma_{X_{k+1}} + \epsilon_i - \epsilon_{i'})'$. By Propositions 
\ref{prop:d-nonzero} and \ref{prop:AB-nonzero} we have $\map{\wt\gamma_{X_k}}
{\gamma_1}{X_k}{X_{k+1}} \neq 0$ and so
\begin{align*}
  \mathbf M(k+1,\gamma_1) 
    \subset \mathbf M(k, \wt\gamma)
    \subset \mathbf M(n,\wt\gamma).
\end{align*}
By construction $h_\infty(\wt\nu,\overline{\gamma_1}) < h_{\infty}(\wt\nu,
\wt\gamma)$, and the induction hypothesis implies there exists $m > k$ 
for which $\mathbf M(m,\wt\nu) \subset \mathbf M(k,\gamma_1) \subset 
\mathbf M(n,\wt\gamma)$.
\end{proof}

\subsection{Semisimple submodules and subquotients}

\begin{lem}[Separation lemma]
\label{lem:mk-separation}
Let $\mathbf T \subsetneq \mathbf S \subset \mathbf M$ be submodules.  
There exists $\wt\gamma \in\Pieri{\wt\lambda}{\mathbf F}$ such that $\mathbf 
M(n, \wt\gamma) \subset \mathbf S$ and $\mathbf T \cap \mathbf M(n,
\wt\gamma) \subset \mathbf K(n, \wt\gamma)$.
\end{lem}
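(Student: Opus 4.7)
My plan is to extract the required $\wt\gamma$ from an iterative construction starting with a $\gl(X_{n_0})$-simple summand of $\mathbf S \setminus \mathbf T$ and propagating it forward through the exhaustion, correcting at each step when the naive main-line continuation would land in $\mathbf T$. Since $\mathbf T$ is a proper submodule of $\mathbf S$, some $v \in \mathbf S \setminus \mathbf T$ lies in $M_{n_0}$ for large enough $n_0$, and because $\mathbf S \cap M_{n_0}$ and $\mathbf T \cap M_{n_0}$ are $\gl(X_{n_0})$-submodules of the semisimple module $M_{n_0}$, I obtain a simple summand $L_{n_0}(\wt\lambda_{n_0} + \gamma_0)$ of $\mathbf S \cap M_{n_0}$ with $L_{n_0}(\wt\lambda_{n_0} + \gamma_0) \not\subset \mathbf T$; consequently $\mathbf M(n_0,\gamma_0) \subset \mathbf S$ with generator outside $\mathbf T$.

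I would then recursively build $(\gamma_k)_{k \geq 0}$ with $\gamma_k \in \Pieri{\wt\lambda_{n_0+k}}{F_{n_0+k}}$, nonzero transitions $\map{\gamma_k}{\gamma_{k+1}}{X_{n_0+k}}{X_{n_0+k+1}}$, and $L_{n_0+k}(\wt\lambda_{n_0+k} + \gamma_k) \not\subset \mathbf T$ at every step: since $\mathbf M(n_0+k,\gamma_k) \cap M_{n_0+k+1}$ decomposes as a $\gl(X_{n_0+k+1})$-direct sum over the Pieri weights $\mu$ reachable from $\gamma_k$, and since this sum contains $L_{n_0+k}(\wt\lambda_{n_0+k} + \gamma_k) \not\subset \mathbf T$, at least one summand $L_{n_0+k+1}(\wt\lambda_{n_0+k+1} + \mu)$ avoids $\mathbf T$ and can be taken as $\gamma_{k+1}$. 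To make $\overline{\gamma_k}$ eventually stabilize, I prefer the canonical main-line extension $(\gamma_k^{X_{n_0+k+1}})'$ whenever $L_{n_0+k+1}(\wt\lambda_{n_0+k+1} + (\gamma_k^{X_{n_0+k+1}})') \not\subset \mathbf T$, and only take a strictly larger $\gamma_{k+1}$ when forced. If the resulting sequence still fails to stabilize, Proposition \ref{prop:chasing}(ii) provides a short-circuit at any offending index $n$: the minimal replacement $(\gamma_n^{X_{n_0+n+1}})'$ connects by a nonzero composite transition to some later $\gamma_m$, and because $L_{n_0+m}(\wt\lambda_{n_0+m} + \gamma_m) \not\subset \mathbf T$ together with $\mathbf M(n_0+m, \gamma_m) \subset \mathbf M(n_0+n+1, (\gamma_n^{X_{n_0+n+1}})')$, this intermediary cannot lie in $\mathbf T$ either; factoring the composite through consecutive exhaustion steps, I interpolate a valid lower block. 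Lemmas \ref{lem:d-compatible-pieri} and \ref{lem:AB-compatible-pieri} guarantee that once the tail of the sequence is purely main-line the weights $\overline{\gamma_k}$ become constant, so iterating the cleanup yields a tail on which $\overline{\gamma_k}$ is a single $\wt\gamma \in \Pieri{\wt\lambda}{\mathbf F}$.

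Choosing $n$ large enough that $\gamma_k = \wt\gamma^{X_{n_0+k}}$ for all $n_0+k \geq n$, the submodule $\mathbf M(n,\wt\gamma)$ is generated by $L_n(\wt\lambda_n + \wt\gamma^{X_n}) \subset \mathbf S$, so $\mathbf M(n,\wt\gamma) \subset \mathbf S$. By Lemma \ref{lem:MK-quotient}, $\mathbf M(n,\wt\gamma)/\mathbf K(n,\wt\gamma) \cong \mathbf L(\wt\lambda + \wt\gamma)$ is simple; were $\mathbf T \cap \mathbf M(n,\wt\gamma) \not\subset \mathbf K(n,\wt\gamma)$, the image of the intersection in this simple quotient would be everything, forcing some main-line $L_m(\wt\lambda_m + \wt\gamma^{X_m})$ with $m \geq n$ into $\mathbf T$ and contradicting the construction. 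I expect the hardest step to be the termination of the cleanup protocol — ruling out an infinite cascade of corrections that keeps shifting the putative limit of $\overline{\gamma_k}$; the monotonicity of the sequence, the local finiteness of $\Pieri{\wt\lambda}{\mathbf F}$ (Lemma \ref{lem:Pieri-varia}), and the stabilization built into the main-line extension should together ensure that only finitely many corrections are actually needed.
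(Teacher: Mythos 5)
Your argument is essentially the paper's proof: extract a first block $L_{n_0}(\wt\lambda_{n_0}+\gamma_0)\subset\mathbf S\setminus\mathbf T$, build a sequence recursively by preferring the minimal ($=$ main-line) continuation among the transition-reachable Pieri weights avoiding $\mathbf T$, and invoke Proposition~\ref{prop:chasing}(ii) to force eventual stabilization.

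The one place you lose the thread is the ``cleanup protocol'' and the worry about ``an infinite cascade of corrections.'' No iteration is needed. You already have all the pieces of a one-shot contradiction: if the sequence does not stabilize, Proposition~\ref{prop:chasing}(ii) produces indices $n<m$ with $(\gamma_n^{X_{n+1}})'\prec\gamma_{n+1}$ and $\map{(\gamma_n^{X_{n+1}})'}{\gamma_m}{X_{n+1}}{X_m}\neq0$. By your own selection rule, $(\gamma_n^{X_{n+1}})'\prec\gamma_{n+1}$ can only happen because $L_{n+1}(\wt\lambda_{n+1}+(\gamma_n^{X_{n+1}})')\subset\mathbf T$; since $\mathbf T$ is a $\gl(\infty)$-submodule, this forces $\mathbf M(n+1,(\gamma_n^{X_{n+1}})')\subset\mathbf T$, hence $L_m(\wt\lambda_m+\gamma_m)\subset\mathbf T$ via the nonzero composite transition, contradicting $\gamma_m\notin\mathcal L(\mathbf T\cap M_m)$. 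That contradiction ends the stabilization argument; there is nothing left to interpolate or iterate, and the termination question you flag as ``hardest'' does not arise. Replacing the cleanup paragraph with this direct contradiction makes the proof match the paper and removes the only uncertainty in your write-up.
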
 
\begin{proof}
Let $T_n := \mathbf T \cap M_n$ and $S_n := \mathbf S \cap M_n$ for all $n \geq 
0$. Then there exists $r \in \ZZ_{>0}$ such that $T_n \subsetneq S_n$ for all 
$n \geq r$, and in particular there exists $\mu_r \in \Pieri{\wt\lambda_r}{F_r}$ 
such that $L_r(\wt\lambda_r + \mu_r) \subset S_r \setminus T_r$. Thus any 
$v \in L_r(\wt\lambda_r + \mu_r)$ is in $\mathbf S$ but not in $\mathbf T$, and 
hence $\phi(X_r,X_n)(v) \in S_n \setminus T_n$. In particular there is at least 
one $\mu_n \in \Pieri{\wt\lambda_n}{X_n}$ with $L_n(\wt\lambda_n + \mu_n)
\subset S_n \setminus T_n$ and $\map{\mu_r}{\mu_n}{X_r}{X_n} \neq 0$. 

We now define recursively a sequence $(\mu_n)_{n \geq r}$ with $\mu_n \in 
\Pieri{\wt\lambda_n}{F_n}$. The first term of the sequence is $\mu_r$ as above, 
and assuming $\mu_n$ is defined we let $\mu_{n+1}$ be a minimal element in 
the set of all weights in $\Pieri{\wt\lambda_{n+1}}{F_n}$ such that 
$\map{\mu_n}{\mu_{n+1}}{X_n}{X_{n+1}} \neq 0$ and $L_{n+1}(\wt\lambda_{n+1} + 
\mu_{n+1}) \subset S_{n+1} \setminus T_{n+1}$. We claim that there exists 
$\wt\gamma \in \Pieri{\wt\lambda}{\mathbf F}$ with $\mu_m = 
\wt\gamma_{X_m}$ for $m \gg 0$. Indeed, Proposition \ref{prop:chasing} states that if there is no such 
$\wt\gamma$ then there exist $n,m$ such that $\map{(\mu_n^{X_{n+1}})'}{\mu_m}
{X_{n+1}}{X_m} \neq 0$ and $(\mu_n^{X_{n+1}})' \prec \mu_n$. But 
$(\mu_n^{X_{n+1}})' \prec \mu_n$ implies that $L_{n+1}(\wt\lambda_{n+1} + 
(\mu_n^{X_{n+1}})') \subset T_{n+1}$ and hence $L_m(\wt\lambda_m + \mu_m)
\subset T_m$, which contradicts the choice of $\mu_m$.
\end{proof}

As an immediate consequence we see that every simple subquotient of $\mathbf M$
is of the form $\mathbf L(\wt\lambda + \wt\gamma)$ with $
\wt\gamma \in \Pieri{\wt\lambda}{\mathbf F}$.
\begin{prop}
\label{prop:simple-submodules}
Let $\mathbf T \subset \mathbf S \subset \mathbf M$. The quotient $\mathbf S/
\mathbf T$ has a submodule isomorphic to $\mathbf L(\wt\lambda+\wt\gamma)$ 
for some $\wt\gamma \in \Pieri{\wt\lambda}{\mathbf F}$
if and only if $\mathbf M(n,\wt\gamma) \subset \mathbf S$ and $\mathbf M(n,
\wt\gamma) \cap \mathbf T = \mathbf K(n,\wt\gamma)$ for $n \gg 0$. Moreover, 
if $\mathbf 
S / \mathbf T$ is simple then $\wt\gamma$ is unique and $\mathbf S / \mathbf T 
\cong \mathbf L(\wt\lambda+\wt\gamma)$.
\end{prop}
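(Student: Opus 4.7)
The plan is to split this biconditional into its two directions, both of which will follow nearly mechanically from the two preceding results: Lemma \ref{lem:MK-quotient}, which identifies $\mathbf M(n,\wt\gamma)/\mathbf K(n,\wt\gamma)$ with $\mathbf L(\wt\lambda + \wt\gamma)$ whenever $\wt\gamma \in \Pieri{\wt\lambda}{\mathbf F}$, and the Separation Lemma \ref{lem:mk-separation}.

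For the ``if'' direction I would consider the composition $\mathbf M(n,\wt\gamma) \hookrightarrow \mathbf S \twoheadrightarrow \mathbf S/\mathbf T$. Its kernel is $\mathbf M(n,\wt\gamma)\cap\mathbf T$, which by hypothesis equals $\mathbf K(n,\wt\gamma)$, so Lemma \ref{lem:MK-quotient} identifies the image with $\mathbf L(\wt\lambda + \wt\gamma)$, yielding the desired embedding into $\mathbf S/\mathbf T$. This direction is immediate.

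For the ``only if'' direction, starting from a given $\mathbf N \subset \mathbf S/\mathbf T$ with $\mathbf N \cong \mathbf L(\wt\lambda + \wt\gamma)$, I would pass to the preimage $\mathbf S'$ of $\mathbf N$ in $\mathbf S$, so that $\mathbf T \subsetneq \mathbf S'$, and feed the pair $(\mathbf T, \mathbf S')$ into the Separation Lemma. That produces some $\wt\gamma' \in \Pieri{\wt\lambda}{\mathbf F}$ and an index $n$ with $\mathbf M(n,\wt\gamma') \subset \mathbf S'$ and $\mathbf M(n,\wt\gamma')\cap \mathbf T \subset \mathbf K(n,\wt\gamma')$. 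The composition $\mathbf M(n,\wt\gamma') \hookrightarrow \mathbf S' \twoheadrightarrow \mathbf N$ then factors through the simple module $\mathbf M(n,\wt\gamma')/\mathbf K(n,\wt\gamma') \cong \mathbf L(\wt\lambda + \wt\gamma')$ and is nonzero because $\mathbf K(n,\wt\gamma')$ is a proper submodule of $\mathbf M(n,\wt\gamma')$. Since $\mathbf N$ is also simple, this nonzero map must be an isomorphism $\mathbf L(\wt\lambda + \wt\gamma') \cong \mathbf L(\wt\lambda + \wt\gamma)$, and I would then conclude $\wt\gamma = \wt\gamma'$ using the fact that the highest weight of a $\lie b$-highest weight module is uniquely determined. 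The equality $\mathbf M(n,\wt\gamma)\cap \mathbf T = \mathbf K(n,\wt\gamma)$ then falls out, since the kernel of the surjection $\mathbf M(n,\wt\gamma) \twoheadrightarrow \mathbf N \cong \mathbf M(n,\wt\gamma)/\mathbf K(n,\wt\gamma)$ must be exactly $\mathbf K(n,\wt\gamma)$.

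For the ``moreover'' clause I would just apply the ``only if'' direction to the simple submodule $\mathbf N = \mathbf S/\mathbf T$ of itself, obtaining some $\wt\gamma$ with $\mathbf S/\mathbf T \cong \mathbf L(\wt\lambda + \wt\gamma)$; uniqueness of $\wt\gamma$ is then the same statement that the highest weight of a simple $\lie b$-highest weight module is unique. I do not expect a genuine obstacle: all the heavy lifting has been done in Lemmas \ref{lem:MK-quotient} and \ref{lem:mk-separation}. The only step requiring care is matching the $\wt\gamma'$ produced by the Separation Lemma against the $\wt\gamma$ labeling the given $\mathbf N$, for which both the simplicity of $\mathbf N$ and the uniqueness of highest weights are invoked.
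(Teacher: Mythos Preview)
Your approach is sound, but it diverges from the paper's in the ``only if'' direction, and there is one small slip worth flagging.

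\textbf{Comparison with the paper.} For the ``only if'' direction the paper does \emph{not} invoke the Separation Lemma. Instead it picks a vector $s \in \mathbf S$ of weight $\wt\lambda + \wt\gamma$ that is highest modulo $\mathbf T$, and works at the finite level: for $n \gg 0$ the image of $s$ in $S_n/T_n$ is a $\gl(X_n)$-highest weight vector, so the $\gl(X_n)$-module it generates is $L_n(\wt\lambda_n + \wt\gamma_n) \oplus T'_n$ with $T'_n \subset T_n$. From this one reads off directly that $\mathbf M(n,\wt\gamma) \subset \mathbf S$, and a short transition-map argument shows $\mathbf K(n,\wt\gamma) \subset \mathbf T$. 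Your route---apply the Separation Lemma to $(\mathbf T,\mathbf S')$ to produce a $\wt\gamma'$, then identify $\wt\gamma' = \wt\gamma$---is equally valid and arguably cleaner, since it treats Lemma~\ref{lem:mk-separation} as the single engine driving everything. The paper's route has the advantage of working directly with the given $\wt\gamma$ and yielding the conclusion for all $n \gg 0$ at once; in your argument you should note that once the conditions hold for a single $n$ they propagate to all $m \geq n$, since $\mathbf M(m,\wt\gamma) \subset \mathbf M(n,\wt\gamma)$ and $\mathbf K(m,\wt\gamma) = \mathbf M(m,\wt\gamma) \cap \mathbf K(n,\wt\gamma)$.

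\textbf{The slip.} You write that the composition $\mathbf M(n,\wt\gamma') \to \mathbf N$ ``factors through'' $\mathbf M(n,\wt\gamma')/\mathbf K(n,\wt\gamma')$. But the Separation Lemma gives you the inclusion $\mathbf M(n,\wt\gamma') \cap \mathbf T \subset \mathbf K(n,\wt\gamma')$, which is the wrong direction for that factorization. The correct argument is: the map $\mathbf M(n,\wt\gamma') \to \mathbf N$ is nonzero (its kernel is contained in the proper submodule $\mathbf K(n,\wt\gamma')$), hence surjective since $\mathbf N$ is simple; then $\mathbf K(n,\wt\gamma')/(\mathbf M(n,\wt\gamma')\cap\mathbf T)$ is a proper submodule of the simple module $\mathbf N$, so it is zero, giving $\mathbf K(n,\wt\gamma') = \mathbf M(n,\wt\gamma')\cap\mathbf T$. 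Now the map does factor through $\mathbf L(\wt\lambda + \wt\gamma')$ and you can conclude $\wt\gamma = \wt\gamma'$ as you do.
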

\begin{proof}
For the ``if part'', the hypothesis implies that there is a nonzero map $\mathbf
L(\wt\lambda+\wt\gamma) \cong \mathbf M(n,\wt\gamma)/\mathbf K(n,\wt\gamma) 
\to \mathbf S/\mathbf T$. For the ``only if part'', suppose $s \in \mathbf S$ is a 
vector of weight $\wt\lambda+\wt\gamma$ which is a highest weight vector modulo 
$\mathbf T$. Then $s \in S_n = \mathbf S \cap M_n$ for all $n \gg 0$, and it 
must be a highest weight vector modulo $T_n = \mathbf T \cap M_n$. Thus 
the module generated by $s$ is equal to $L_n(\wt\lambda_n+\wt\gamma_n) \oplus 
T'_n$, with $T'_n \subset T_n$ and $L_n(\wt\lambda_n + \wt\gamma_n) \subset 
S_n / T'_n$ for all $n \gg 0$. Consequently $\mathbf M(n,\wt\lambda) \subset 
\mathbf S$, while $\mathbf M(n,\wt\lambda) \cap \mathbf T \subset \mathbf 
K(n,\wt\gamma)$.

The image of $L_n(\wt\lambda_n + \wt\gamma_n)$ under the map $\phi(X_n,X_{n+1})$ 
is contained in $L_{n+1}(\wt\lambda_{n+1} + \wt\gamma_{n+1}) \oplus T'_{n+1}
\subset M_{n+1}$. It follows that if $\map{\wt\gamma}{\wt\nu}{X_n}{X_{n+1}} 
\neq 0$ and $\wt\nu \neq \wt\gamma$ then the image of $\map{\wt\gamma}{\wt\nu}
{X_n}{X_{n+1}}$ lies in $T'_{n+1} \subset \mathbf T$. Therefore, by definition 
$\mathbf K(n,\wt\gamma) \subset \mathbf T$. It follows now from Lemma 
\ref{lem:mk-separation} that every simple subquotient of $\mathbf M$ is 
isomorphic to $\mathbf L(\wt\lambda + \wt\gamma)$ for some $\wt\gamma \in 
\Pieri{\wt\lambda}{\mathbf F}$.
\end{proof}

We now show that a subquotient of $\mathbf M$ is characterized by its simple 
constituents. 
\begin{defn}
Let $\mathbf Q$ be a subquotient of $\mathbf M$. We set
\[
  \mathcal L(\mathbf Q) := \{\wt\gamma \in \Pieri{\wt\lambda}{\mathbf F}
    \mid [\mathbf Q: \mathbf L(\wt\lambda+\wt\gamma)] \neq 0\}.  
\]
\end{defn}

\begin{lem}
\label{lem:support-sets}
Let $\mathbf S, \mathbf T \subset \mathbf M$ be submodules. 
\begin{enumerate}
  \item If $\wt\gamma \in \mathcal L(\mathbf S)$ and $\wt\nu \ggcurly \wt\gamma$
    then $\wt\nu \in \mathcal L(\mathbf S)$.

  \item $\mathcal L(\mathbf T) \subset \mathcal L(\mathbf S)$ if and only if 
    $\mathbf T \subset \mathbf S$.

  \item If $\mathbf T \subset \mathbf S$ then $\mathcal L(\mathbf 
    S / \mathbf T) = \mathcal L(\mathbf S) \setminus \mathcal L(\mathbf T)$.
\end{enumerate}
\end{lem}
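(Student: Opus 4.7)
My plan is to reduce all three statements to a single criterion expressed in terms of the submodules $\mathbf M(n,\wt\gamma)$ and $\mathbf K(n,\wt\gamma)$. Combining Proposition \ref{prop:simple-submodules} with Lemma \ref{lem:MK-quotient} yields the dictionary: for a submodule $\mathbf N \subset \mathbf M$, one has $\wt\gamma \in \mathcal L(\mathbf N)$ if and only if $\mathbf M(n,\wt\gamma) \subset \mathbf N$ for all $n \gg 0$; and for a quotient $\mathbf S/\mathbf T$, one has $\wt\gamma \in \mathcal L(\mathbf S/\mathbf T)$ if and only if $\mathbf M(n,\wt\gamma) \subset \mathbf S$ and $\mathbf M(n,\wt\gamma) \cap \mathbf T \subset \mathbf K(n,\wt\gamma)$ for all $n \gg 0$. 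The nontrivial direction of the latter follows by choosing $\mathbf T' = \mathbf T + \mathbf K(n,\wt\gamma)$ and $\mathbf S' = \mathbf T + \mathbf M(n,\wt\gamma)$ in Proposition \ref{prop:simple-submodules}, and verifying $\mathbf M(n,\wt\gamma) \cap \mathbf T' = \mathbf K(n,\wt\gamma)$ by a short diagram-chase.

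With this dictionary, item (i) is immediate via Lemma \ref{lem:ggcurly-M}: pick $n$ with $\mathbf M(n,\wt\gamma) \subset \mathbf S$, then $\wt\nu \ggcurly \wt\gamma$ yields $m > n$ with $\mathbf M(m,\wt\nu) \subset \mathbf M(n,\wt\gamma) \subset \mathbf S$, so $\wt\nu \in \mathcal L(\mathbf S)$. For item (ii), the forward implication is trivial. For the converse I would argue by contrapositive: if $\mathbf T \not\subset \mathbf S$, the strict inclusion $\mathbf S \cap \mathbf T \subsetneq \mathbf T$ admits Lemma \ref{lem:mk-separation}, yielding $\wt\gamma$ with $\mathbf M(n,\wt\gamma) \subset \mathbf T$ (so $\wt\gamma \in \mathcal L(\mathbf T)$) and $\mathbf M(n,\wt\gamma) \cap \mathbf S \subset \mathbf K(n,\wt\gamma)$. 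If $\wt\gamma$ were in $\mathcal L(\mathbf S)$, then $\mathbf M(m,\wt\gamma) \subset \mathbf S$ for some $m > n$, which combined with $\mathbf M(m,\wt\gamma) \subset \mathbf M(n,\wt\gamma)$ would force $\mathbf M(m,\wt\gamma) \subset \mathbf K(n,\wt\gamma)$; this is ruled out by tracing the highest weight vector of $L_m(\wt\lambda_m + \wt\gamma^{X_m})$, which modulo $\mathbf K(n,\wt\gamma)$ is a nonzero multiple of the highest weight vector of $\mathbf M(n,\wt\gamma)/\mathbf K(n,\wt\gamma) \cong \mathbf L(\wt\lambda + \wt\gamma)$ by the nonvanishing transition map $\map{\wt\gamma^{X_n}}{\wt\gamma^{X_m}}{X_n}{X_m}$.

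Item (iii) splits into two inclusions. The containment $\mathcal L(\mathbf S/\mathbf T) \subset \mathcal L(\mathbf S)$ is clear since subquotients of $\mathbf S/\mathbf T$ lift to subquotients of $\mathbf S$, while disjointness with $\mathcal L(\mathbf T)$ follows from combining the two criteria: a common $\wt\gamma$ would satisfy both $\mathbf M(n,\wt\gamma) \subset \mathbf T$ and $\mathbf M(n,\wt\gamma) \cap \mathbf T \subset \mathbf K(n,\wt\gamma)$, forcing $\mathbf M(n,\wt\gamma) = \mathbf K(n,\wt\gamma)$ and contradicting Lemma \ref{lem:MK-quotient}. The reverse inclusion $\mathcal L(\mathbf S) \setminus \mathcal L(\mathbf T) \subset \mathcal L(\mathbf S/\mathbf T)$ is where I expect the main obstacle. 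Given $\wt\gamma$ there, I need to verify $\mathbf M(n,\wt\gamma) \cap \mathbf T \subset \mathbf K(n,\wt\gamma)$ for $n \gg 0$; were this to fail, some $t \in \mathbf T \cap M_k \cap \mathbf M(n,\wt\gamma)$ (for a suitable large $k$) would have nonzero $L_k(\wt\lambda_k + \wt\gamma^{X_k})$-component in the isotypic decomposition of $M_k$. The key observation is that $\mathbf T \cap M_k$ is a $\gl(X_k)$-submodule of the semisimple $\gl(X_k)$-module $M_k$ and hence itself a direct sum of the simple summands appearing in it; a nonzero $\wt\gamma^{X_k}$-component therefore forces the entire summand $L_k(\wt\lambda_k + \wt\gamma^{X_k}) \subset \mathbf T$, whence $\mathbf M(k,\wt\gamma) \subset \mathbf T$, contradicting $\wt\gamma \notin \mathcal L(\mathbf T)$.
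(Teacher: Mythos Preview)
Your proposal is correct and follows essentially the same route as the paper: set up the dictionary $\wt\gamma \in \mathcal L(\mathbf N) \Leftrightarrow \mathbf M(n,\wt\gamma) \subset \mathbf N$ via Proposition \ref{prop:simple-submodules}, then deduce (i) from Lemma \ref{lem:ggcurly-M}, (ii) from the separation lemma, and (iii) from the dictionary again. Your write-up supplies more detail in two places the paper leaves terse---the highest-weight-vector trace in (ii) and the semisimplicity-of-$M_k$ argument for the reverse inclusion in (iii)---but the underlying mechanism is identical.
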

\begin{proof}
Suppose $\wt\gamma \in \mathcal L(\mathbf S)$. By Proposition 
\ref{prop:simple-submodules} we have $\mathbf M(n,\wt\gamma) \subset \mathbf S$ 
for some $n \gg 0$ and by Lemma \ref{lem:ggcurly-M} there exists $m > n$ such
that $\mathbf M(m,\wt\nu) \subset \mathbf M(n,\wt\gamma)$. Hence $\wt\nu \in 
\mathcal L(\mathbf S)$ by Proposition \ref{prop:simple-submodules}.

The ``if'' part of the second item is clear. For the ``only if'' part, suppose 
$\mathcal 
L(\mathbf T) \subset \mathcal L(\mathbf S)$ but $\mathbf S \cap \mathbf T 
\subsetneq \mathbf T$. By the separation lemma \ref{lem:mk-separation} there 
exists $\wt\gamma \in \mathcal L(\mathbf T)$ such that $\mathbf M(n,
\wt\gamma) \subset \mathbf T$ and $\mathbf M(n,\wt\gamma) \cap \mathbf T \cap 
\mathbf S = \mathbf M(n,\wt\gamma) \cap \mathbf S = \mathbf K(n,\wt\gamma)$ for 
all $n \gg 0$. This implies $\textbf M(m,\wt\gamma) \cap \mathbf S = \mathbf 
K(m,\wt\gamma)$ for all $m \geq n$, and so $\wt\gamma \notin \mathcal L(\mathbf 
S)$ by Proposition \ref{prop:simple-submodules}, a contradiction.

We now prove the third item. By the separation lemma $\wt\gamma \in \mathcal 
L(\mathbf S/\mathbf T)$ if and only if $\mathbf M(n,\wt\gamma) \subset \mathbf 
S$ and $\mathbf M(n,\wt\gamma) \cap \mathbf T \subset \mathbf K(n,\wt\gamma)$ 
for $n \gg 0$, and by Proposition \ref{prop:simple-submodules} this is 
equivalent to $\wt \gamma \in \mathcal L(\mathbf S) \setminus \mathcal 
L(\mathbf T)$. 
\end{proof}

The following result gives necessary and sufficient conditions for a quotient 
$\mathbf S / \mathbf T$ to be semisimple.  
\begin{prop}
\label{prop:ss-quotients}
Let $\mathbf T \subset \mathbf S \subset \mathbf M$ be a chain of submodules.
\begin{enumerate}
  \item If $\mathbf S/\mathbf T$ is semisimple and $\wt\nu, \wt\gamma \in 
    \mathcal L(\mathbf S)$ with $\wt\nu \ggcurly \wt\gamma$ and $\wt\nu \neq 
    \wt\gamma$, then $\wt\nu \in \mathcal L(\mathbf T)$.

  \item Suppose $\mathbf F$ is a $\lie b$-highest weight module, and that 
    $\wt\nu \in \mathcal L(\mathbf T)$ for all $\wt\nu, \wt\gamma \in \mathcal 
    L(\mathbf S)$ with $\wt\nu \ggcurly \wt\gamma$ and $\wt\nu \neq \wt\gamma$. Then $\mathbf S/\mathbf T$ is semisimple.
\end{enumerate}
\end{prop}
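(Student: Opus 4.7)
For item $(i)$, I would argue by contradiction, assuming $\wt\nu \notin \mathcal L(\mathbf T)$. If $\wt\gamma \in \mathcal L(\mathbf T)$, then Lemma \ref{lem:support-sets}$(i)$ applied to $\mathbf T$ already gives $\wt\nu \in \mathcal L(\mathbf T)$, contradicting the assumption; hence both $\wt\gamma, \wt\nu \in \mathcal L(\mathbf S/\mathbf T)$ by Lemma \ref{lem:support-sets}$(iii)$. Applying Proposition \ref{prop:simple-submodules} to $\wt\gamma$, for $n \gg 0$ we have $\mathbf M(n,\wt\gamma) \subset \mathbf S$ and $\mathbf M(n,\wt\gamma) \cap \mathbf T = \mathbf K(n,\wt\gamma)$, so the image of $\mathbf M(n,\wt\gamma)$ in $\mathbf S/\mathbf T$ is isomorphic to $\mathbf L(\wt\lambda + \wt\gamma)$. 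Lemma \ref{lem:ggcurly-M} supplies $m > n$ with $\mathbf M(m,\wt\nu) \subset \mathbf M(n,\wt\gamma)$, and I consider the $\lie b_{X_m}$-highest weight vector $v$ of $L_m(\wt\lambda_m + \wt\nu_{X_m}) \subset \mathbf M(m,\wt\nu)$, which has $\lie h$-weight $\wt\lambda + \wt\nu$. Since $\wt\nu \ggcurly \wt\gamma$ together with $\wt\nu \neq \wt\gamma$ force $\wt\nu \succ \wt\gamma$, this weight does not occur in $\mathbf L(\wt\lambda + \wt\gamma)$, so $v$ must lie in $\mathbf M(n,\wt\gamma) \cap \mathbf T = \mathbf K(n,\wt\gamma) \subset \mathbf T$. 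As $v$ generates $L_m(\wt\lambda_m + \wt\nu_{X_m})$ under $\gl(X_m)$, which in turn generates $\mathbf M(m,\wt\nu)$ under $\gl(\infty)$, we obtain $\mathbf M(m,\wt\nu) \subset \mathbf T$ and hence $\wt\nu \in \mathcal L(\mathbf T)$, the desired contradiction.

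For item $(ii)$, suppose $\mathbf S/\mathbf T$ is not semisimple. Let $\mathbf N = \soc(\mathbf S/\mathbf T)$ and let $\mathbf S_0 \subsetneq \mathbf S$ be its preimage. The separation Lemma \ref{lem:mk-separation} applied to $\mathbf S_0 \subsetneq \mathbf S$ produces $\wt\gamma \in \Pieri{\wt\lambda}{\mathbf F}$ and $n$ with $\mathbf M(n,\wt\gamma) \subset \mathbf S$, $\mathbf M(n,\wt\gamma) \cap \mathbf S_0 \subset \mathbf K(n,\wt\gamma)$, and, by Lemma \ref{lem:support-sets}$(iii)$, $\wt\gamma \in \mathcal L(\mathbf S) \setminus \mathcal L(\mathbf S_0)$. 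The central intermediate claim is the identification $\mathcal L(\mathbf M(n,\wt\gamma)) = \{\wt\nu \in \Pieri{\wt\lambda}{\mathbf F} : \wt\nu \ggcurly \wt\gamma\}$: the inclusion $\supset$ is Lemma \ref{lem:ggcurly-M}, while for the reverse one, any $\wt\nu \in \mathcal L(\mathbf M(n,\wt\gamma))$ yields by Proposition \ref{prop:simple-submodules} a nonzero transition $\map{\wt\gamma_{X_n}}{\wt\nu_{X_m}}{X_n}{X_m}$; decomposing it into one-step maps and applying Propositions \ref{prop:d-nonzero}/\ref{prop:AB-nonzero}, each increment $\overline{\mu_{k+1}} - \overline{\mu_k}$ is either zero or a root contributing $h_\infty = h$, and Proposition \ref{prop:chasing}$(i)$ (where the $\lie b$-highest weight hypothesis is critically used) guarantees the intermediate sequence stabilizes, so the total $\wt\nu - \wt\gamma$ has the form required for $\wt\nu \ggcurly \wt\gamma$. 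Lemma \ref{lem:support-sets}$(iii)$ applied to $\mathbf K(n,\wt\gamma) \subset \mathbf M(n,\wt\gamma)$ (with quotient $\mathbf L(\wt\lambda + \wt\gamma)$) then gives $\mathcal L(\mathbf K(n,\wt\gamma)) = \{\wt\nu \ggcurly \wt\gamma : \wt\nu \neq \wt\gamma\}$. The hypothesis of $(ii)$ places each such $\wt\nu$ into $\mathcal L(\mathbf T)$, so Lemma \ref{lem:support-sets}$(ii)$ yields $\mathbf K(n,\wt\gamma) \subset \mathbf T$. Combined with the separation inclusion, $\mathbf M(n,\wt\gamma) \cap \mathbf T = \mathbf K(n,\wt\gamma)$, and Proposition \ref{prop:simple-submodules} realizes $\mathbf L(\wt\lambda + \wt\gamma)$ as a submodule of $\mathbf S/\mathbf T$. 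This submodule necessarily lies in $\mathbf N$, so $\wt\gamma \in \mathcal L(\mathbf S_0)$, the desired contradiction.

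The main obstacle is the converse inclusion in the identification $\mathcal L(\mathbf M(n,\wt\gamma)) = \{\wt\nu : \wt\nu \ggcurly \wt\gamma\}$, namely controlling the finite error term $\epsilon_I - \epsilon_J$ appearing in the decomposition of Corollaries \ref{cor:d-nonzero-maps}/\ref{cor:AB-nonzero-maps}. The $\lie b$-highest weight hypothesis enters here essentially through Proposition \ref{prop:chasing}$(i)$, and the non-example \ref{ex:non-ex} shows that stabilization can genuinely fail without it. The Fock case $\mathbf F = \bigwedge^A \VV$ with $A$ $\JJ_\infty$-initial demands extra bookkeeping because $\overline{\wt\gamma_{X_n}}$ carries the infinite tail $\epsilon_{A \setminus X_n}$, but the $\JJ_\infty$-initiality keeps this analysis tractable.
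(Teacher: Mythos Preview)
Your argument for item $(i)$ is correct and essentially matches the paper's, with only a cosmetic difference: where the paper observes directly that $\mathbf M(m,\wt\nu) \subset \mathbf K(n,\wt\gamma) \subset \mathbf T$, you use a weight comparison to force the highest weight vector of $L_m(\wt\lambda_m+\wt\nu_m)$ into $\mathbf T$.

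For item $(ii)$, your overall architecture (contradiction via the socle, together with the identification $\mathcal L(\mathbf M(n,\wt\gamma)) = \{\wt\nu : \wt\nu \ggcurly \wt\gamma\}$) is sound, and the paper also rests on this identification. The gap is in your justification of the inclusion $\mathcal L(\mathbf M(n,\wt\gamma)) \subset \{\wt\nu : \wt\nu \ggcurly \wt\gamma\}$. Your claim that ``each increment $\overline{\mu_{k+1}} - \overline{\mu_k}$ is either zero or contributes $h_\infty = h$'' is false for an arbitrary exhaustion: in case $(ii)$ of Proposition~\ref{prop:d-nonzero} (where $X_k^- = \emptyset$ and the $\JJ$-class $j$ is finite) one has $h_\infty(\overline{\mu_{k+1}},\overline{\mu_k}) = 0$ while $h(\overline{\mu_{k+1}},\overline{\mu_k}) > 0$, so that step is \emph{not} $\ggcurly$. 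Your appeal to Proposition~\ref{prop:chasing}$(i)$ does not rescue this: that result says an infinite sequence of nonzero transitions stabilizes, but it gives no control over the individual increments along a fixed finite path from $\wt\gamma_{X_n}$ to $\wt\nu_{X_m}$, and hence cannot force the error term $\epsilon_I - \epsilon_J$ of Corollaries~\ref{cor:d-nonzero-maps}/\ref{cor:AB-nonzero-maps} to vanish.

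The paper's fix is both simpler and is the place where the $\lie b$-highest weight hypothesis actually enters. When $\mathbf F$ is a $\lie b$-highest weight module, $\JJ$ has a minimum (respectively, $A$ is an initial subset), and Propositions~\ref{prop:d-exhaustion} and~\ref{prop:AB-exhaustion} allow one to choose the exhaustion so that every $d$- or $B$-step has $X_n^- \neq \emptyset$ and every $A$-step has $X_n^+ \neq \emptyset$. Case $(ii)$ of Propositions~\ref{prop:d-nonzero} and~\ref{prop:AB-nonzero} then never occurs, and every single-step nonzero transition already satisfies $\overline{\mu_{k+1}} \ggcurly \overline{\mu_k}$; composing, the converse inclusion is immediate. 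With this in hand the paper argues directly rather than by contradiction: for each $\wt\gamma \in \mathcal L(\mathbf S/\mathbf T)$ one obtains $\mathcal L(\mathbf K(n,\wt\gamma)) \subset \mathcal L(\mathbf T)$, hence $\mathbf K(n,\wt\gamma) \subset \mathbf T$ by Lemma~\ref{lem:support-sets}$(ii)$, hence $\mathbf L(\wt\lambda+\wt\gamma) \hookrightarrow \mathbf S/\mathbf T$, and Lemma~\ref{lem:support-sets}$(iii)$ shows $\mathbf S/\mathbf T$ equals the sum of these simples.
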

\begin{proof}
Suppose $\mathbf S/\mathbf T$ is semisimple. Then by Proposition 
\ref{prop:simple-submodules} for every $\wt\gamma \in \mathcal L(\mathbf S/
\mathbf T)$ there exists $n \gg 0$ such that $\mathbf M(n,\wt\gamma) \subset 
\mathbf S$ and $\mathbf K(n, \wt\gamma) \subset \mathbf T $. Lemma 
\ref{lem:ggcurly-M} implies $\mathbf M(m,\wt\nu) \subset \mathbf K(n,
\wt\gamma)$ for some $m > n$, and so $\wt\nu \in \mathcal L(\mathbf T)$. 

Suppose now $\mathbf F$ is a $\lie b$-highest weight module. If $\mathbf F$ is 
$\SS^d\VV$ or $\bigwedge^d \VV$ then by Proposition
\ref{prop:d-exhaustion} every $d$-step $(X_n, X_{n+1})$ satisfies $X_n^- \neq 
\emptyset$. It follows from Proposition \ref{prop:nonzero-sufficient} that
$\map{\gamma}{\nu}{X_n}{X_{n+1}} \neq 0$ implies $\overline \nu \ggcurly 
\overline \gamma$. Then $\mathbf K(n,\wt\gamma) \subset \mathbf T$ whenever 
$\wt\gamma \in \mathcal L(\mathbf S/\mathbf T)$, and $\mathbf S / \mathbf T$ 
has a submodule isomorphic to $\mathbf M(n,\wt\gamma)/ \mathbf K(n,
\wt\gamma) \cong \mathbf L(\wt\lambda + \wt\gamma)$. Therefore,
$\bigoplus_{\wt\gamma \in \mathcal L(\mathbf S/\mathbf T)} \mathbf L(\wt\lambda 
+ \wt\gamma) \subset \mathbf S / \mathbf T$, and by item $(iii)$ or Lemma 
\ref{lem:support-sets} this inclusion is an equality.

If $\mathbf F = \bigwedge^A \VV$ then by Proposition \ref{prop:AB-exhaustion} 
we can assume that each $B$-step $(X_n, X_{n+1})$ satisfies $X_n^- \neq 
\emptyset$, and every $A$-step satisfies $X_n^+ \neq \emptyset$. The reasoning 
is then completely analogous to the previous case, using Propositions 
\ref{prop:nonzero-sufficient} and \ref{prop:nonzero-sufficient-dual}. 
\end{proof}

\begin{lem}
\label{lem:rad-soc}
For every $\ll \in \LL_{\infty}$ we have $F_\ll \mathbf M = \rad F_{\ll+1} \mathbf M$.
Furthermore, if $F_{\ll} \mathbf M/F_{\ll-1}\mathbf M$ is semisimple then it is 
equal to the socle of $\mathbf M / F_{\ll-1}\mathbf M$.
\end{lem}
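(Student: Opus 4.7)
The plan is to deduce both assertions from the structural results already in place, namely Lemma~\ref{lem:support-sets}, Lemma~\ref{lem:ggcurly-M}, and Proposition~\ref{prop:ss-quotients}. The first thing I would establish is a concrete description of the constituents of each layer: $\mathcal L(F_\ll \mathbf M) = \{\wt\nu \in \Pieri{\wt\lambda}{\mathbf F} : [\wt\nu] \leq_\infty \ll\}$. The inclusion $\supseteq$ is immediate because the generator $v_{\wt\lambda} \otimes e_{\wt\gamma}$ of $F_\ll \mathbf M$ with $[\wt\gamma] \leq_\infty \ll$ generates $\mathbf M(n,\wt\gamma)$ for large $n$, which surjects onto $\mathbf L(\wt\lambda+\wt\gamma)$ by Lemma~\ref{lem:MK-quotient}. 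The inclusion $\subseteq$ follows because, by Lemma~\ref{lem:ggcurly-M} and the definition of $\ggcurly$, every constituent of $\mathbf M(n,\wt\gamma)$ has the form $\mathbf L(\wt\lambda+\wt\mu)$ with $\wt\mu \ggcurly \wt\gamma$, forcing $[\wt\mu] \leq_\infty [\wt\gamma] \leq_\infty \ll$.

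With this description in hand, the semisimplicity of $F_{\ll+1} \mathbf M / F_\ll \mathbf M$ follows from Proposition~\ref{prop:ss-quotients}(ii): the required condition is automatic, since $\wt\nu \ggcurly \wt\gamma$ with $\wt\nu \neq \wt\gamma$ and both in $\mathcal L(F_{\ll+1} \mathbf M)$ gives $[\wt\nu] <_\infty [\wt\gamma] \leq_\infty \ll+1$, whence $\wt\nu \in \mathcal L(F_\ll \mathbf M)$. Hence $\rad F_{\ll+1} \mathbf M \subseteq F_\ll \mathbf M$. For the reverse inclusion I would apply Lemma~\ref{lem:support-sets}(ii): it suffices to show that any submodule $\mathbf N \subseteq F_{\ll+1} \mathbf M$ with semisimple quotient contains $\mathcal L(F_\ll \mathbf M)$ in its support. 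If some $\wt\sigma$ with $[\wt\sigma] \leq_\infty \ll$ failed to lie in $\mathcal L(\mathbf N)$, Proposition~\ref{prop:ss-quotients}(i) would force $\wt\sigma \in \mathcal L(\mathbf N)$ provided one exhibits $\wt\tau \in \mathcal L(F_{\ll+1} \mathbf M)$ with $\wt\sigma \ggcurly \wt\tau$ and $\wt\tau \neq \wt\sigma$. Such a $\wt\tau$ can be built from $\wt\sigma$ by a single root-transposition $\wt\tau = \wt\sigma - \epsilon_i + \epsilon_j$ with $i \prec j$ crossing a $\JJ_\infty$-boundary, where the $(\wt\lambda,\mathbf F)$-compatibility of $\preceq$ together with the $\JJ_\infty$-initial hypothesis on $A$ in the Fock case ensures $\wt\tau \in \Pieri{\wt\lambda}{\mathbf F}$ with $[\wt\tau] \leq_\infty \ll+1$.

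For the socle claim, assume $F_\ll \mathbf M / F_{\ll-1} \mathbf M$ is semisimple; the inclusion into $\soc(\mathbf M / F_{\ll-1} \mathbf M)$ is clear. Conversely, let $\mathbf L(\wt\lambda+\wt\nu) \hookrightarrow \mathbf M / F_{\ll-1} \mathbf M$ be a simple submodule with preimage $\mathbf S$, so that $\mathcal L(\mathbf S / F_{\ll-1} \mathbf M) = \{\wt\nu\}$. If $[\wt\nu] >_\infty \ll$, the same combinatorial construction produces $\wt\gamma \neq \wt\nu$ with $\wt\nu \ggcurly \wt\gamma$ and $[\wt\gamma] \geq_\infty \ll$; the linkage description in Theorem~A then forces $\wt\gamma \in \mathcal L(\mathbf S / F_{\ll-1} \mathbf M)$, contradicting simplicity. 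Hence $[\wt\nu] = \ll$, placing the simple submodule inside $F_\ll \mathbf M / F_{\ll-1} \mathbf M$. The principal obstacle throughout is the combinatorial construction of $\wt\tau$ (or $\wt\gamma$) by a single root-transposition across a $\JJ_\infty$-boundary while remaining in $\Pieri{\wt\lambda}{\mathbf F}$; this is where the compatibility of $\preceq$ with $(\wt\lambda,\mathbf F)$ and the hypotheses on $\mathbf F$ become essential.
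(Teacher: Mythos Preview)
Your overall strategy is close to the paper's, but there is a genuine gap in the inclusion $\rad F_{\ll+1}\mathbf M \subseteq F_\ll\mathbf M$. You deduce this from the semisimplicity of the layer $F_{\ll+1}\mathbf M/F_\ll\mathbf M$, which you obtain from Proposition~\ref{prop:ss-quotients}(ii). But that proposition carries the standing hypothesis that $\mathbf F$ is a $\lie b$-highest weight module, and Lemma~\ref{lem:rad-soc} is stated (and used in the proof of Theorem~C for the radical filtration) without that hypothesis: $\LL_\infty$ can have a maximum while $\mathbf F$ fails to be highest weight. The paper avoids this issue entirely: instead of proving the layer is semisimple, it shows directly that for each $\wt\gamma$ in the class $\ll+1$ there is a surjection $F_{\ll+1}\mathbf M \to \mathbf L(\wt\lambda+\wt\gamma)$, whence $\mathcal L(\rad F_{\ll+1}\mathbf M)\subset\mathcal L(F_\ll\mathbf M)$ and Lemma~\ref{lem:support-sets}(ii) finishes. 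That argument only needs Proposition~\ref{prop:ss-quotients}(i), which has no highest-weight hypothesis.

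A smaller point: your justification of the inclusion $\mathcal L(F_\ll\mathbf M)\subseteq\{\wt\nu:[\wt\nu]\leq_\infty\ll\}$ cites Lemma~\ref{lem:ggcurly-M}, but that lemma goes the other way (it produces submodule containments \emph{from} the relation $\ggcurly$, not the converse). What you actually need is that a nonzero transition map $\phi(X_n,X_m,\wt\gamma_{X_n},\nu)$ forces $[\overline\nu]\leq_\infty[\wt\gamma]$; this is the content of Corollaries~\ref{cor:d-nonzero-maps} and~\ref{cor:AB-nonzero-maps}. For the remaining inclusion $F_\ll\mathbf M\subset\rad F_{\ll+1}\mathbf M$ and for the socle statement, your approach via Proposition~\ref{prop:ss-quotients}(i) together with the ``single root-transposition across a $\JJ_\infty$-boundary'' is essentially what the paper does (the paper leaves that combinatorial step implicit), so those parts are fine.
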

\begin{proof}
Let $\mathbf L$ be a simple $\gl(\infty)$-module and let $\pi:F_{\ll+1} \mathbf M 
\to \mathbf L$ be a nonzero morphism. Then $\mathbf L \cong \mathbf 
L(\wt\lambda + \wt\gamma)$ for some $\wt\gamma \in \Pieri{\wt\lambda}{\mathbf 
F}$ by Proposition \ref{prop:simple-submodules}. Furthermore $\{\wt\gamma\} = 
\mathcal L(F_{\ll+1} \mathbf M) \setminus \mathcal L(\ker \pi)$, and by item 
$(i)$ of Proposition \ref{prop:ss-quotients} it follows that the $\LL_\infty$ 
class of $\wt\gamma$ is $d+1$. Consequently $F_{\ll} \mathbf M \subset \ker 
\pi$ and, since $\pi$ was arbitrary, that $F_\ll \mathbf M \subset \rad 
F_{\ll+1}\mathbf M$. On the other hand for every $\wt\gamma \in d+1$ we have a 
map $F_{\ll+1} \mathbf M \to \mathbf L(\wt\lambda + \wt\gamma)$, which implies 
that $\mathcal L(\rad F_{\ll+1} \mathbf M) \subset \mathcal L(F_{\ll}\mathbf 
M)$. It follows from item $(ii)$ of Lemma \ref{lem:support-sets} that $\rad 
F_{\ll+1} \mathbf M \subset F_\ll \mathbf M$, and hence these submodules are 
equal.

If $F_{\ll}\mathbf M/F_{\ll-1}\mathbf M$ is semisimple then it is a submodule 
of $\soc \mathbf M / F_{\ll-1}\mathbf M$. On the other hand, if $\mathbf 
L(\wt\lambda + \wt\gamma)$ is a submodule of $\mathbf M / F_{\ll-1}\mathbf M$ 
then every $\wt\nu \ggcurly \wt\gamma$ must be of $\LL_\infty$-class strictly 
less than $d$ thanks to item $(i)$ of Proposition \ref{prop:ss-quotients}. Thus 
the $\LL_\infty$ class of $\wt\gamma$ is $d$ and $\mathbf L(\wt\lambda + 
\wt\gamma) \subset F_{\ll}\mathbf M/F_{\ll-1}\mathbf M$.
\end{proof}

\subsubsection{The non-example revisited}
\label{ex:non-example-bis}
We go back to the non-example \ref{ex:non-ex}.
Taking $\mathbf K = \sum_{k = 1}^\infty \mathbf K(2k+1, \mu_{2k + 1})$ and 
$\mathbf N = \sum_{k = 1}^\infty \mathbf M(2k+1, \mu_{2k + 1})$, the quotient 
$\mathbf N/\mathbf K$ is isomorphic to $\mathbf L(\wt\lambda + \epsilon_B)$. 
Since $\epsilon_B \notin \supp \mathbf F$, both the separation 
Lemma \ref{lem:mk-separation} and Proposition \ref{prop:simple-submodules}
fail.

\subsection{Proofs of theorems A through D}
\label{ss:proofs}
\begin{proof}[Proof of Theorem A]
Proposition \ref{prop:simple-submodules} implies that every simple constituent 
of $\mathbf M$ is of the form $\mathbf L(\wt\lambda + \wt\gamma)$ with 
$\wt\gamma \in \Pieri{\wt\lambda}{\mathbf F}$, and that $\mathbf M$ is 
multiplicity free follows from item $(iii)$ of Lemma \ref{lem:support-sets}. 

By Lemma \ref{lem:Pieri-varia} the poset 
$\Pieri{\wt\lambda}{\mathbf F}$ is locally finite, and by \cite{FM12}*{Theorem 
1.3} its order can be extended to a total order $\leq$ such that $(\Pieri{
\wt\lambda}{\mathbf F}, \leq)$ is isomorphic to a subset of $\ZZ$.
Let $G_{\wt\gamma} \mathbf M$ be the submodule of $\mathbf M$ generated by $\{
v_{\wt\lambda} \otimes e_{\wt\nu} \mid \wt\nu \geq \wt\gamma$. Since 
$v_{\wt\lambda_n} \otimes e_{\wt\gamma_{X_n}} \in G_{\wt\gamma} \mathbf M$
for all $n \gg 0$ it follows that $\mathbf M(n,\wt\gamma) \subset G_{\wt\gamma} 
\mathbf M$, and since $v_{\wt\lambda_n} \otimes e_{\wt\gamma_{X_n}} \notin 
G_{\wt\gamma-1} \mathbf M$ it follows that $\mathcal L(G_{\wt\gamma} \mathbf 
M/G_{\wt\gamma-1} \mathbf M) = \{\wt\gamma\}$, so $\{G_{\wt\gamma} \mathbf M
\}_{\wt\gamma \in \Pieri{\wt\lambda}{\mathbf F}}$ is a composition series
of $\mathbf M$.

If $\wt\nu \ggcurly \wt\gamma$ then $\mathbf L(\wt\lambda + \wt\nu)$ is linked
to $\mathbf L(\wt\lambda + \wt\gamma)$ by Lemma \ref{lem:ggcurly-M}. Suppose 
now that $\mathbf L(\wt\lambda+\wt\nu)$ is linked to $\mathbf L(\wt\lambda+
\wt\gamma)$, and choose $n \gg 0$ such that $\wt\gamma_i = \wt\nu_i$ for all 
$i \in \II \setminus X_n$. Then $[\mathbf M(n, \wt\lambda): \mathbf 
L(\wt\lambda + \wt\nu)] \neq 0$, so by Proposition \ref{prop:simple-submodules} 
there exists $m \geq n$ such that $\mathbf M(m,\wt\nu) \subset \mathbf 
M(n,\wt\gamma)$, and hence $\map{\wt\gamma_{X_n}}{\wt\nu_{X_m}}{n}{m} \neq 0$.
The choice of $n$ plus Corollaries \ref{cor:d-nonzero-maps} and 
\ref{cor:AB-nonzero-maps} imply that $\wt\nu - \wt\gamma = \wt\mu$ with
$\wt\mu \ggcurly 0$, so $\wt\nu \ggcurly \wt\gamma$.
\end{proof}

\begin{proof}[Proof of Corollary A]
It is an immediate corollary of Theorem A that the length of $\mathbf M$ is 
equal to the cardinality of $\Pieri{\wt\lambda}{\mathbf F}$. As we have seen in 
Lemma \ref{lem:Pieri-varia}, if $\mathbf F = \SS^d \VV$ or $\mathbf F = 
\bigwedge^d \VV$, we have an embedding $\Pieri{\wt\lambda}{\mathbf F} \to 
\JJ^d$, and so $\Pieri{\wt\lambda}{\mathbf F}$ is finite whenever $\JJ$ is 
finite. On the other hand, if $\JJ$ is infinite then there are infinitely many 
sequences $(j_1, \ldots, j_d)$ with $j_k \in \JJ$ and $j_k \prec j_{k+1}$ for 
all $k \in \interval{d-1}$. Taking $i_k$ to be the first element of the class 
$j_k$ and $I = (i_1, \ldots, i_d)$ we have $\epsilon_I \in \Pieri{\wt\lambda}
{\mathbf F}$, and hence $\Pieri{\wt\lambda}{\mathbf F}$ is infinite.

Suppose now that $\mathbf F = \bigwedge^A \VV$ and $\mathbf M$ has finite 
length. We claim that $\JJ$ must be a finite set. Let $I \subset A$ be any 
subset such that $I \cap A \cap j$ is a terminal subset of $A \cap j$ for every
$j \in \JJ$. If $\JJ$ is infinite then either $B \cap j$ is infinite for some 
infinite class $j \in \JJ$, in which case we set $J$ to be an initial subset of 
$B \cap j$, or there is an infinite family of classes $\{j_k\}_{k \geq 1} 
\subset \JJ$ such that $B \cap j_k \neq \emptyset$, and we let $J$ be the 
sequence formed by the first elements of $B\cap j_1, B\cap j_2, \ldots, B \cap 
j_{\#I}$. In either case $\epsilon_A - \epsilon_I + \epsilon_J \in 
\Pieri{\wt\lambda}{\mathbf F}$, and since there are infinitely many choices for 
$I$ the set $\Pieri{\wt\lambda}{\mathbf F}$ is infinite, a contradiction. Thus 
if $\mathbf M$ has finite length then $\JJ$ must be finite. The case where $\JJ$
is finite is analyzed in Example \ref{ex:modules_with_finite_jj_}, where we 
showed that $\mathbf M$ has finite length if and only if $\mathbf L(\wt\lambda)
\cong \mathbf D_n \otimes \mathbf V^{\lambda,\mu}$.
\end{proof}

\begin{proof}[Proof of Theorem B]
Suppose $\mathbf F$ is a $\lie b$-highest weight module. Then $\wt\mu \in 
\Pieri{\wt\lambda}{\mathbf F}$ and its $\LL_\infty$-class $d$ is the minimal
element of $\LL_\infty$. It follows that the linkage filtration has a minimal
layer, and by item $(ii)$ of Proposition \ref{prop:ss-quotients} its layers
are semisimple. A simple induction using Lemma \ref{lem:rad-soc} now shows
that the linkage filtration and the socle filtration coincide. In particular,
$\mathbf M$ is semisimple if and only if the linkage filtration has length one, 
i.e., if and only if $\LL_\infty$ is trivial. This proves item $(i)$.

We now prove item $(ii)$. Suppose $\mathbf M = \mathbf S + \mathbf T$, with 
$\mathbf S$ and $\mathbf T$ nontrivial submodules. Again the hypothesis that 
$\mathbf F$ has a highest weight implies that $\mathbf M$ has a 
nontrivial socle, and hence so do $\mathbf S$ and $\mathbf T$. Let $\wt\gamma
\in \mathcal L(\soc \mathbf S)$ and $\wt\nu \in \mathcal L(\soc \mathbf T)$,
and let $I, I'$ be sequences of minimal length such that $\wt\nu - \wt\gamma =
\epsilon_I - \epsilon_{I'}$. We will prove that $\mathbf S \cap \mathbf T \neq 
\{0\}$ by induction on the length $r$ of $I$ and $I'$.

If $r = 0$ then by definition $\mathbf S \cap \mathbf T$ contains the unique 
submodule of $\soc \mathbf M$ isomorphic to $\mathbf L(\wt\lambda + \wt\gamma)$,
and in particular $\mathbf S \cap \mathbf T$ is nonzero. If $r = 1$ then $\wt\nu - 
\wt\gamma = 
\epsilon_i - \epsilon_{i'}$ and without loss of generality we can assume $i 
\prec i'$. Since $\LL_\infty$ has at least two elements the same holds for 
$\JJ_\infty$, and hence there exists an infinite class $j \in \JJ$ such that 
its minimal element $s(j)$ is strictly larger than any element of $\supp 
\wt\gamma \cup \supp \wt\nu$. By construction $\wt\sigma = \wt\gamma - 
\epsilon_{i'} + \epsilon_{s(j)} = \wt\nu - \epsilon_i + \epsilon_{s(j)}$ 
belongs to $\Pieri{\wt\lambda}{\mathbf F}$, and $\wt\gamma, \wt\nu \ggcurly 
\wt\sigma$. As $\Pieri{\wt\lambda}{\mathbf F} = \mathcal L(\mathbf S) \cup 
\mathcal L(\mathbf T)$, we can assume without loss of generality $\wt\sigma
\in \mathcal L(\mathbf S)$. Item $(i)$ of Proposition \ref{lem:support-sets} 
then implies that $\wt\gamma, \wt\nu \in \mathcal L(\mathbf S)$ and so $\mathbf 
L(\wt\lambda + \wt\nu) \subset \mathbf S \cap \mathbf T$.

To proceed with the induction, we assume $r > 1$ and let $i = \max I, i' = \max 
I'$. Set $\wt\sigma = \wt\nu - \epsilon_{i'} + \epsilon_i$. Again $\wt\sigma 
\in \mathcal L(\mathbf S) \cup \mathcal L(\mathbf T)$. If $\wt\sigma \in 
\mathcal L(\mathbf S)$ we replace $\wt\gamma$ by $\wt\sigma$ and conclude 
from the case $r = 1$ that $\mathbf S \cap \mathbf T \neq \{0\}$. If $\wt\sigma 
\in \mathcal L(\mathbf T)$ we replace $\wt\nu$ by $\wt\sigma$ and the 
result now follows from the induction hypothesis. This completes the proof of 
item $(ii)$.

To prove item $(iii)$, let $\mathbf S, \mathbf T \subset \mathbf M$ be arbitrary
nontrivial submodules and let $\wt\gamma \in \mathcal L(\mathbf S), \wt\nu \in 
\mathcal L(\mathbf T)$. Again take sequences $I,I'$ of minimal length $r$ such 
that $\wt\nu - \wt\gamma = \epsilon_I - \epsilon_{I'}$.
For $n \gg 0$ we have $I, I' \subset X_n$, and
\begin{align*}
  L_n(\wt\lambda_{X_n} + \wt\gamma_{X_n}) &\subset \mathbf S,
    &   L_n(\wt\lambda_{X_n} + \wt\nu_{X_n}) &\subset \mathbf T.
\end{align*}
The hypothesis that $\mathbf F$ is not a $\lie b$-highest weight module implies 
that there are infinitely many $n$ such that $X_{n+1} = Z_n \sqcup X_n$ with 
$Z_n \cap \supp \wt\gamma = Z_n \cap \supp \wt\nu = \emptyset$. We fix such an 
$n$ and let $i = \max I, i' = \max I'$ and $z = \min Z_n$. Item $(ii)$ of 
Proposition \ref{prop:d-nonzero} implies 
\begin{align*}
  L_{n+1}(\wt\lambda_{X_{n+1}} + \wt\gamma_{X_{n+1}} - \epsilon_i + \epsilon_z) 
    &\subset \mathbf S,
    &   L_{n+1}(\wt\lambda_{X_{n+1}} + \wt\nu_{X_{n+1}} - \epsilon_{i'} + 
      \epsilon_z) 
    &\subset \mathbf T,
\end{align*}
so $\wt\gamma_1 = \wt\gamma - \epsilon_i + \epsilon_z \in \mathcal L(\mathbf 
S)$ and $\wt\nu_1 = \wt\nu - \epsilon_i + \epsilon_z \in \mathcal L(\mathbf T)$.
By construction $\wt\nu_1 - \wt\gamma_1 = \epsilon_{I_1} - \epsilon_{I'_1}$ 
with $I_1, I'_1$ of length $r-1$. Proceeding recursively we can find 
$\wt\gamma_k \in \mathcal L(\mathbf S)$ and $\wt\nu_k \in \mathcal L(\mathbf 
T)$ such that $\wt\nu_k - \wt\gamma_k = \epsilon_{I_k} - \epsilon_{I'_k}$ with 
the length of $I_k, I'_k$ equal to $r - k$. After $r$ steps we reach 
$\wt\gamma_r = \wt\nu_r \in \mathcal L(\mathbf S) \cap \mathcal L(\mathbf T) = 
\mathcal L(\mathbf S \cap \mathbf T)$. Thus $\mathbf S \cap \mathbf T$ is 
nonzero. 

In the course of the proof we showed that for every submodule $\mathbf S \subset
\mathbf M$ with $L_n(\wt\lambda_{X_n} + \wt\gamma_{X_n}) \subset \mathbf S$ we 
also have $L_{n+1}(\wt\lambda_{X_{n+1}} + \wt\gamma_{X_{n+1}} - \epsilon_i + 
\epsilon_z) \subset \mathbf S$. This implies that $\mathbf K(n+1,
\wt\gamma_{X_{n+1}} - \epsilon_i + \epsilon_z)$ is a strict submodule of 
$\mathbf S$. Thus every submodule of $\mathbf M$ has a nontrivial submodule, 
and hence $\mathbf M$ has no socle.
\end{proof}

\begin{proof}[Proof of Theorem C]
If $\mathbf F$ is a $\lie b$-highest weight module then the socle filtration 
and the linkage filtration of $\mathbf M$ coincide by Theorem B, and since the 
linkage filtration is exhaustive so is the socle filtration. On the other hand, 
if $\mathbf F$ is not a $\lie b$-highest weight module then $\soc \mathbf M = 
\{0\}$ by item $(iii)$ of Theorem B. This proves item $(i)$. Notice in 
particular that $\LL_\infty$ has a minimum if and only if $\mathbf F$ is a 
$\lie b$-highest weight module.

If $\LL_\infty$ has a maximum $\ll$, Lemma \ref{lem:rad-soc} implies $F_{\ll-1} 
\mathbf M = \rad \mathbf M$. On the other hand, if $\rad \mathbf M \subset 
\mathbf M$ then the $\LL_\infty$ class of any $\wt\gamma \in \mathcal L(\mathbf 
M/\rad \mathbf M)$ is the maximum of $\LL_\infty$. Thus $\rad \mathbf M$ is a 
strict submodule of $\mathbf M$ if and only if $\LL_\infty$ has a maximum. 
Lemma \ref{lem:rad-soc} plus descending induction show that the linkage 
filtration and the radical filtration coincide, and since the linkage 
filtration is separated, so is the radical filtration. This proves item $(ii)$.
Item $(iii)$ follows from $(i)$ and $(ii)$. 
\end{proof}

\begin{proof}[Proof of Theorem D]
The proof follows immediately from the fact that $-_*$ is an autoequivalence
of the tensor category of integrable $\gl(\infty)$-modules. In particular note 
that $\mathbf L(\wt\lambda) \otimes \mathbf F_* \cong (\mathbf L(-\wt\lambda) 
\otimes \mathbf F)_*$, and that $\mathbf F_*$ is a highest weight module with 
respect to a Borel subalgebra arising from a $(\wt\lambda,\mathbf 
F_*)$-compatible order if and only if $\mathbf F$ is a highest weight module 
with respect to a Borel subalgebra arising from a $(-\wt\lambda,\mathbf 
F)$-compatible order.
\end{proof}

\begin{bibdiv}
\begin{biblist}
\bib{ASSI}{book}{
  author={Assem, Ibrahim},
  author={Simson, Daniel},
  author={Skowro\'nski, Andrzej},
  title={Elements of the representation theory of associative algebras. Vol. 1},
  series={London Mathematical Society Student Texts},
  volume={65},
  note={Techniques of representation theory},
  publisher={Cambridge University Press, Cambridge},
  date={2006},
  pages={x+458},
  isbn={978-0-521-58423-4},
  isbn={978-0-521-58631-3},
  isbn={0-521-58631-3},
  review={\MR {2197389}},
  doi={10.1017/CBO9780511614309},
}

\bib{FM12}{article}{
  author={Frittaion, Emanuele},
  author={Marcone, Alberto},
  title={Linear extensions of partial orders and reverse mathematics},
  journal={MLQ Math. Log. Q.},
  volume={58},
  date={2012},
  number={6},
  pages={417--423},
  issn={0942-5616},
  review={\MR {2997030}},
  doi={10.1002/malq.201200025},
}

\bib{Fulton97}{book}{
  author={Fulton, William},
  title={Young tableaux},
  series={London Mathematical Society Student Texts},
  volume={35},
  note={With applications to representation theory and geometry},
  publisher={Cambridge University Press, Cambridge},
  date={1997},
  pages={x+260},
  isbn={0-521-56144-2},
  isbn={0-521-56724-6},
  review={\MR {1464693}},
}

\bib{FH91}{book}{
  author={Fulton, William},
  author={Harris, Joe},
  title={Representation theory},
  series={Graduate Texts in Mathematics},
  volume={129},
  note={A first course; Readings in Mathematics},
  publisher={Springer-Verlag, New York},
  date={1991},
  pages={xvi+551},
}

\bib{GP20}{article}{
  author={Grantcharov, Dimitar},
  author={Penkov, Ivan},
  title={Simple bounded weight modules of $\germ {sl}(\infty )$, $\germ {o}(\infty )$, $\germ {sp}(\infty )$},
  journal={Transform. Groups},
  volume={25},
  date={2020},
  number={4},
  pages={1125--1160},
  issn={1083-4362},
  doi={10.1007/s00031-020-09571-7},
}

\bib{HP22}{book}{
  author={Penkov, Ivan},
  author={Hoyt, Crystal},
  title={Classical Lie algebras at infinity},
  series={Springer Monographs in Mathematics},
  publisher={Springer, Cham},
  date={2022},
  pages={xiii+239},
}

\bib{Mackey45}{article}{
  author={Mackey, George W.},
  title={On infinite-dimensional linear spaces},
  journal={Trans. Amer. Math. Soc.},
  volume={57},
  date={1945},
  pages={155--207},
}

\bib{PS11b}{article}{
  author={Penkov, Ivan},
  author={Styrkas, Konstantin},
  title={Tensor representations of classical locally finite Lie algebras},
  conference={ title={Developments and trends in infinite-dimensional Lie theory}, },
  book={ series={Progr. Math.}, volume={288}, publisher={Birkh\"{a}user Boston, Inc., Boston, MA}, },
  date={2011},
  pages={127--150},
}

\bib{Serganova21}{article}{
  author={Serganova, Vera},
  title={Tensor product of the Fock representation with its dual and the Deligne category},
  conference={ title={Representation theory, mathematical physics, and integrable systems}, },
  book={ series={Progr. Math.}, volume={340}, publisher={Birkhauser/Springer, Cham}, },
  date={2021},
  pages={569--584},
  doi={10.1007/978-3-030-78148-4-19},
}

\bib{StanleyI}{book}{
  author={Stanley, Richard P.},
  title={Enumerative combinatorics. Volume 1},
  series={Cambridge Studies in Advanced Mathematics},
  volume={49},
  edition={2},
  publisher={Cambridge University Press, Cambridge},
  date={2012},
  pages={xiv+626},
  isbn={978-1-107-60262-5},
  review={\MR {2868112}},
}

\bib{Zadunaisky22}{article}{
  author={Zadunaisky, Pablo},
  title={Highest weight categories of $gl(\infty )$-modules},
  note={Preprint, available at https://arxiv.org/abs/2205.04874},
  date={2022},
}

\bib{Zhelobenko73}{book}{
  author={Zhelobenko, D.P.},
  title={Compact Lie groups and their representations},
  series={Translations of Mathematical Monographs},
  volume={40},
  publisher={American Mathematical Society},
  place={Providence, R.I.},
  date={1973},
  pages={viii+448},
}
\end{biblist}
\end{bibdiv}

\end{document}